\documentclass[11pt]{amsart}

\usepackage[margin=1.15in]{geometry}
\usepackage{amsmath,amssymb,amsfonts,amsthm}
\DeclareMathOperator{\pd}{pd}
\DeclareMathOperator{\depth}{depth}
\usepackage{mathtools}
\usepackage{mathrsfs}
\usepackage{enumitem}
\usepackage{tikz-cd}
\usepackage[colorlinks=true,citecolor=blue,linkcolor=blue,urlcolor=blue]{hyperref}

\newcommand{\cP}{\mathcal{P}}
\DeclareMathOperator{\Tor}{Tor}
\DeclareMathOperator{\Unip}{Unip}

\newcommand{\cU}{\mathcal{U}}
\newcommand{\FF}{\mathcal{F}}

\newcommand{\cO}{\mathcal{O}}

\newcommand{\OA}{\mathcal{O}_A}
\newcommand{\wA}{\widehat{A}}

\newtheorem{theorem}{Theorem}[section]

\newtheorem{proposition}[theorem]{Proposition}
\newtheorem{lemma}[theorem]{Lemma}
\newtheorem{corollary}[theorem]{Corollary}

\newtheorem{claim}[theorem]{Claim}

\theoremstyle{definition}
\newtheorem{definition}[theorem]{Definition}

\theoremstyle{remark}
\newtheorem{remark}[theorem]{Remark}

\newcommand{\OO}{\mathcal O}
\newcommand{\Pic}{\operatorname{Pic}}

\newcommand{\NS}{\operatorname{NS}}

\newcommand{\Ext}{\operatorname{Ext}}

\newcommand{\rk}{\operatorname{rk}}

\newcommand{\Coh}{\operatorname{Coh}}

\title[Existence of ACM Bundles on Abelian Varieties]
{Existence of ACM Bundles on Polarized Abelian Varieties}

\author{Soham Mondal, Pabitra Barik}

\address{}

\email{}

\date{\today}

\subjclass[2020]{14J60, 14F08}

\keywords{Abelian varieties, ACM sheaves, Ulrich bundles, Fourier--Mukai transform, Picard variety, Cohomological vanishing}

\begin{document}

\begin{abstract}
This article settles the existence problem for arithmetically Cohen-Macaulay (ACM) bundles on polarized abelian varieties of dimension 2 or higher. We prove that every non-trivial, algebraically trivial line bundle is ACM, construct explicit infinite families of indecomposable ACM vector bundles in every higher rank, and establish that for dimension two or greater, the category of ACM bundles is of wild representation type.
\end{abstract}

\maketitle

\section{Introduction}

An important theme in algebraic geometry is to understand a projective variety through the vector bundles it carries. Among these, two closely related classes have received a great deal of attention: arithmetically Cohen--Macaulay (ACM) bundles and Ulrich bundles. If $(A,L)$ is a polarized projective variety, a vector bundle $E$ is ACM with respect to $L$ if
\[
H^i(A,E\otimes L^{\otimes m})=0
\quad\text{for all } m\in \mathbb Z \text{ and } 0<i<\dim A.
\]
Ulrich bundles are the ACM bundles with the strongest possible cohomological behavior, and they often serve as the most rigid and useful objects in this circle of ideas.

These bundles matter for several reasons. In commutative algebra, ACM bundles correspond to maximal Cohen--Macaulay modules over homogeneous coordinate rings, so they reflect subtle properties of singularities, resolutions, and syzygies; see \cite{EisenbudSyzygies}. In algebraic geometry, Ulrich bundles became especially prominent through the work of Eisenbud, Schreyer, and Weyman, who showed that they give determinantal and Pfaffian expressions for Chow forms; see \cite{EisenbudSchreyer}. Beauville's papers and survey give a clear picture of the importance of Ulrich bundles and of the main existence problems around them; see \cite{BeauvilleUlrich,BeauvilleSurvey}.

Two broad questions organize much of the subject. The first is the existence problem: given a polarized projective variety, does it carry an ACM bundle or an Ulrich bundle? The second is the classification problem: if such bundles exist, how many indecomposable ones are there, and how complicated are their families? A useful language for the second question comes from representation theory. Following the now standard viewpoint, one studies the ACM representation type of a polarized variety, which may be finite, tame, or wild depending on the complexity of its indecomposable ACM bundles; see, for example, \cite{FaenziPonsLlopis}. This perspective has proved effective in measuring how complicated the geometry of ACM bundles really is.

Much of the existing literature concerns varieties whose geometry already favors cohomological vanishing, such as projective spaces, quadrics, Grassmannians, and several classes of Fano-type varieties. Abelian varieties lie in a very different direction. If $A$ is an abelian variety of dimension $g\ge 2$, then
\[
H^i(A,\mathcal O_A)\neq 0
\quad\text{for every }0\le i\le g,
\]
so the structure sheaf itself is not ACM. This makes abelian varieties a delicate testing ground for the theory. In dimension two, Beauville constructed rank-two Ulrich bundles on abelian surfaces; see \cite{BeauvilleUlrich}. Beyond such cases, however, the existence and structure of ACM bundles on polarized abelian varieties have remained much less transparent.

The starting point of this paper is an acyclicity statement with strong
consequences. Let $(A,L)$ be a polarized abelian variety over an
algebraically closed field of characteristic zero, and let
\[
P \in \operatorname{Pic}^{0}(A)\setminus\{\mathcal{O}_{A}\}.
\]
Using the Fourier--Mukai transform, we prove that $P$ is acyclic:
\[
H^{i}(A,P)=0
\qquad
\text{for all } 0\leq i\leq \dim A.
\]
Notice that $P$ is a degenerate line bundle, so this vanishing is not an
application of the index theorem for nondegenerate line bundles. The
Fourier--Mukai calculation treats the untwisted case. For $m>0$, the line
bundle $P\otimes L^{\otimes m}$ is ample and therefore has no higher
cohomology. For $m<0$, the required intermediate-cohomology vanishing
follows from Serre duality and the triviality of the canonical bundle of
$A$. Combining these three cases, we prove that every nontrivial element
of $\operatorname{Pic}^{0}(A)$ is ACM with respect to every ample
polarization.

In fact, we determine the full cohomology table of $P\otimes L^{\otimes m}$ for all integers $m$. As consequences, we show that $\mathcal O_A$ is the unique non-ACM element of $\operatorname{Pic}^{0}(A)$ when $g\ge 2$, that $\operatorname{Pic}^{0}(A)\setminus\{\mathcal O_A\}$ gives a natural $g$-dimensional family of rank-one ACM line bundles, and that none of these line bundles is Ulrich, except in dimension one after one positive twist. We also show that the ACM complexity of $(A,L)$ is equal to one.

The rank-one case is only the beginning. Fix a nontrivial line bundle $P\in \operatorname{Pic}^{0}(A)$. For every $g\ge 2$ and every rank $r\ge 1$, we construct an indecomposable ACM vector bundle $E_r$ of rank $r$ by iterated non-split extensions starting from $P$. The main difficulty is to prove that suitable nontrivial extension classes exist at every step and that the resulting bundles remain indecomposable. To do this, we use Mukai's Fourier--Mukai transform \cite{MukaiDuality,MukaiSymmetry} and relate the extension problem to finite-length modules over a regular local ring.

This viewpoint also leads to a representation-theoretic consequence. Using the Fourier--Mukai correspondence, we identify unipotent bundles on an abelian variety with finite-length modules over a regular complete local ring. From this, we deduce that for abelian varieties of dimension at least two, the category of ACM bundles is of wild representation type. Thus polarized abelian varieties support not only explicit ACM bundles in every rank, but also a maximally complicated ACM theory from the point of view of classification. This places abelian varieties of dimension two or greater firmly on the wild side of the representation spectrum, contrasting sharply with the tame behavior observed in certain varieties of minimal degree, such as quartic scrolls \cite{faenzi2017surfaces}.

Taken together, our results show that polarized abelian varieties carry abundant ACM bundles despite the fact that the structure sheaf itself fails to be ACM in dimension at least two. They provide explicit families of indecomposable ACM bundles in every rank, clarify the behaviour of algebraically trivial line bundles with respect to a polarization, and place abelian varieties firmly on the wild side of the ACM representation-type spectrum.

Finally, we connect our results to the Pareschi--Popa framework \cite{PareschiPopa} of  ``M-regularity", which measures the positivity of sheaves on abelian varieties. We demonstrate that while M-regularity provides a reliable pathway to construct ACM bundles, the reverse is not true: being an ACM bundle does not automatically guarantee that it is M-regular in the Pareschi--Popa sense.

\section{Preliminaries}
\label{sec:preliminaries}

Throughout, \(k\) is an algebraically closed field. Unless otherwise specified, we assume \(\operatorname{char}k=0\). All varieties are integral and projective over \(k\).

\begin{definition}
An abelian variety over \(k\) is a complete connected algebraic group over \(k\). If \(A\) is an abelian variety, its dual abelian variety is
\[
\widehat A=\operatorname{Pic}^{0}(A).
\]
A polarization on \(A\) is the numerical class of an ample line bundle. In this article, by a polarized abelian variety we mean a pair \((A,L)\), where \(L\) is an ample line bundle on \(A\).
\end{definition}

For \(a\in A(k)\), let \(t_a:A\to A\) denote translation by \(a\).

\begin{definition}
The subgroup \(\operatorname{Pic}^{0}(A)\subseteq \Pic(A)\) consists of line bundles algebraically equivalent to \(\OO_A\). Equivalently, it is the connected component of the identity of the Picard scheme. Every element of \(\operatorname{Pic}^{0}(A)\) is algebraically trivial.
\end{definition}
\begin{remark}
On $A$ the group $\Pic^0(A)$, the connected component of the identity in
$\Pic(A)$, equals the group of algebraically trivial line bundles. Since
algebraic equivalence implies numerical equivalence, every such bundle is
numerically trivial; when $\NS(A)$ is torsion-free the three notions
coincide, and we work in this setting throughout.
\end{remark}

\begin{lemma}
\label{lem:pic0-numerically-trivial}
If \(P\in \operatorname{Pic}^{0}(A)\), then \(c_1(P)=0\) in \(\mathrm{NS}(A)\). Consequently,

\[
c_1(P\otimes M)=c_1(M)
\]
for every line bundle \(M\) on \(A\).
\end{lemma}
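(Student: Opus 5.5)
The plan is to work directly with the definition of the Néron--Severi group, $\NS(A)=\Pic(A)/\Pic^{0}(A)$, where $c_1\colon \Pic(A)\to \NS(A)$ is the quotient map sending a line bundle to its algebraic equivalence class. The first assertion is then essentially tautological. If $P\in\Pic^{0}(A)$, then $P$ lies in the kernel of the quotient map, so $c_1(P)=0$ in $\NS(A)$. To make this independent of the chosen model of $\NS(A)$, I will also recall the alternative description. Here $P$ is algebraically equivalent to $\OO_A$: there is a connected parameter variety $T$, a line bundle $\mathcal{P}$ on $A\times T$, and points $t_0,t_1\in T$ with $\mathcal{P}_{t_0}\cong\OO_A$ and $\mathcal{P}_{t_1}\cong P$. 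This is the Poincaré bundle on $A\times\widehat A$ restricted over a connected curve through $0$ and $P$. The class $c_1(\mathcal{P}_t)$, viewed in $\NS(A)$ or in $H^2$ (via the cycle class/first Chern class), is locally constant in $t$ and hence constant on connected $T$. So $c_1(P)=c_1(\OO_A)=0$.

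For the second assertion I will use that $c_1\colon\Pic(A)\to\NS(A)$ is a group homomorphism. This holds because it is a quotient map of groups, or alternatively because the first Chern class is additive on tensor products of line bundles. Hence $c_1(P\otimes M)=c_1(P)+c_1(M)=c_1(M)$.

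There is no real obstacle here. The only point requiring care is consistency with the paper's preceding remark about numerical versus algebraic triviality. The statement concerns $\NS(A)$, that is, algebraic equivalence, so the torsion-freeness hypothesis is not needed for this lemma. I will note in passing that the same argument gives numerical triviality: $P\cdot C=0$ for every curve $C$. This follows because the degree of $\mathcal{P}_t|_C$ is constant in $t$.
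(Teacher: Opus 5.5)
Your proposal is correct and follows the paper's proof: both note that $c_1(P)=0$ because $P$ lies in the kernel of the quotient map $\Pic(A)\to\NS(A)=\Pic(A)/\Pic^{0}(A)$, and both obtain $c_1(P\otimes M)=c_1(M)$ from additivity of $c_1$. Your extra local-constancy argument and your remark that torsion-freeness of $\NS(A)$ is not needed here are both accurate, but the paper does not need them; its own aside that algebraic equivalence implies numerical equivalence is likewise not needed for the $\NS$ statement.
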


\begin{proof}
By definition, \(\operatorname{Pic}^{0}(A)\) is the subgroup of line bundles algebraically equivalent to zero. Algebraic equivalence implies numerical equivalence. Therefore the class of \(P\) in the Néron--Severi group

\[
\mathrm{NS}(A)=\Pic(A)/\operatorname{Pic}^{0}(A)
\]
is zero. Hence \(c_1(P)=0\) in \(\mathrm{NS}(A)\). The formula for \(c_1(P\otimes M)\) follows from additivity of the first Chern class.
\end{proof}

\begin{lemma}
\label{lem:pic0-preserves-ampleness}
Let \(L\) be ample on \(A\), and let \(P\in \operatorname{Pic}^{0}(A)\). Then \(L\otimes P\) is ample.
\end{lemma}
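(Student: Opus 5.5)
Since ampleness of a line bundle on a projective variety depends only on its numerical class, the plan is to reduce the statement to Lemma~\ref{lem:pic0-numerically-trivial}. The key input is the Nakai--Moishezon criterion (equivalently Kleiman's criterion): a line bundle $M$ on the projective variety $A$ is ample if and only if $(M^{\dim V}\cdot V)>0$ for every closed integral subvariety $V\subseteq A$ of positive dimension. These intersection numbers depend only on the numerical class of $M$.

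The steps, in order, are as follows. First, by Lemma~\ref{lem:pic0-numerically-trivial}, $c_1(L\otimes P)=c_1(L)$ in $\NS(A)$, so $L\otimes P$ and $L$ are algebraically equivalent, hence numerically equivalent. Second, for every positive-dimensional integral $V\subseteq A$ of dimension $d$, multilinearity of intersection numbers gives
\[
((L\otimes P)^{d}\cdot V)=(L^{d}\cdot V)+\sum_{j\ge 1}\binom{d}{j}(L^{d-j}\cdot P^{j}\cdot V)=(L^{d}\cdot V),
\]
because $P$ is numerically trivial, so every term containing at least one factor of $P$ vanishes. Third, $(L^d\cdot V)>0$ since $L$ is ample, and Nakai--Moishezon then gives that $L\otimes P$ is ample.

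The only point needing care is the justification that intersection numbers with $P$ vanish when $P$ is merely algebraically trivial; this is the step I expect to be the main (if mild) obstacle. I would handle it directly: restrict $P$ to $V$ (or to a resolution of $V$), where it remains algebraically equivalent to $\OO_V$. Intersection numbers are constant in flat families, and there is a connected family of line bundles joining $P|_V$ to $\OO_V$, namely the Poincaré bundle over $\wA$ restricted to $V\times\wA$. Hence any intersection number involving $P$ equals the corresponding one involving $\OO_V$, which is zero.

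As an alternative route avoiding intersection theory, on an abelian variety one can write $P\cong t_a^{*}L\otimes L^{-1}$ for some $a\in A(k)$, because the map $\phi_L\colon A\to\wA$ is surjective for ample $L$. Then $L\otimes P\cong t_a^{*}L$, which is ample as the pullback of an ample bundle by an automorphism. I would present this as the main proof, since it is the cleanest, and keep the Nakai--Moishezon argument as a remark.
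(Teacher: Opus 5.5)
Your proposal is correct. The argument you relegate to a remark is essentially the paper's proof. The paper notes that $L\otimes P$ is numerically equivalent to $L$ by Lemma~\ref{lem:pic0-numerically-trivial} and concludes by Kleiman's criterion. It takes for granted the vanishing of intersection numbers involving $P$, which you justify via the family $\mathcal{P}|_{V\times\widehat{A}}$ over the connected base $\widehat{A}$.

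Your main proof is genuinely different. Rather than the numerical nature of ampleness, it uses the abelian-variety-specific theorem (Mumford, \emph{Abelian Varieties}, \S 8, Theorem~1) that for ample $L$ every $P\in\operatorname{Pic}^0(A)$ is of the form $t_a^*L\otimes L^{-1}$, whence $L\otimes P\cong t_a^*L$. This avoids intersection theory altogether and gives a stronger conclusion: $L\otimes P$ is a translate of $L$, so it inherits every property of $L$ preserved by automorphisms of $A$ (very ampleness, global generation, the value of $h^0$, and so on). On the other hand, it rests on deeper structural input: finiteness of $K(L)$ for ample $L$, the theorem of the square, and the identification of algebraically trivial bundles with translation-invariant ones. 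The paper's argument is formal once one knows ampleness is numerical, and applies verbatim to any numerically trivial line bundle on any projective variety.
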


\begin{proof}
By Lemma \ref{lem:pic0-numerically-trivial}, \(L\otimes P\) is numerically equivalent to \(L\). Ampleness of line bundles is invariant under numerical equivalence on a projective variety by Kleiman's criterion. Since \(L\) is ample, \(L\otimes P\) is ample.
\end{proof}

\begin{lemma}
\label{lem:canonical-trivial}
Let \(A\) be an abelian variety of dimension \(g\). Then

\[
\omega_A\simeq \OO_A.
\]
\end{lemma}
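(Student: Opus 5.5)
The plan is to show that the cotangent bundle $\Omega^1_A$ is trivial; the statement then follows by taking the top exterior power. Concretely, I will prove that $\Omega^1_A \simeq \OO_A\otimes_k T_0^\vee$, where $T_0=T_{A,0}$ is the tangent space at the origin. Then
\[
\omega_A=\textstyle\bigwedge^{g}\Omega^1_A\simeq \OO_A\otimes_k \bigwedge^{g}T_0^\vee\simeq \OO_A,
\]
since $\bigwedge^g T_0^\vee$ is a one-dimensional $k$-vector space and $\dim A=g$.

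To trivialize $\Omega^1_A$, I will use the group structure through translation-invariant differentials. For each cotangent vector $\alpha\in T_0^\vee=\mathfrak m_0/\mathfrak m_0^2$, define a global $1$-form $\omega_\alpha$ by setting $(\omega_\alpha)_a=(t_{-a})^*\alpha$ at each point $a\in A(k)$. Equivalently, I will use the multiplication map $\mu:A\times A\to A$ and $\Omega^1_{A\times A}\simeq p_1^*\Omega^1_A\oplus p_2^*\Omega^1_A$. Pulling back $\mu^*\Omega^1_A$, projecting to the $p_2^*$ factor, and then restricting along $A\times\{0\}$ yields a morphism $\OO_A\otimes_k T_0^\vee\to\Omega^1_A$ that is defined algebraically, so $\omega_\alpha$ is a genuine regular section.

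At a point $a$, this morphism is the cotangent map of the translation $t_a$ at $0$. Because $t_a$ is an isomorphism of varieties with inverse $t_{-a}$, the map on fibres is an isomorphism at every closed point. A morphism of locally free sheaves of the same rank $g$ that is an isomorphism on every fibre is itself an isomorphism, by Nakayama's lemma. Here $A$ is smooth, being a group variety over an algebraically closed field, so $\Omega^1_A$ is locally free of rank $g$.

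I expect the main obstacle to be making the construction of the invariant forms rigorous as a morphism of sheaves, rather than pointwise, which is why I will build it through $\mu^*$ and the Künneth decomposition of $\Omega^1_{A\times A}$. Smoothness is the ingredient that guarantees local freeness of rank $g$. Alternatively, one could just cite the standard fact (Mumford, \emph{Abelian Varieties}, \S4) that the tangent bundle of any group variety is trivial.
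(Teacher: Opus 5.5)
Your proposal is correct and is essentially the paper's argument: the paper extends tangent vectors at $0$ to translation-invariant vector fields to trivialize $T_A$, then takes duals and determinants. You do the dual thing with invariant $1$-forms, and, more carefully than the paper's sketch, you make the trivialization an honest morphism of sheaves via $\mu$ and the decomposition of $\Omega^1_{A\times A}$.

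One small slip: pulling $\mu^*\Omega^1_A\to p_2^*\Omega^1_A$ back along $A\times\{0\}$ gives a map $\Omega^1_A\to\OO_A\otimes_k T_0^\vee$, not the other way round. Fibrewise this is $T_a^\vee\to T_0^\vee$, the cotangent map of $t_a$ at $0$, as you say. Since it is a fibrewise isomorphism of rank-$g$ bundles, you can simply invert it.
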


\begin{proof}
The tangent bundle \(T_A\) is trivial. Indeed, every tangent vector at the identity element \(0\in A\) extends uniquely to a translation-invariant vector field on \(A\). Hence

\[
T_A\simeq T_{A,0}\otimes_k \OO_A.
\]
Taking duals and determinants gives

\[
\omega_A=\det\Omega_A^1\simeq \det(T_{A,0}^{\vee})\otimes_k \OO_A\simeq \OO_A.
\]
\end{proof}

\begin{corollary}[Serre duality on an abelian variety]
\label{cor:serre-duality}
For every coherent sheaf \(\mathcal F\) on \(A\), there is a functorial duality

\[
H^i(A,\mathcal F)^\vee \simeq \Ext^{g-i}_A(\mathcal F,\OO_A).
\]
In particular, if \(M\) is a line bundle on \(A\), then

\[
H^i(A,M)^\vee\simeq H^{g-i}(A,M^{-1}).
\]
\end{corollary}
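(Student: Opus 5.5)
The plan is to derive the corollary directly from Grothendieck--Serre duality on the smooth projective variety \(A\), using Lemma \ref{lem:canonical-trivial} to remove the dualizing sheaf. Since \(A\) is a smooth projective variety of dimension \(g\) over the algebraically closed field \(k\), its dualizing sheaf is \(\omega_A=\Omega^g_A\). Serre duality in its general form for coherent sheaves then provides a perfect pairing
\[
H^i(A,\mathcal F)\times \Ext^{g-i}_A(\mathcal F,\omega_A)\longrightarrow H^g(A,\omega_A)\simeq k .
\]
This pairing is functorial in \(\mathcal F\), and it yields an isomorphism \(H^i(A,\mathcal F)^\vee\simeq \Ext^{g-i}_A(\mathcal F,\omega_A)\) that is natural in \(\mathcal F\). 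The only input needed here is smoothness (so that \(A\) is Cohen--Macaulay) and projectivity; we quote this from Hartshorne III.7.

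Next, substitute the isomorphism \(\omega_A\simeq\OO_A\) from Lemma \ref{lem:canonical-trivial}. Fix one trivialization once and for all. It induces an isomorphism \(\Ext^{g-i}_A(\mathcal F,\omega_A)\simeq\Ext^{g-i}_A(\mathcal F,\OO_A)\) that is natural in \(\mathcal F\), and this gives the first displayed formula. The only subtlety is that functoriality must not depend on choices varying with \(\mathcal F\). Choosing a single trivialization independent of \(\mathcal F\) handles this.

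For the line bundle case, take \(\mathcal F=M\) locally free. Then \(\mathcal{E}xt^q(M,\OO_A)=0\) for \(q>0\), and \(\mathcal{H}om(M,\OO_A)=M^{-1}\). The local-to-global spectral sequence therefore degenerates, giving \(\Ext^{j}_A(M,\OO_A)\simeq H^j(A,M^{-1})\). Alternatively, write \(\Ext^j(M,\OO_A)=\Ext^j(\OO_A,M^{\vee})\) directly. Setting \(j=g-i\) gives \(H^i(A,M)^\vee\simeq H^{g-i}(A,M^{-1})\).

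Nothing here is a real obstacle. The one point worth stating explicitly is the hypothesis that \(A\) is smooth and projective, which holds for abelian varieties.
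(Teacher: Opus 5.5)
Your proposal is correct and follows the same route as the paper. The paper simply cites Serre duality from Hartshorne and combines it with $\omega_A\simeq\OO_A$ from Lemma~\ref{lem:canonical-trivial}; you also make explicit the choice of a single trivialization (needed for naturality) and the reduction of the line-bundle case via the local-to-global Ext spectral sequence. Your appeal to the general duality theorem for smooth (Cohen--Macaulay) projective schemes in Hartshorne III.7 is in fact the appropriate citation, since the paper's reference to Theorem 7.1 there is the projective-space case.
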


\begin{proof}
See \cite[Theorem 7.1]{Hartshorne}.
\end{proof}

\begin{theorem}[Riemann--Roch] \label{thm:riemann_roch}
Let $A$ be an abelian variety of dimension $g$, and let $M$ be a line bundle on $A$. Then
\[
 \chi(A, M) = \frac{c_1(M)^g}{g!},
\]
where the right-hand side denotes the degree of the zero-cycle $\int_A c_1(M)^g$. 
In particular, if $L$ is ample and $P \in \operatorname{Pic}^{0}(A)$, then
\[
    \chi(A, P \otimes L^{\otimes m}) = m^g \frac{(L^g)}{g!}
\]
for every $m \in \mathbb{Z}$.
\end{theorem}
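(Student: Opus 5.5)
The plan is to deduce the first formula from Hirzebruch--Riemann--Roch, using that all Chern and Todd classes of \(A\) vanish except in degree zero. By Lemma \ref{lem:canonical-trivial} and its proof, the tangent bundle is trivial, \(T_A\simeq T_{A,0}\otimes_k\OO_A\). Hence \(c(T_A)=1\), and therefore \(\operatorname{td}(T_A)=1\) in the Chow ring (or in rational cohomology). Hirzebruch--Riemann--Roch for a line bundle \(M\) then reads
\[
\chi(A,M)=\int_A \operatorname{ch}(M)\cdot\operatorname{td}(T_A)=\int_A e^{c_1(M)}=\int_A\frac{c_1(M)^g}{g!},
\]
since the only contribution of top degree \(g\) in \(e^{c_1(M)}=\sum_j c_1(M)^j/j!\) is the term \(j=g\). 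This gives the first assertion. (In arbitrary characteristic one would instead cite Mumford's Riemann--Roch on abelian varieties, \(\chi(M)^2=\deg\phi_M\) together with \(\chi(M)=(M^g)/g!\). Since we work in characteristic zero, HRR suffices.)

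For the second assertion, take \(M=P\otimes L^{\otimes m}\). By Lemma \ref{lem:pic0-numerically-trivial} and additivity of \(c_1\),
\[
c_1(P\otimes L^{\otimes m})=c_1(P)+m\,c_1(L)=m\,c_1(L)
\]
modulo numerical equivalence. The intersection number \(\int_A c_1(M)^g\) depends only on the numerical class of \(c_1(M)\). So the first formula yields
\[
\chi(A,P\otimes L^{\otimes m})=\frac{(m\,c_1(L))^g}{g!}=m^g\frac{(L^g)}{g!}.
\]
This holds for all \(m\in\mathbb Z\), including \(m\le 0\), because the computation is purely polynomial in \(m\).

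The only point needing care is justifying that \(c_1(P)\) contributes nothing to the top intersection. The NS-class vanishing of Lemma \ref{lem:pic0-numerically-trivial} gives algebraic (hence numerical) triviality, and intersection numbers are invariant under numerical equivalence. That is the step I would state explicitly. Everything else is the standard observation that the Todd class of an abelian variety is trivial.
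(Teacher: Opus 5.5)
Your proposal is correct and follows the paper's route: Hirzebruch--Riemann--Roch with trivial Todd class gives the general formula, and numerical triviality of $c_1(P)$ gives the twisted formula. You cite the triviality of $T_A$ from the proof of Lemma~\ref{lem:canonical-trivial}, rather than only $\omega_A\simeq\OO_A$, and this is the right justification, since a trivial canonical bundle kills only $c_1(T_A)$ and would not by itself force $\operatorname{td}(T_A)=1$.
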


\begin{proof}
Follows from the Hirzebruch–Riemann–Roch theorem and Lemma~\ref{lem:canonical-trivial}.
\end{proof}

\begin{definition}
Let \(A\) be a smooth projective variety of dimension \(n\), and let \(H\) be an ample line bundle on \(A\). A vector bundle \(\mathcal F\) on \(A\) is called arithmetically Cohen--Macaulay, or ACM, with respect to \(H\), if

\[
H^i(A,\mathcal F\otimes H^{\otimes m})=0
\]
for every \(m\in\mathbb Z\) and every \(0<i<n\).
\end{definition}

\begin{remark}
If \(n=1\), the ACM condition is vacuous, since there is no integer \(i\) satisfying \(0<i<1\).
\end{remark}

\begin{definition}
Let \(A\) be a smooth projective variety of dimension \(n\), and let \(H\) be an ample line bundle. A vector bundle \(\mathcal F\) on \(A\) is called Ulrich with respect to \(H\) if

\[
H^i(A,\mathcal F\otimes H^{-j})=0
\]
for all \(i\) and all \(1\leq j\leq n\).
\end{definition}

\begin{remark}
When \(H\) is very ample, this cohomological definition is equivalent to the usual definition that the graded module associated to \(\mathcal F\) has a linear resolution over the homogeneous coordinate ring. It is also equivalent to saying that \(\mathcal F\) is ACM and has the maximal possible number of global sections:

\[
h^0(A,\mathcal F)=\rk(\mathcal F)\deg_H(A).
\]
See \cite{EisenbudSchreyer,BeauvilleUlrich}.
\end{remark}


\section{Proof of the main theorem (Rank \texorpdfstring{$1$}{1} Case)}
\label{sec:proof-main}

The rank-one case follows from the Fourier--Mukai acyclicity of
nontrivial elements of $\operatorname{Pic}^{0}(A)$. Together with
Kodaira vanishing and Serre duality, this gives the ACM property with
respect to every ample polarization.

\begin{theorem}[Acyclicity of nontrivial algebraically trivial line
bundles]\label{thm:pic0-acyclicity}
Let $A$ be an abelian variety of dimension $g$, and let

\[
P\in\operatorname{Pic}^{0}(A)\setminus\{\mathcal{O}_{A}\}.
\]
Then

\[
H^{i}(A,P)=0
\qquad
\text{for every }0\leq i\leq g.
\]
Equivalently,

\[
\mathbf{R}\Gamma(A,P)\simeq 0.
\]
\end{theorem}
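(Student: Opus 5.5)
The plan is the classical argument: first show \(H^0(A,P)=0\), then propagate the vanishing to all degrees using the group structure of \(A\), namely the Künneth formula together with the theorem of the square. The Fourier--Mukai reformulation, that \(\mathbf R\Gamma(A,P)\) is the fibre of the Poincaré bundle's transform at the point \([P]\in\widehat A\), is a useful guide, but I will argue directly.

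\textbf{Step 1 (\(H^0\)).} Suppose \(0\neq s\in H^0(A,P)\). Since \(P\) is algebraically trivial, \(P^{-1}\in\operatorname{Pic}^0(A)\) as well.

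If \(H^0(A,P^{-1})\) also contains a nonzero section \(t\), then \(s\otimes t\) is a nonzero section of \(P\otimes P^{-1}\simeq\mathcal O_A\). It is therefore a nonzero constant, so \(s\) vanishes nowhere and \(P\simeq\mathcal O_A\), which is excluded.

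To get around the need for \(t\), I argue with degrees. Let \(D=\operatorname{div}(s)\ge 0\) be the divisor of \(s\). By Lemma~\ref{lem:pic0-numerically-trivial}, \(c_1(P)=0\), so \(D\cdot C=0\) for every curve \(C\) on \(A\). Choose \(C\) to be a complete intersection of general members of \(|L^{\otimes N}|\) for \(N\gg0\). Such a \(C\) meets every component of an effective nonzero divisor positively, so \(D=0\). Hence \(s\) is nowhere vanishing and \(P\simeq\mathcal O_A\), a contradiction. Thus \(H^0(A,P)=0\).

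\textbf{Step 2 (higher cohomology).} Let \(m,p_1,p_2:A\times A\to A\) be the addition map and the two projections. Because \(P\in\operatorname{Pic}^0(A)\), the theorem of the square (seesaw) gives
\[
m^*P\simeq p_1^*P\otimes p_2^*P .
\]
The composite \(A\to A\times A\to A\), \(x\mapsto (x,0)\mapsto x\), is the identity. Therefore \(m^*:H^i(A,P)\to H^i(A\times A,m^*P)\) is injective, with left inverse given by restriction to \(A\times\{0\}\).

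By Künneth,
\[
H^i(A\times A,\,p_1^*P\otimes p_2^*P)=\bigoplus_{j+k=i}H^j(A,P)\otimes H^k(A,P).
\]
Now argue by induction on \(i\). Let \(i\) be the least index with \(H^i(A,P)\neq0\); by Step 1, \(i\ge1\). Every summand \(H^j\otimes H^k\) with \(j+k=i\) has \(j<i\) or \(k<i\), except when one of \(j,k\) is \(0\). But \(H^0(A,P)=0\), so those terms vanish too. Hence the right-hand side is zero, and injectivity of \(m^*\) forces \(H^i(A,P)=0\), a contradiction. So \(H^i(A,P)=0\) for all \(i\).

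This is Mumford's argument. The identification of the restriction to \(A\times\{0\}\) of \(p_1^*P\otimes p_2^*P\) with \(P\) uses only that \(P|_{\{0\}}\) is trivial, which holds automatically.

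\textbf{Main obstacle.} The delicate point is Step 2's compatibility check. One must confirm that \(m^*\) followed by restriction to \(A\times\{0\}\) is the identity on \(H^i(A,P)\), not merely up to some scalar or twist. This is where the isomorphism \(m^*P\simeq p_1^*P\otimes p_2^*P\) must be normalized along \(A\times\{0\}\).

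As a cross-check, one can instead cite Mukai: the Fourier--Mukai transform of \(k(\hat 0)\) is \(\mathcal O_A\), and \(\mathbf R\Gamma(A,P_{\hat x})\) is the derived fibre at \(\hat x\) of the transform of \(\mathcal O_A\). That transform is \(k(\hat 0)[-g]\), whose fibre vanishes at every \(\hat x\neq\hat 0\).

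The equivalence with \(\mathbf R\Gamma(A,P)\simeq0\) is immediate, since a complex of vector spaces is zero exactly when all its cohomology groups vanish.
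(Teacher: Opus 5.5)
Your argument is correct, but it takes a different route from the paper's. The paper works entirely on the Fourier--Mukai side. By Lemma~\ref{lem:tensor-translation-FM} it gets $\Phi_{\mathcal P}(P_\alpha)\simeq k(-\alpha)[-g]$. It then identifies $\mathbf R\Gamma(A,P)$ with the derived fibre of this complex at $0_{\widehat A}$, which vanishes because the skyscraper sits at a different point.

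You instead give Mumford's direct proof. First, $H^0(A,P)=0$, by intersecting $\operatorname{div}(s)$ with a complete-intersection curve and using numerical triviality of $P$. Then the splitting $m^*P\simeq p_1^*P\otimes p_2^*P$, the left inverse of $m^*$ given by restriction along $x\mapsto(x,0)$, and K\"unneth kill the first nonvanishing $H^i(A,P)$.

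Your route is elementary and self-contained: it uses only seesaw, K\"unneth and intersection numbers, and works in any characteristic. It is also logically prior to the paper's. The standard proof of Theorem~\ref{thm:mukai-standard}, namely $\mathbf R p_{\widehat A,*}\mathcal P\simeq k(0_{\widehat A})[-g]$, uses exactly the vanishing of $\mathbf R\Gamma(A,P_\alpha)$ for $\alpha\neq 0$ to see that this complex is supported at the origin. So the paper's proof, read literally, takes its conclusion as input through the citation.

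What the paper's version buys is brevity once Mukai's theorem is granted. It also introduces the derived-fibre formula that is reused in Step~4 of Lemma~\ref{lem:existence}. Your closing cross-check (the fibre of $\Phi_{\mathcal P}(\mathcal O_A)$ at $\hat x\neq\hat 0$) is the paper's idea in a slightly cleaner form, since it bypasses the translation lemma and its sign conventions.

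Two minor points. First, the ``main obstacle'' you flag is not a real one. Injectivity of $m^*$ on $H^i(A,P)$ comes from the canonical identification $(m\circ i)^*P=P$ with $i(x)=(x,0)$. The vanishing of $H^i(A\times A,m^*P)$ does not depend on which isomorphism $m^*P\simeq p_1^*P\otimes p_2^*P$ you choose, so no normalization is needed. Second, in the K\"unneth step the clean reason each summand dies is that $j+k=i$ forces $\min(j,k)<i$. One factor therefore vanishes by minimality of $i$, which already covers $j=0$, so no separate case is needed.
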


We give a Fourier–Mukai proof. Let $\mathcal P$ be the normalized Poincaré line bundle on \(A\times \widehat A\). Let
\[
p_A:A\times \widehat A\to A,\qquad p_{\widehat A}:A\times \widehat A\to \widehat A
\]
be the projections. The Fourier--Mukai functor is

\[
\Phi_{\mathcal P}:\mathbf D^b(\operatorname{Coh}A)\to \mathbf D^b(\operatorname{Coh}\widehat A),
\]
defined by

\[
\Phi_{\mathcal P}(\mathcal F)
=
\textbf{R}p_{\widehat A,*}(p_A^*\mathcal F\otimes \mathcal P).
\]

We use the following standard facts due to Mukai \cite{MukaiDuality}; see also \cite[Chapter 9]{HuybrechtsFM}.

\begin{theorem}[Mukai]
\label{thm:mukai-standard}
The functor \(\Phi_{\mathcal P}\) is an equivalence of triangulated categories. Moreover,

\[
\Phi_{\mathcal P}(\OO_A)\simeq k(0_{\widehat A})[-g],
\]
where \(k(0_{\widehat A})\) denotes the skyscraper sheaf at the origin of \(\widehat A\).
\end{theorem}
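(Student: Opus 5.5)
The statement immediately above is Theorem~\ref{thm:mukai-standard}, which is Mukai's theorem, and the paper quotes it from \cite{MukaiDuality} and \cite[Chapter 9]{HuybrechtsFM}. I would not reprove the equivalence in full. The plan is to recall the structure of Mukai's argument and to verify the computation $\Phi_{\mathcal P}(\OO_A)\simeq k(0_{\widehat A})[-g]$ directly, since that is what the subsequent proof of Theorem~\ref{thm:pic0-acyclicity} uses.

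For the computation I would argue as follows.
\begin{enumerate}
\item By definition, $\Phi_{\mathcal P}(\OO_A)=\mathbf R p_{\widehat A,*}\mathcal P$. By cohomology and base change, its support consists of those $\alpha\in\widehat A$ for which the restriction $\mathcal P|_{A\times\{\alpha\}}=P_\alpha$ has nonzero cohomology.
\item For $P_\alpha\ne\OO_A$ nontrivial in $\Pic^0(A)$, use the classical argument (Mumford) that $H^\bullet(A,P_\alpha)=0$. The key identity is $m^*P_\alpha\simeq p_1^*P_\alpha\otimes p_2^*P_\alpha$ on $A\times A$. Since $m\circ(\mathrm{id},0)=\mathrm{id}$, the map $H^\bullet(A,P_\alpha)\to H^\bullet(A\times A,m^*P_\alpha)\to H^\bullet(A,P_\alpha)$ is the identity. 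The middle term equals $H^\bullet(A,P_\alpha)\otimes H^\bullet(A,P_\alpha)$ by K\"unneth.
\item Induct on the smallest degree $i_0$ with $H^{i_0}\ne0$. The composite map in degree $i_0$ factors through $H^{i_0}\otimes H^0=0$, provided $H^0(A,P_\alpha)=0$. That vanishing holds because a nonzero section of a degree-zero nontrivial line bundle cannot exist: both $P_\alpha$ and $P_\alpha^{-1}$ would then be effective with numerically trivial class. This step needs care, and it is the main obstacle I expect.
\item From the previous steps, $\mathbf Rp_{\widehat A,*}\mathcal P$ is supported at the origin $0$. Grothendieck duality, together with $\omega_A\simeq\OO_A$ (Lemma~\ref{lem:canonical-trivial}), shows that $\mathbf R p_{\widehat A,*}\mathcal P$ is concentrated in the single degree $g$. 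Its stalk is computed by the formal theory at $0$, where $\mathcal P$ restricts to $\OO_A$ and $H^g(A,\OO_A)$ is one-dimensional.
\item A finite-length sheaf at $0$ whose fibre there is $k$ is $k(0)$, which gives $\Phi_{\mathcal P}(\OO_A)\simeq k(0_{\widehat A})[-g]$.
\end{enumerate}

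For the equivalence, I would use the Bondal--Orlov/Bridgeland criterion.
\begin{enumerate}
\item Compute $\Ext^\bullet(P_\alpha,P_\beta)=H^\bullet(A,P_\beta\otimes P_\alpha^{-1})$. By the vanishing above, this is zero for $\alpha\ne\beta$, which gives orthogonality.
\item For $\alpha=\beta$ the group is $H^\bullet(\OO_A)$, so $\Hom$ is one-dimensional and higher Exts sit only in degrees $0,\dots,g$. This shows $\Phi_{\mathcal P}$ is fully faithful.
\item Since $\omega$ is trivial on both sides and the dimensions agree, full faithfulness implies the equivalence.
\end{enumerate}

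In practice the paper treats all of this as standard input, so the proposal is simply to cite Mukai for the theorem.
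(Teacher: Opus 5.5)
The paper gives no argument for this statement; it quotes Mukai \cite{MukaiDuality} and \cite[Chapter 9]{HuybrechtsFM}, so your bottom line of citing it agrees with the paper. Your direct verification of $\Phi_{\mathcal P}(\OO_A)\simeq k(0_{\widehat A})[-g]$, however, fails at step 5. A finite-length sheaf at $0_{\widehat A}$ whose fibre is $k$ need not be $k(0_{\widehat A})$. Let $\mathfrak m$ be the maximal ideal of $\mathcal O_{\widehat A,0}$. Every cyclic module $\mathcal O_{\widehat A,0}/I$ with $I$ an $\mathfrak m$-primary ideal has one-dimensional fibre, for instance $I=\mathfrak m^2$. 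So steps 1--4 together with base change in top degree only give $\Phi_{\mathcal P}(\OO_A)\simeq(\mathcal O_{\widehat A,0}/I)[-g]$. The phrase ``computed by the formal theory'' hides exactly the missing claim $I=\mathfrak m$, and the only input you actually use is the zeroth-order fact $H^g(A,\OO_A)\cong k$.

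Mumford's argument \cite{MumfordAV} closes this gap with first-order information.
\begin{itemize}
\item If $I\neq\mathfrak m$, Nakayama gives $I+\mathfrak m^2\neq\mathfrak m$. Hence there is a closed embedding $\operatorname{Spec}k[\varepsilon]\hookrightarrow\operatorname{Spec}(\mathcal O_{\widehat A,0}/I)$.
\item Base change for $R^g$ then gives $h^g(A,\mathcal P_\varepsilon)=2$. Here $0\to\OO_A\to\mathcal P_\varepsilon\to\OO_A\to0$ is the first-order deformation with class $0\neq\xi\in H^1(A,\OO_A)$.
\item But the connecting map $\cup\,\xi\colon H^{g-1}(A,\OO_A)\to H^g(A,\OO_A)$ is onto, because $H^\bullet(A,\OO_A)\cong\Lambda^\bullet H^1(A,\OO_A)$. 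This forces $h^g(A,\mathcal P_\varepsilon)=1$, a contradiction.
\end{itemize}

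Alternatively, your second half already yields the formula without steps 4--5, once the direction is fixed. The bundles $P_\alpha$ whose $\Ext$ groups you compute are the images $\Psi(k(\alpha))$ under the reverse transform $\Psi\colon\mathbf D^b(\widehat A)\to\mathbf D^b(A)$ with kernel $\mathcal P$. For $\Phi_{\mathcal P}$ itself, Bondal--Orlov concerns the bundles $\mathcal P|_{\{a\}\times\widehat A}\in\Pic^0(\widehat A)$, and orthogonality there needs biduality. So what your check actually proves is that $\Psi$ is an equivalence.

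Its inverse is its adjoint, the transform $A\to\widehat A$ with kernel $\mathcal P^{-1}[g]$, since both canonical bundles are trivial. Because $\mathcal P^{-1}\simeq(1_A\times(-1)_{\widehat A})^*\mathcal P$ by seesaw, this inverse is $(-1)^*_{\widehat A}\circ\Phi_{\mathcal P}[g]$. Hence $\Phi_{\mathcal P}$ is an equivalence. Applying this to $\Psi(k(0_{\widehat A}))=\OO_A$ gives $\Phi_{\mathcal P}(\OO_A)\simeq(-1)^*k(0_{\widehat A})[-g]=k(0_{\widehat A})[-g]$.

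Equivalently, full faithfulness forces $\operatorname{End}(\mathcal O_{\widehat A,0}/I)\cong\mathcal O_{\widehat A,0}/I$ to equal $\operatorname{End}(\OO_A)=k$, so $I=\mathfrak m$.
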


\begin{lemma}
\label{lem:tensor-translation-FM}
Let \(P_\alpha\in\operatorname{Pic}^{0}(A)\) correspond to a point \(\alpha\in \widehat A(k)\). Then

\[
\Phi_{\mathcal P}(P_\alpha)\simeq k(-\alpha)[-g],
\]
up to the conventional sign determined by the normalization of the Poincaré bundle.
\end{lemma}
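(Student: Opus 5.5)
The plan is to compute $\Phi_{\mathcal P}(P_\alpha)$ directly from the definition. We will use the see-saw/theorem of the square description of the Poincaré bundle together with base change, and then reduce to Theorem~\ref{thm:mukai-standard}.

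The key identity is
\[
p_A^*P_\alpha\otimes\mathcal P\simeq (1\times t_{\alpha})^*\mathcal P .
\]
It follows because both sides restrict to $P_\alpha\otimes P_\beta\simeq P_{\alpha+\beta}$ on $A\times\{\beta\}$. Here we use that $\beta\mapsto P_\beta$ is a group homomorphism $\widehat A\to\operatorname{Pic}^0(A)$. Both sides are also trivial on $\{0\}\times\widehat A$, by the normalization $\mathcal P|_{\{0\}\times\widehat A}\simeq\mathcal O_{\widehat A}$ and $P_\alpha|_{0}\simeq k$. The see-saw principle then gives the isomorphism.

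With this identity, the projection $p_{\widehat A}$ commutes with the automorphism $1\times t_\alpha$ in the sense that $p_{\widehat A}\circ(1\times t_\alpha)=t_\alpha\circ p_{\widehat A}$. Since $t_\alpha$ is an isomorphism, flat base change gives
\[
\Phi_{\mathcal P}(P_\alpha)=\mathbf R p_{\widehat A,*}(1\times t_\alpha)^*\mathcal P\simeq t_\alpha^*\,\mathbf Rp_{\widehat A,*}\mathcal P=t_\alpha^*\Phi_{\mathcal P}(\mathcal O_A).
\]
Theorem~\ref{thm:mukai-standard} now gives $\Phi_{\mathcal P}(\mathcal O_A)\simeq k(0_{\widehat A})[-g]$, and $t_\alpha^*k(0)\simeq k(-\alpha)$, since $t_\alpha^{-1}(0)=-\alpha$. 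This yields $\Phi_{\mathcal P}(P_\alpha)\simeq k(-\alpha)[-g]$, as claimed. The sign depends on the convention for $t_\alpha$ and on the normalization of $\mathcal P$, which is why the statement leaves room for it.

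The only delicate step is the see-saw identification of $p_A^*P_\alpha\otimes\mathcal P$. It requires $P_\alpha\simeq\mathcal P|_{A\times\{\alpha\}}$, which is the defining universal property of $\mathcal P$, and it also requires the additivity of the Poincaré family. Once that is in place, the rest is formal base change along an isomorphism. Flat base change is trivially valid here, since we are simply pulling back by an automorphism of the base.
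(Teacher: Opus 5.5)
Your proof is correct and takes the same route as the paper. The paper asserts the exchange formula $\Phi_{\mathcal P}(P_\alpha\otimes\mathcal F)\simeq t^{*}_{\pm\alpha}\Phi_{\mathcal P}(\mathcal F)$ with $\mathcal F=\mathcal O_A$ without proof and then applies Mukai's $\Phi_{\mathcal P}(\mathcal O_A)\simeq k(0_{\widehat A})[-g]$; you supply that formula via the see-saw identity $p_A^*P_\alpha\otimes\mathcal P\simeq(1\times t_\alpha)^*\mathcal P$ and flat base change, and your $t_\alpha^*k(0_{\widehat A})\simeq k(-\alpha)$ is the consistent sign, whereas the paper's $t_{-\alpha}^*k(0_{\widehat A})$ would literally give $k(\alpha)$, a slip covered by the lemma's ``up to sign'' proviso.
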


\begin{proof}
Tensoring by \(P_\alpha\) on \(A\) corresponds under the Fourier--Mukai transform to translation by \(-\alpha\) on \(\widehat A\). More precisely,

\[
\Phi_{\mathcal P}(P_\alpha\otimes \mathcal F)
\simeq
t_{-\alpha}^*\Phi_{\mathcal P}(\mathcal F).
\]
Applying this with \(\mathcal F=\OO_A\) and using Theorem \ref{thm:mukai-standard}, we get

\[
\Phi_{\mathcal P}(P_\alpha)
\simeq
t_{-\alpha}^*k(0_{\widehat A})[-g]
\simeq
k(-\alpha)[-g].
\]
\end{proof}

\begin{proof}[Proof of Theorem \ref{thm:pic0-acyclicity}]
Let \(P=P_\alpha\), where \(\alpha\in\widehat A(k)\). Since \(P\neq \OO_A\), we have \(\alpha\neq 0_{\widehat A}\).

By definition of the Fourier--Mukai transform, the derived fiber of \(\Phi_{\mathcal P}(P)\) at a point \(\beta\in\widehat A\) is

\[
\Phi_{\mathcal P}(P)\otimes^{\mathbf L} k(\beta)
\simeq
\textbf{R}\Gamma(A,P\otimes P_\beta).
\]
Taking \(\beta=0_{\widehat A}\), we obtain

\[
\Phi_{\mathcal P}(P)\otimes^{\mathbf L} k(0_{\widehat A})
\simeq
\textbf{R}\Gamma(A,P).
\]
But by Lemma \ref{lem:tensor-translation-FM},

\[
\Phi_{\mathcal P}(P)\simeq k(-\alpha)[-g].
\]
Since \(\alpha\neq 0\), the skyscraper sheaf \(k(-\alpha)\) has zero derived fiber at \(0_{\widehat A}\). Therefore

\[
\textbf{R}\Gamma(A,P)=0.
\]
Hence

\[
H^i(A,P)=0
\]
for all \(i\).
\end{proof}


We now prove the rank-one case. 

\begin{theorem}\label{thm:main}
Let $(A,L)$ be a polarized abelian variety of dimension $g\geq 1$
over an algebraically closed field of characteristic zero. Then every
nontrivial line bundle

\[
P\in \operatorname{Pic}^{0}(A)
\]
is arithmetically Cohen--Macaulay with respect to $L$.
\end{theorem}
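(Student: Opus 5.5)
The plan is to split the verification of $H^i(A,P\otimes L^{\otimes m})=0$ for $0<i<g$ into three cases according to the sign of $m$. If $g=1$ there is nothing to prove, since the ACM condition is vacuous, so we assume $g\ge 2$.

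\emph{Case $m=0$.} This is exactly Theorem~\ref{thm:pic0-acyclicity}. Since $P\neq\OO_A$, all cohomology of $P$ vanishes, in particular in the intermediate range.

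\emph{Case $m>0$.} By Lemma~\ref{lem:pic0-preserves-ampleness}, $M:=P\otimes L^{\otimes m}$ is ample. Lemma~\ref{lem:canonical-trivial} gives $\omega_A\simeq\OO_A$, so we may write $M\simeq\omega_A\otimes M$. Kodaira vanishing (available since $\operatorname{char}k=0$) then gives $H^i(A,M)=0$ for all $i>0$.

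\emph{Case $m<0$.} Here we use Serre duality in the form of Corollary~\ref{cor:serre-duality}:
\[
H^i(A,P\otimes L^{\otimes m})^\vee\simeq H^{g-i}(A,P^{-1}\otimes L^{\otimes(-m)}).
\]
Now $P^{-1}\in\operatorname{Pic}^0(A)$ and $-m>0$, so the right-hand bundle is ample by Lemma~\ref{lem:pic0-preserves-ampleness}. By the previous case its cohomology vanishes in every degree $g-i>0$, that is, for all $i<g$. This covers the whole range $0<i<g$.

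Combining the three cases gives the ACM property. The only nontrivial input is the untwisted case, which is already handled by Theorem~\ref{thm:pic0-acyclicity}; this is also the only place where the hypothesis $P\neq\OO_A$ is used. The one point to watch is that Kodaira vanishing requires characteristic zero. In positive characteristic one could instead use Mumford's vanishing theorem for ample line bundles on abelian varieties, but the stated hypothesis makes this unnecessary.
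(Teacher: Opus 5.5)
Your proposal is correct and matches the paper's proof: the same split on the sign of $m$, with Theorem~\ref{thm:pic0-acyclicity} for $m=0$, Kodaira vanishing (using $\omega_A\simeq\OO_A$) for $m>0$, and Serre duality followed by Kodaira vanishing for $m<0$. Your explicit remark that the case $g=1$ is vacuous is a harmless addition, since the paper's three-case argument already covers it.
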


\begin{proof}
Let $m\in \mathbb{Z}$. If $m>0$, then $P\otimes L^{\otimes m}$ is
ample. Since $\omega_A\simeq \mathcal{O}_A$, Kodaira vanishing gives

\[
H^i\bigl(A,P\otimes L^{\otimes m}\bigr)=0
\qquad \text{for all } i>0.
\]

If $m=0$, the Fourier--Mukai acyclicity of nontrivial algebraically
trivial line bundles gives

\[
H^i(A,P)=0
\qquad \text{for all } i.
\]

If $m<0$, Serre duality gives

\[
H^i\bigl(A,P\otimes L^{\otimes m}\bigr)^\vee
\simeq
H^{g-i}\bigl(A,P^{-1}\otimes L^{\otimes(-m)}\bigr).
\]
The line bundle on the right is ample, so Kodaira vanishing shows that
this group is zero whenever $g-i>0$, equivalently whenever $i<g$.

Therefore, for every $m\in\mathbb{Z}$,

\[
H^i\bigl(A,P\otimes L^{\otimes m}\bigr)=0
\qquad \text{for } 0<i<g.
\]
Hence $P$ is ACM with respect to $L$.
\end{proof}
\begin{definition}
The \emph{ACM complexity} of $(A,L)$ is

\[
c_{\mathrm{ACM}}(A,L)
=
\min\left\{
\operatorname{rk}(E)
\;\middle|\;
E\text{ is a nonzero ACM vector bundle on }(A,L)
\right\}.
\]
If no such bundle exists, we set
$c_{\mathrm{ACM}}(A,L)=\infty$.
\end{definition}

\begin{corollary}
\label{cor:acm-complexity}
For every polarized abelian variety $(A,L)$,

\[
c_{\mathrm{ACM}}(A,L)=1.
\]
\end{corollary}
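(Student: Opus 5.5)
The plan is to establish the two inequalities $c_{\mathrm{ACM}}(A,L)\le 1$ and $c_{\mathrm{ACM}}(A,L)\ge 1$ separately. The lower bound is immediate from the definition, since a nonzero vector bundle has rank at least one. The whole content therefore lies in producing a single rank-one ACM bundle on $(A,L)$. For this I will invoke Theorem~\ref{thm:main}.

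The first step is to pick a nontrivial element $P\in\operatorname{Pic}^{0}(A)$. This is where the only genuine input sits, namely the nontriviality of $\operatorname{Pic}^0(A)$. Since $A$ has dimension $g\ge 1$, the dual abelian variety $\widehat A=\operatorname{Pic}^0(A)$ also has dimension $g\ge 1$. It is therefore a positive-dimensional variety over an algebraically closed field and has infinitely many $k$-points; in particular it has a point $\alpha\neq 0_{\widehat A}$. Setting $P=P_\alpha$ gives $P\not\simeq\mathcal O_A$.

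Alternatively, one can take $P$ to be a nontrivial torsion point, for instance a $2$-torsion point. In characteristic zero the $2$-torsion of $\widehat A$ is $(\mathbb Z/2)^{2g}\neq 0$.

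The second step applies Theorem~\ref{thm:main}: $P$ is ACM with respect to $L$. Since $P$ is a line bundle, it is a nonzero vector bundle of rank $1$, so the minimum in the definition is at most $1$. Combined with the lower bound, this gives $c_{\mathrm{ACM}}(A,L)=1$.

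The main point to watch is the case $g=1$. There the ACM condition is vacuous, and even $\mathcal O_A$ is ACM, so the conclusion holds trivially; the argument above covers it as well. No other obstacle is expected, since the statement is a direct consequence of Theorem~\ref{thm:main} together with the existence of a nonzero point of $\widehat A$.
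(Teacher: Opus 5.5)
Your proposal is correct and matches the paper's argument: the lower bound is just positivity of rank, and the upper bound comes from applying Theorem~\ref{thm:main} to a nontrivial $P\in\operatorname{Pic}^0(A)$. Your explicit check that such a $P$ exists, because $\dim\widehat A=g\ge 1$, fills in a point the paper leaves implicit.
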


\begin{proof}
A nontrivial element of $\operatorname{Pic}^{0}(A)$ is an ACM bundle
of rank one by Theorem~\ref{thm:main}. Since every nonzero
vector bundle has positive rank, the minimum is one.
\end{proof}

\section{Cohomology and Consequences}
\label{sec:cohomology-consequences}

Let $(A,L)$ be a polarized abelian variety of dimension $g$, let

\[
P\in\operatorname{Pic}^{0}(A)\setminus\{\mathcal{O}_A\},
\qquad
h=\chi(A,L)=\frac{L^g}{g!}.
\]
The vanishing theorem of Section~\ref{sec:proof-main}, together with
the Hirzebruch–Riemann–Roch theorem, determines the complete cohomology table of
$P\otimes L^{\otimes m}$.

\begin{theorem}[Cohomology table]
\label{thm:cohomology-table}
For every $m\in\mathbb{Z}$ and every $0\leq i\leq g$,

\[
h^i\bigl(A,P\otimes L^{\otimes m}\bigr)
=
\begin{cases}
m^g h, & m>0 \text{ and } i=0,\\[2mm]
(-m)^g h, & m<0 \text{ and } i=g,\\[2mm]
0, & \text{otherwise}.
\end{cases}
\]
\end{theorem}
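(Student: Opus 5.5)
The plan is a case split on the sign of $m$, mirroring the proof of Theorem~\ref{thm:main}, with Riemann--Roch (Theorem~\ref{thm:riemann_roch}) supplying the single surviving dimension in each case. Throughout set $M_m=P\otimes L^{\otimes m}$. By Lemma~\ref{lem:pic0-numerically-trivial}, $c_1(M_m)=m\,c_1(L)$, so $\chi(A,M_m)=m^g\,(L^g)/g!=m^g h$ for every $m\in\mathbb Z$.

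\emph{Case $m=0$.} Theorem~\ref{thm:pic0-acyclicity} gives $H^i(A,P)=0$ for all $0\le i\le g$. This is the ``otherwise'' line of the table, and it is consistent with $\chi=0^g h=0$ since $g\ge 1$.

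\emph{Case $m>0$.} By Lemma~\ref{lem:pic0-preserves-ampleness}, $M_m$ is ample. Since $\omega_A\simeq\OO_A$ (Lemma~\ref{lem:canonical-trivial}), Kodaira vanishing applied to $M_m=\omega_A\otimes M_m$ gives $H^i(A,M_m)=0$ for $i>0$. Hence $h^0(A,M_m)=\chi(A,M_m)=m^g h$.

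\emph{Case $m<0$.} Write $n=-m>0$. Corollary~\ref{cor:serre-duality} gives
\[
h^i(A,M_m)=h^{g-i}\bigl(A,P^{-1}\otimes L^{\otimes n}\bigr).
\]
The bundle $P^{-1}\otimes L^{\otimes n}$ is ample, again by Lemma~\ref{lem:pic0-preserves-ampleness} since $P^{-1}\in\Pic^0(A)$. By the previous case its cohomology vanishes outside degree $0$, and $h^0=n^g h$. Therefore $h^i(A,M_m)=0$ for $i\neq g$, and $h^g(A,M_m)=n^g h=(-m)^g h$. As a check, $\chi(A,M_m)=(-1)^g h^g=(-1)^g n^g h=m^g h$, which agrees with Riemann--Roch.

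The main obstacle is the $m<0$ case, where the point is to route the computation through the positive case for $P^{-1}$ via Serre duality rather than trying to evaluate $\chi$ directly. The only subtlety there is that $P^{-1}$ is again a (possibly trivial-looking) element of $\Pic^0(A)$, so the ampleness statement must be applied to $P^{-1}$ and not to $P$. The rest is bookkeeping with Theorem~\ref{thm:riemann_roch}.
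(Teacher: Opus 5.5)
Your proposal is correct and follows the paper's proof: the same three-way split on the sign of $m$, with Kodaira vanishing for $m>0$, Theorem~\ref{thm:pic0-acyclicity} for $m=0$, Serre duality plus Kodaira for $m<0$, and Riemann--Roch fixing the one surviving dimension. The only cosmetic difference is that for $m<0$ you get $h^g$ by duality from the positive case applied to $P^{-1}$, whereas the paper reads it off directly from $\chi(A,P\otimes L^{\otimes m})=m^g h$; your Euler-characteristic check confirms the two agree.
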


\begin{proof}
For $m>0$, Kodaira vanishing gives

\[
H^i\bigl(A,P\otimes L^{\otimes m}\bigr)=0
\qquad (i>0).
\]
For $m=0$, Theorem~\ref{thm:pic0-acyclicity} gives

\[
H^i(A,P)=0
\qquad\text{for all }i.
\]
For $m<0$, Serre duality and Kodaira vanishing give

\[
H^i\bigl(A,P\otimes L^{\otimes m}\bigr)=0
\qquad (i<g).
\]
Finally, Riemann--Roch gives

\[
\chi\bigl(A,P\otimes L^{\otimes m}\bigr)=m^g h.
\]
The asserted dimensions now follow from the vanishings above.
\end{proof}

\begin{corollary}
\label{cor:possible-cohomology}
The only possible nonzero cohomology groups of
$P\otimes L^{\otimes m}$ are

\[
H^0\bigl(A,P\otimes L^{\otimes m}\bigr)
\quad\text{for }m>0
\]
and

\[
H^g\bigl(A,P\otimes L^{\otimes m}\bigr)
\quad\text{for }m<0.
\]
For $m=0$, all cohomology groups vanish.
\end{corollary}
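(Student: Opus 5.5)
This is a direct consequence of Theorem~\ref{thm:cohomology-table}, so I will read it off that table rather than redo any vanishing argument. I fix $m\in\mathbb{Z}$ and $0\le i\le g$, and split into the three cases $m>0$, $m=0$ and $m<0$, exactly as in the proof of Theorem~\ref{thm:main}.

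\emph{Case $m>0$.} The table gives $h^i(A,P\otimes L^{\otimes m})=0$ for every $i>0$. So only $H^0$ can survive, and its dimension is $m^g h$. This is nonzero, because $h=L^g/g!>0$ for $L$ ample.

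\emph{Case $m<0$.} The table gives vanishing for every $i<g$. So only $H^g$ can be nonzero, and its dimension is $(-m)^g h$.

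\emph{Case $m=0$.} Here every $h^i$ vanishes. This is exactly the content of Theorem~\ref{thm:pic0-acyclicity}, and it yields the final sentence of the statement.

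There is no real obstacle in this argument. The only point needing care is the claim that the surviving groups are genuinely ``possible'' nonzero groups. For that it suffices to note that Riemann--Roch (Theorem~\ref{thm:riemann_roch}) gives a nonzero Euler characteristic $\pm|m|^g h$ when $m\neq 0$. That nonzero Euler characteristic is concentrated in the single surviving degree, so the group in that degree is in fact nonzero.

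Finally, I will remark on dimension one. When $g=1$, the two cases are consistent with each other: for $m>0$ the group is $H^0$, and for $m<0$ it is $H^1=H^g$. No further argument is needed in that case.
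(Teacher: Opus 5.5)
Your proposal is correct and matches the paper's proof, which reads the corollary directly off Theorem~\ref{thm:cohomology-table}. Your extra check that the surviving group is actually nonzero (using $h=L^g/g!>0$) goes slightly beyond what the statement asserts, and it is valid.
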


\begin{proof}
This is immediate from Theorem~\ref{thm:cohomology-table}.
\end{proof}

We record several consequences of the cohomology computation.

\begin{corollary}
\label{cor:structure-not-acm}
If $g\geq2$, then $\mathcal{O}_A$ is not ACM with respect to any
polarization on $A$.
\end{corollary}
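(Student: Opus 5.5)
The plan is to exhibit a single twist and a single intermediate degree where the cohomology of $\mathcal{O}_A$ is nonzero. The natural candidate is $m=0$ and $i=1$: the ACM condition with respect to $L$ would demand $H^1(A,\mathcal{O}_A\otimes L^{\otimes 0})=H^1(A,\mathcal{O}_A)=0$. Since $g\ge 2$, the index $i=1$ does lie in the range $0<i<g$, so it suffices to show $H^1(A,\mathcal{O}_A)\neq 0$. Because the polarization never enters when $m=0$, the same argument works for every ample $L$ at once.

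For the nonvanishing I would use the standard description of the cohomology ring of an abelian variety:
\[
H^i(A,\mathcal{O}_A)\simeq \textstyle\bigwedge^i H^1(A,\mathcal{O}_A),
\]
together with the identification of $H^1(A,\mathcal{O}_A)$ with the tangent space $T_{0}\widehat{A}$ of $\operatorname{Pic}^0(A)$ at the origin. Since $\widehat A$ is an abelian variety of dimension $g$, this gives $h^1(A,\mathcal{O}_A)=g\ge 2>0$.

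As an alternative, and to stay within the tools already set up in the paper, I would also record a Fourier--Mukai argument parallel to the proof of Theorem~\ref{thm:pic0-acyclicity}. Taking the derived fiber of $\Phi_{\mathcal P}(\mathcal{O}_A)\simeq k(0_{\widehat A})[-g]$ at $0_{\widehat A}$ computes $\mathbf{R}\Gamma(A,\mathcal{O}_A)$. The derived fiber of a skyscraper sheaf at its own support is $\bigoplus_j \bigwedge^j T_0\widehat A^{\vee}[j]$, via the Koszul resolution. After the shift by $[-g]$, this produces a nonzero group in every degree $0\le i\le g$, and in particular $H^1(A,\mathcal{O}_A)$ has dimension $g$. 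This step is the main point to handle carefully: the Koszul computation of $\operatorname{Tor}$ of $k(0)$ against itself, and the bookkeeping of indices after the shift. Citing the classical result (for instance Mumford) is cleaner, and I would do that, mentioning the Fourier--Mukai route only as a remark.

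Finally, I would conclude that $H^1(A,\mathcal{O}_A\otimes L^{0})\ne 0$ with $0<1<g$, so $\mathcal{O}_A$ fails the ACM condition for every polarization $L$.
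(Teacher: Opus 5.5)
Your proposal is correct and follows essentially the same argument as the paper. The paper also takes the untwisted twist $m=0$ in degree $i=1$ and uses $h^1(A,\mathcal{O}_A)=g>0$. You go a bit further by justifying that value, either via $H^1(A,\mathcal{O}_A)\simeq T_0\widehat{A}$ or via the Koszul computation of the derived fiber of $k(0_{\widehat A})[-g]$, whereas the paper simply asserts it.
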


\begin{proof}
Since

\[
h^1(A,\mathcal{O}_A)=g>0,
\]
the required intermediate cohomology vanishing fails for the
untwisted bundle $\mathcal{O}_A$.
\end{proof}

\begin{remark}
When $g=1$, the ACM condition is vacuous, so $\mathcal{O}_A$ is ACM.
\end{remark}

\begin{corollary}
\label{cor:pic0-classification}
Assume $g\geq2$. For $Q\in\operatorname{Pic}^{0}(A)$, the following
conditions are equivalent:

\[
Q\text{ is ACM with respect to }L
\qquad\Longleftrightarrow\qquad
Q\not\simeq\mathcal{O}_A.
\]
Thus $\mathcal{O}_A$ is the unique non-ACM element of
$\operatorname{Pic}^{0}(A)$.
\end{corollary}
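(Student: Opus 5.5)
The equivalence splits into two implications, and each follows from a result already proved above.

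For the direction ``$Q\not\simeq\mathcal{O}_A \Rightarrow Q$ is ACM'', apply Theorem~\ref{thm:main} directly. Its proof combines three inputs. First, Kodaira vanishing handles $m>0$, using Lemma~\ref{lem:pic0-preserves-ampleness} to see that $Q\otimes L^{\otimes m}$ is ample. Second, the Fourier--Mukai acyclicity of Theorem~\ref{thm:pic0-acyclicity} handles $m=0$. Third, Serre duality (Corollary~\ref{cor:serre-duality}) together with Kodaira vanishing handles $m<0$. These cover every twist $m\in\mathbb{Z}$, so the intermediate cohomology of $Q\otimes L^{\otimes m}$ vanishes for all $m$ and $Q$ is ACM with respect to $L$.

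For the converse, argue by contraposition. If $Q\simeq\mathcal{O}_A$, then Corollary~\ref{cor:structure-not-acm} applies, since $g\ge 2$. Concretely, take $m=0$ and $i=1$, which is allowed because $0<1<g$. Then
\[
H^1(A,\mathcal{O}_A)\cong H^1(A,\mathcal{O}_A)
\]
has dimension $g>0$, using $h^1(A,\mathcal{O}_A)=\dim\widehat A=g$ (equivalently $H^i(A,\mathcal{O}_A)\cong\wedge^i H^1(A,\mathcal{O}_A)$). Hence $\mathcal{O}_A$ violates the ACM condition at the twist $m=0$, and so $\mathcal{O}_A$ is not ACM.

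Combining the two directions gives the equivalence. Since $\mathcal{O}_A$ lies in $\operatorname{Pic}^0(A)$ and is the only element excluded, it is the unique non-ACM element. The only point requiring care is the hypothesis $g\ge 2$: it guarantees that the range $0<i<g$ is nonempty, so that $i=1$ is a legitimate intermediate degree. This is consistent with the Remark after Corollary~\ref{cor:structure-not-acm}, which notes that the equivalence fails when $g=1$. Beyond that, no step poses a real obstacle, because all the substantive work was done in Theorem~\ref{thm:pic0-acyclicity} and Theorem~\ref{thm:main}.
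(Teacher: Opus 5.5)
Your proof is correct and follows the paper's argument: the nontrivial elements are ACM by Theorem~\ref{thm:main}, and $\mathcal{O}_A$ fails the ACM condition at $m=0$, $i=1$ because $h^1(A,\mathcal{O}_A)=g>0$, which is Corollary~\ref{cor:structure-not-acm}. The displayed isomorphism $H^1(A,\mathcal{O}_A)\cong H^1(A,\mathcal{O}_A)$ is a vacuous tautology and can simply be dropped.
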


\begin{proof}
The nontrivial elements are ACM by
Theorem~\ref{thm:main}, whereas $\mathcal{O}_A$ is not ACM
by Corollary~\ref{cor:structure-not-acm}.
\end{proof}

\begin{corollary}
\label{cor:rank-one-family}
The punctured dual abelian variety

\[
\operatorname{Pic}^{0}(A)\setminus\{\mathcal{O}_A\}
\]
parametrizes a $g$-dimensional family of pairwise non-isomorphic
rank-one ACM bundles.
\end{corollary}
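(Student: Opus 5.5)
The plan is to split the statement of Corollary~\ref{cor:rank-one-family} into three assertions and prove them in turn: every member of the family is ACM, the members are pairwise non-isomorphic, and the parameter space has dimension $g$.

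\emph{ACM property.} By Theorem~\ref{thm:main}, every point $\alpha\in\widehat A(k)\setminus\{0_{\widehat A}\}$ gives a line bundle $P_\alpha$ that is ACM with respect to $L$.

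\emph{Pairwise non-isomorphism.} Under the identification $\widehat A=\operatorname{Pic}^{0}(A)$, distinct points $\alpha\neq\beta$ correspond to non-isomorphic line bundles $P_\alpha\not\simeq P_\beta$. This is essentially the definition of the dual variety as a moduli space. If one wants an argument internal to the paper, Lemma~\ref{lem:tensor-translation-FM} gives
\[
\Phi_{\mathcal P}(P_\alpha)\simeq k(-\alpha)[-g],
\]
and skyscraper sheaves at distinct points are non-isomorphic. Since $\Phi_{\mathcal P}$ is an equivalence (Theorem~\ref{thm:mukai-standard}), it reflects isomorphisms, so $P_\alpha\not\simeq P_\beta$.

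\emph{Dimension.} The dual abelian variety $\widehat A$ has dimension $g=\dim A$. Removing a single closed point leaves a nonempty open subset, which is still irreducible of dimension $g$. For $g\ge1$ this punctured set is nonempty.

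\emph{Algebraic family.} The family is realized by restricting the Poincaré bundle $\mathcal P$ to $A\times(\widehat A\setminus\{0\})$. Its fiber over $\alpha$ is $P_\alpha$, so this is a flat family of line bundles over a $g$-dimensional base.

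The main obstacle is the dimension statement $\dim\widehat A=g$, which the paper has not stated explicitly. I would take it as standard, citing Mumford or the fact that the tangent space to $\operatorname{Pic}^0$ at the origin is $H^1(A,\mathcal O_A)$, which has dimension $g$ (a fact already used in Corollary~\ref{cor:structure-not-acm}).
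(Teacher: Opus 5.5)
Your proposal is correct and follows the paper's argument: Theorem~\ref{thm:main} gives the ACM property, and $\dim \widehat A = g$ is unchanged by removing the origin. Your extra points (distinct points of $\operatorname{Pic}^0(A)$ are non-isomorphic line bundles, and the restricted Poincar\'e bundle realizes the family) make explicit what the paper leaves implicit; one small correction is that in your Fourier--Mukai variant you only need that any functor preserves isomorphisms, not that $\Phi_{\mathcal P}$ reflects them.
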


\begin{proof}
The dual abelian variety has dimension $g$, and removing its origin
does not change its dimension. The assertion follows from
Theorem~\ref{thm:main}.
\end{proof}

\begin{corollary}
\label{cor:stable-rank-one}
Every nontrivial $P\in\operatorname{Pic}^{0}(A)$ is a slope-stable
ACM line bundle with respect to $L$.
\end{corollary}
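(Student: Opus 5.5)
The plan is to separate the two claims in Corollary~\ref{cor:stable-rank-one}. The ACM assertion is already Theorem~\ref{thm:main}, so nothing new is needed there. The substance is slope-stability, and I expect it to reduce to a formal fact about rank-one torsion-free sheaves. Recall that slope-stability with respect to \(L\) asks that every coherent subsheaf \(\mathcal F\subset P\) with \(0<\rk\mathcal F<\rk P\) satisfies \(\mu_L(\mathcal F)<\mu_L(P)\), where \(\mu_L(\mathcal F)=c_1(\mathcal F)\cdot L^{g-1}/\rk\mathcal F\).

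The key step is that the range of admissible ranks is empty. Since \(\rk P=1\), there is no integer strictly between \(0\) and \(1\), so the stability condition holds vacuously and \(P\) is slope-stable. The same holds if one uses the alternative convention for stability: take a subsheaf \(\mathcal F\subset P\) with \(0<\rk\mathcal F\) and torsion-free quotient \(P/\mathcal F\). Then \(\rk\mathcal F=1\), so \(P/\mathcal F\) has rank zero. Being torsion-free, it must be zero, so \(\mathcal F=P\) and no proper saturated destabilizing subsheaf exists.

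For completeness I would also record the slope itself. By Lemma~\ref{lem:pic0-numerically-trivial}, \(c_1(P)=0\) in \(\NS(A)\), so \(\mu_L(P)=0\). This value is the same for every ample \(L\), so the stability statement holds uniformly in the polarization. It also fits Lemma~\ref{lem:pic0-preserves-ampleness}: twisting by \(P\) does not change numerical classes.

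I do not expect a real obstacle. The only point needing care is fixing the convention for stability: proper subsheaves of smaller rank, or saturated subsheaves with torsion-free quotient. The argument should be phrased so that either convention visibly gives the vacuous case. If the convention allows arbitrary nonzero proper subsheaves \(\mathcal F\subsetneq P\), then \(\mathcal F=P\otimes I_Z\) for an ideal sheaf \(I_Z\). Then \(c_1(\mathcal F)=c_1(P)-[D]\), where \(D\) is the divisorial part of \(Z\) and is effective. Since \(L\) is ample, \(D\cdot L^{g-1}\ge 0\), so \(\mu_L(\mathcal F)\le\mu_L(P)\). This weak inequality is all that convention requires.
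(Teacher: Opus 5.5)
Your argument is correct and matches the paper's proof, which cites Theorem~\ref{thm:main} for the ACM property and notes that every line bundle is slope-stable, because no subsheaf has rank strictly between $0$ and $1$. One caution about your last paragraph: under a convention demanding strict inequality for all proper subsheaves, the weak inequality $\mu_L(\mathcal F)\le\mu_L(P)$ would not suffice, since $Z$ of codimension at least two gives equality (no line bundle would be stable, which is why that convention is not used). That paragraph is therefore unnecessary and is best dropped.
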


\begin{proof}
Every line bundle is slope-stable, and the ACM property follows from
Theorem~\ref{thm:main}.
\end{proof}

We next determine which twists of these line bundles are Ulrich.

\begin{theorem}
\label{thm:pic0-not-ulrich}
No nontrivial line bundle

\[
P\in\operatorname{Pic}^{0}(A)
\]
is Ulrich with respect to $L$.
\end{theorem}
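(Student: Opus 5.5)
The plan is to read the answer off the cohomology table of Theorem~\ref{thm:cohomology-table}. By definition, $P$ is Ulrich with respect to $L$ exactly when
\[
H^i\bigl(A,P\otimes L^{-j}\bigr)=0
\qquad\text{for all } i \text{ and all } 1\le j\le g .
\]
So it suffices to find one pair $(i,j)$ with $1\le j\le g$ at which this vanishing fails. I will take $j=1$ and $i=g$, that is, the twist $m=-1$ in the top degree.

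First, $g\ge 1$ forces $1\le g$, so the index $j=1$ is always admissible in the Ulrich condition. Next, apply Theorem~\ref{thm:cohomology-table} with $m=-1<0$ and $i=g$. It gives
\[
h^g\bigl(A,P\otimes L^{-1}\bigr)=(-(-1))^g\,h=h=\frac{L^g}{g!}.
\]

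It remains to check that $h\neq 0$. Since $L$ is ample, its top self-intersection $L^g$ is a positive integer, for instance by the Nakai--Moishezon criterion or because $L^{\otimes N}$ is very ample for some $N>0$. Hence $h>0$. Alternatively, by Serre duality (Corollary~\ref{cor:serre-duality}) we have $H^g(A,P\otimes L^{-1})^\vee\simeq H^0(A,P^{-1}\otimes L)$. The bundle $P^{-1}\otimes L$ is ample by Lemma~\ref{lem:pic0-preserves-ampleness} and has no higher cohomology, so $h^0(A,P^{-1}\otimes L)=\chi(A,P^{-1}\otimes L)=h$ by Theorem~\ref{thm:riemann_roch}, and this is positive for the same reason. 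Either way $H^g(A,P\otimes L^{-1})\neq 0$, so the Ulrich condition fails at $(i,j)=(g,1)$ and $P$ is not Ulrich.

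There is no genuine obstacle. The only point needing care is the positivity $L^g>0$, which is standard for an ample line bundle.
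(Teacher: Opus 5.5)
Your proposal is correct and follows the paper's proof exactly: take $j=1$, $i=g$, read off $h^g(A,P\otimes L^{-1})=\chi(A,L)>0$ from the cohomology table, and conclude that the Ulrich vanishing fails. Your justification of $L^g>0$ and the Serre-duality cross-check just make explicit a positivity that the paper asserts without comment.
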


\begin{proof}
The Ulrich condition would require

\[
H^i(A,P\otimes L^{-1})=0
\qquad\text{for every }i.
\]
However, Theorem~\ref{thm:cohomology-table} gives

\[
h^g(A,P\otimes L^{-1})=\chi(A,L)>0.
\]
Thus $P$ is not Ulrich.
\end{proof}

\begin{theorem}
\label{thm:ulrich-twists}
Let

\[
M_a=P\otimes L^{\otimes a},
\qquad a\in\mathbb{Z}.
\]
Then:
\begin{enumerate}
    \item if $g=1$, the bundle $M_a$ is Ulrich if and only if $a=1$;
    \item if $g\geq2$, the bundle $M_a$ is never Ulrich.
\end{enumerate}
\end{theorem}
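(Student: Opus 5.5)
The plan is to read off everything from the cohomology table in Theorem~\ref{thm:cohomology-table}. By definition, $M_a$ is Ulrich exactly when $H^i(A,M_a\otimes L^{-j})=0$ for all $i$ and all $1\le j\le g$. Since $M_a\otimes L^{-j}=P\otimes L^{\otimes(a-j)}$, the condition becomes the vanishing of the entire cohomology of $P\otimes L^{\otimes m}$ for each $m$ in the window $\{a-g,\dots,a-1\}$. The table shows that for $m\neq 0$ some cohomology group has dimension $|m|^g h>0$, where $h=L^g/g!\ge 1$ because $L$ is ample. Only $m=0$ gives total vanishing, by Theorem~\ref{thm:pic0-acyclicity}. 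So $M_a$ is Ulrich if and only if every integer in a window of $g$ consecutive integers equals $0$.

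For $g=1$ the window is the single integer $a-1$. Hence $M_a$ is Ulrich if and only if $a-1=0$, that is, $a=1$. In the converse direction, $M_1\otimes L^{-1}=P$ is acyclic by Theorem~\ref{thm:pic0-acyclicity}. In the forward direction, if $a\neq1$ then the table gives $h^0$ or $h^1$ of $P\otimes L^{\otimes(a-1)}$ equal to $|a-1|\,h>0$.

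For $g\ge2$ the window $\{a-g,\dots,a-1\}$ contains at least two distinct integers, so at least one of them, say $m_0$, is nonzero. The table then gives $h^0$ or $h^g$ of $P\otimes L^{\otimes m_0}$ equal to $|m_0|^g h>0$. This violates the Ulrich condition for every $a$.

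There is no real obstacle here. The two points to check are that $h>0$, which holds because $L$ is ample so $L^g>0$, and that the indexing convention $1\le j\le g$ in the paper's definition of Ulrich is applied consistently.
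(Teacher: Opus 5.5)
Your proposal is correct and takes essentially the same route as the paper. Both rewrite $M_a\otimes L^{-j}$ as $P\otimes L^{\otimes(a-j)}$, use the cohomology table to see that this is acyclic exactly when $a=j$, and note that this can hold for all $1\le j\le g$ only when $g=1$ and $a=1$. You also check explicitly that $h>0$, which the paper leaves implicit.
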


\begin{proof}
The bundle $M_a$ is Ulrich if and only if

\[
H^i\bigl(A,M_a\otimes L^{-j}\bigr)=0
\]
for every $i$ and every $1\leq j\leq g$. Since

\[
M_a\otimes L^{-j}
\simeq
P\otimes L^{\otimes(a-j)},
\]
Theorem~\ref{thm:cohomology-table} shows that this bundle is acyclic
if and only if $a=j$. When $g=1$, this gives $a=1$. When $g\geq2$,
the equalities $a=j$ cannot hold simultaneously for every
$1\leq j\leq g$.
\end{proof}

\section{Existence of Indecomposable Higher Rank ACM Vector Bundles}
In this section we construct indecomposable Arithmetically Cohen-Macaulay (ACM) bundles of every rank $r \ge 2$ by induction, using iterated non-split extensions.

\begin{definition}[Iterated Extension Sequence]\label{def:iter}
Define a sequence of vector bundles $E_r$ (for $r \ge 1$) inductively as follows:
\begin{itemize}
    \item \textbf{Base case:} $E_1 := P$.
    \item \textbf{Inductive step:} For $r \ge 2$, given $E_{r-1}$, choose a nonzero extension class $\xi_r \in \Ext^1_A(P, E_{r-1}) \setminus \{0\}$ (whose existence is guaranteed by Lemma \ref{lem:existence} below). Let $E_r$ be the middle term of the corresponding non-split short exact sequence:
    \begin{equation}\label{eq:ses}
        0 \longrightarrow E_{r-1} \xrightarrow{\iota_r} E_r \xrightarrow{\pi_r} P \longrightarrow 0.
    \end{equation}
\end{itemize}
\end{definition}

\begin{lemma}[Chern Data of $E_r$]\label{lem:chern}
For every $r \ge 1$, $\rk(E_r) = r$ and $c_1(E_r) = 0$ in $NS(A)$.
\end{lemma}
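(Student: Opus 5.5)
We argue by induction on $r$, using that rank and first Chern class are both additive in short exact sequences of vector bundles. For $r=1$ we have $E_1=P$, so $\rk(E_1)=1$. By Lemma~\ref{lem:pic0-numerically-trivial}, $c_1(P)=0$ in $\NS(A)$, which settles the base case.

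For the inductive step, assume $\rk(E_{r-1})=r-1$ and $c_1(E_{r-1})=0$. First we check that $E_r$ is locally free. In the sequence \eqref{eq:ses} the outer terms are locally free. Locally, $\Ext^1(P,E_{r-1})$ vanishes, because $P$ is a line bundle and hence locally free. So the sequence splits locally, and $E_r$ is locally isomorphic to $E_{r-1}\oplus P$. Thus $E_r$ is a vector bundle of rank $(r-1)+1=r$.

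For the Chern class we use the Whitney sum formula $c(E_r)=c(E_{r-1})\,c(P)$, valid for any short exact sequence of vector bundles. Equivalently, taking determinants gives $\det E_r\simeq \det E_{r-1}\otimes P$. Applying $c_1$ and using the induction hypothesis together with Lemma~\ref{lem:pic0-numerically-trivial}, we get $c_1(E_r)=c_1(E_{r-1})+c_1(P)=0$ in $\NS(A)$. In fact the same induction shows $\det E_r\simeq P^{\otimes r}\in\Pic^0(A)$, which gives the vanishing in $\NS(A)=\Pic(A)/\Pic^0(A)$ directly.

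There is essentially no obstacle here. The only points needing care are that $E_r$ is locally free, which makes $\det$ and the Whitney formula legitimate, and that $c_1$ is taken in $\NS(A)$. Both are handled above. The construction in Definition~\ref{def:iter} depends on Lemma~\ref{lem:existence} for the existence of the extension classes, but the computation itself holds for any extension class, split or not.
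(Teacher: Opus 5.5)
Your proof is correct and matches the paper's argument: induction on $r$ with base case $E_1=P$, then additivity of rank and $c_1$ along the defining sequence \eqref{eq:ses}. Your additional checks, that $E_r$ is locally free and that $\det E_r\simeq P^{\otimes r}\in\Pic^0(A)$, spell out points the paper takes for granted.
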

\begin{proof}
We proceed by induction on $r$. 

\textbf{Base case ($r=1$):} By definition, $E_1 = P$. Since $P$ is a line bundle, $\rk(E_1) = 1$. Because $P \in \operatorname{Pic}^{0}(A)$, its first Chern class is numerically trivial, so $c_1(E_1) = 0$.

\textbf{Inductive step:} Assume $\rk(E_{r-1}) = r-1$ and $c_1(E_{r-1}) = 0$. In any short exact sequence of locally free sheaves, rank and first Chern class are additive. Applying this to \eqref{eq:ses} yields:
\begin{align*}
    \rk(E_r) &= \rk(E_{r-1}) + \rk(P) = (r-1) + 1 = r, \\
    c_1(E_r) &= c_1(E_{r-1}) + c_1(P) = 0 + 0 = 0.
\end{align*}
This completes the induction.
\end{proof}

\begin{lemma}[Existence of Non-split Inductive Extensions] \label{lem:existence}
Let $(A, L)$ be a polarized abelian variety of dimension $g \ge 2$ over an algebraically closed field $k$ of characteristic zero, and let $P \in \mathrm{Pic}^0(A) \setminus \{\mathcal{O}_A\}$. For every $r \ge 2$, 
\[
\mathrm{Ext}^1_A(P, E_{r-1}) \cong H^1(A, P^\vee \otimes E_{r-1}) \neq 0.
\]
\end{lemma}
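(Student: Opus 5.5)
The plan is to untwist by $P$ and reduce to a cohomology statement about unipotent bundles. I would then compute that cohomology through the Fourier--Mukai transform, exactly as in the proof of Theorem~\ref{thm:pic0-acyclicity}.

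\textbf{Step 1: reduce to a unipotent bundle.} Since $P$ is locally free, $\Ext^1_A(P,E_{r-1})\cong H^1(A,P^\vee\otimes E_{r-1})$, which is the displayed isomorphism. Set $U_s:=P^\vee\otimes E_s$. Tensoring \eqref{eq:ses} with $P^\vee$ gives non-split sequences
\[
0\to U_{s-1}\to U_s\to \OO_A\to 0 ,
\]
starting from $U_1=\OO_A$. So each $U_s$ is an iterated extension of copies of $\OO_A$ (a unipotent bundle) of rank $s$. The claim becomes $H^1(A,U_{r-1})\neq 0$ for $r\ge 2$, assuming by induction that $E_{r-1}$, hence $U_{r-1}$, has been constructed.

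\textbf{Step 2: compute $\Phi_{\mathcal P}(U_s)$.} By Theorem~\ref{thm:mukai-standard}, $\Phi_{\mathcal P}(\OO_A)\simeq k(0_{\widehat A})[-g]$. Applying the exact functor $\Phi_{\mathcal P}$ to the sequences above gives distinguished triangles
\[
\Phi_{\mathcal P}(U_{s-1})\to\Phi_{\mathcal P}(U_s)\to k(0_{\widehat A})[-g].
\]
By induction on $s$, each $\Phi_{\mathcal P}(U_s)\simeq M_s[-g]$, where $M_s$ is a coherent sheaf of length $s$ supported at $0_{\widehat A}$. The point is that an extension of shifted sheaves concentrated in the same degree is again a sheaf in that degree, supported at the origin. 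In particular $M_{r-1}$ is a nonzero finite-length module over the regular local ring $R=\widehat{\OO}_{\widehat A,0}$ of dimension $g$.

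\textbf{Step 3: identify cohomology with Tor.} As in the proof of Theorem~\ref{thm:pic0-acyclicity}, the derived fibre at the origin gives
\[
\mathbf R\Gamma(A,U)\simeq \Phi_{\mathcal P}(U)\otimes^{\mathbf L}k(0_{\widehat A}).
\]
Taking $U=U_{r-1}$, this yields $H^i(A,U_{r-1})\cong \Tor^R_{g-i}(M_{r-1},k)$. Hence it suffices to show $\Tor^R_{g-1}(M,k)\neq 0$ for every nonzero finite-length $R$-module $M$.

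\textbf{Step 4: the Betti-number argument, which is the main obstacle.} Since $M\neq 0$ has depth $0$ over the regular local ring $R$, Auslander--Buchsbaum gives $\pd_R M=g$. Take a minimal free resolution $0\to F_g\to F_{g-1}\to\cdots\to F_0\to M\to 0$. Its Betti numbers are $\beta_i=\operatorname{rk}F_i=\dim_k\Tor_i^R(M,k)$, and $\beta_g\ge 1$. Exactness at $F_g$ says $F_g\to F_{g-1}$ is injective, and an injective map of free modules over a domain satisfies $\operatorname{rk}F_{g-1}\ge\operatorname{rk}F_g$. Therefore $\beta_{g-1}\ge\beta_g\ge 1$.

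Here $g\ge 2$ is needed so that the index $g-1$ corresponds to $H^1$ rather than $H^0$. Thus $H^1(A,U_{r-1})\ne 0$, and so $\Ext^1_A(P,E_{r-1})\neq 0$. The delicate points to check carefully are that the derived-fibre isomorphism holds as stated, with the sign normalization of Lemma~\ref{lem:tensor-translation-FM} irrelevant because everything here is supported at the origin, and that passing to the completion does not change the Tor groups.
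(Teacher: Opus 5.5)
Your proposal is correct and follows the paper's proof essentially step for step: the $\Ext$--$H^1$ identification, the unipotent bundles $U_s$, the computation $\Phi_{\mathcal P}(U_s)\simeq \FF_s[-g]$, the derived-fibre identification $H^1(A,U_{r-1})\cong\Tor^R_{g-1}(\FF_{r-1},k)$, and Auslander--Buchsbaum plus injectivity of the last map in the minimal resolution to force $\beta_{g-1}\ge\beta_g\ge 1$. The differences are cosmetic, since you work over the completion rather than $\mathcal O_{\widehat A,0}$; however, your closing remark about $g\ge 2$ is off, because $H^1$ corresponds to $\Tor_{g-1}$ for every $g$ and the argument never actually uses $g\ge 2$.
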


\begin{proof}
The proof proceeds in six steps.

\medskip
\noindent\textbf{Step~1: The $\Ext$--$H^1$ isomorphism.}

Since $P$ is locally free of rank one, all higher local $\mathcal{E}xt$
sheaves vanish:
\[
  \mathcal{E}xt^q_{\OA}(P, E_{r-1}) = 0 \quad \text{for all } q > 0,
\]
and $\mathcal{H}om(P, E_{r-1}) \cong P^\vee \otimes E_{r-1}$. The local-to-global Ext spectral sequence therefore gives canonical
isomorphisms
\[
  \Ext^i_A(P, E_{r-1})
  \;\cong\;
  H^i(A,\, P^\vee \otimes E_{r-1})
  \quad \text{for all } i \geq 0.
\]
In particular,
\[
  \Ext^1_A(P, E_{r-1})
  \;\cong\;
  H^1(A,\, P^\vee \otimes E_{r-1}).
\]
It suffices to prove $H^1(A, P^\vee \otimes E_{r-1}) \neq 0$.

\medskip
\noindent\textbf{Step~2: The unipotent bundle $U_s$.}

Define $U_s := P^\vee \otimes E_s$ for each $s \geq 1$.
We claim that $U_s$ is a \emph{unipotent vector bundle of rank $s$},
i.e.\ it admits a filtration
\[
  0 = U_s^{(0)} \subset U_s^{(1)} \subset \cdots
  \subset U_s^{(s)} = U_s
\]
with each successive quotient isomorphic to $\OA$.

We verify this by induction on $s$.
\begin{itemize}
  \item \emph{Base case} ($s=1$): $U_1 = P^\vee \otimes P \cong \OA$.
  \item \emph{Inductive step} ($s \geq 2$): Tensoring \eqref{eq:ses}
        (for index $s$) with the locally free sheaf $P^\vee$ preserves
        exactness, giving
        \begin{equation}\label{eq:Us-seq}
          0 \;\to\; U_{s-1} \;\to\; U_s \;\to\; \OA \;\to\; 0.
        \end{equation}
        By induction $U_{s-1}$ is unipotent of rank $s-1$, so $U_s$
        is unipotent of rank $s$.
\end{itemize}

\medskip
\noindent\textbf{Step~3: Fourier--Mukai transform of $U_s$.}

\begin{claim}\label{claim:FM-Us}
Let $U_s := P^{\vee} \otimes E_s$ for each $s \ge 1$. Then there exists an isomorphism
\[
\Phi_{\mathcal{P}}(U_s) \simeq \FF_s[-g]
\]
in $D^b(\operatorname{Coh} \widehat{A})$, where $\FF_s$ is a non-zero coherent sheaf on $\widehat{A}$ of finite length $s$ whose set-theoretic support is $\{0_{\widehat{A}}\}$.
\end{claim}

\begin{proof}
We argue by induction on $s$.

\paragraph{Base case ($s=1$).}
$U_1 = P^{\vee} \otimes P \cong \mathcal{O}_A$. The Fourier--Mukai transform of the structure sheaf of an abelian variety is the skyscraper sheaf at the origin of the dual (with a cohomological shift):
\[
\Phi_{\mathcal{P}}(\mathcal{O}_A) \simeq k(0_{\widehat{A}})[-g].
\]
Thus $\FF_1 := k(0_{\widehat{A}})$ has length $1$ and support $\{0_{\widehat{A}}\}$.

\paragraph{Inductive step ($s \ge 2$).}
Tensoring the exact sequence $0 \to E_{s-1} \to E_s \to P \to 0$ with the flat sheaf $P^{\vee}$ yields
\[
0 \longrightarrow U_{s-1} \longrightarrow U_s \longrightarrow \mathcal{O}_A \longrightarrow 0.
\]
Applying the exact functor $\Phi_{\mathcal{P}}$ gives a distinguished triangle in $D^b(\widehat{A})$:
\[
\Phi_{\mathcal{P}}(U_{s-1}) \longrightarrow \Phi_{\mathcal{P}}(U_s) \longrightarrow \Phi_{\mathcal{P}}(\mathcal{O}_A) \longrightarrow \Phi_{\mathcal{P}}(U_{s-1})[1].
\]
By the induction hypothesis, this is
\[
\FF_{s-1}[-g] \longrightarrow B \longrightarrow k(0_{\widehat{A}})[-g] \longrightarrow \FF_{s-1}[-g+1],
\]
where $B := \Phi_{\mathcal{P}}(U_s)$.

\subparagraph{Vanishing of $H^q(B)$ for $q \neq g$.}
Since $\FF_{s-1}[-g]$ and $k(0_{\widehat{A}})[-g]$ are concentrated in cohomological degree $g$, the long exact sequence of cohomology sheaves associated to (4) has the following shape:
\[
\cdots \to 0 \to H^q(B) \to 0 \to \cdots \quad\text{for } q \le g-1 \text{ and } q \ge g+2,
\]
which forces $H^q(B)=0$ in those ranges. At $q=g+1$ the relevant segment is
\[
k(0_{\widehat{A}}) \to 0 \to H^{g+1}(B) \to 0,
\]
and exactness at $H^{g+1}(B)$ gives $H^{g+1}(B)=0$. At $q=g$ we obtain the short exact sequence
\[
0 \to \FF_{s-1} \to H^g(B) \to k(0_{\widehat{A}}) \to 0. 
\]
Thus $H^q(B)=0$ for all $q \neq g$.

\subparagraph{Identification of $B$.}
Since $B$ has cohomology only in degree $g$, it is quasi-isomorphic to its sole cohomology sheaf placed in that degree:
\[
B \simeq H^g(B)[-g].
\]
Define $\FF_s := H^g(B)$. Then $B \simeq \FF_s[-g]$, i.e.
\[
\Phi_{\mathcal{P}}(U_s) \simeq F_s[-g].
\]

\subparagraph{Length and support.}
From the short exact sequence (5) we obtain
\[
0 \longrightarrow \FF_{s-1} \longrightarrow \FF_s \longrightarrow k(0_{\widehat{A}}) \longrightarrow 0.
\]
Additivity of length gives
\[
\ell(\FF_s) = \ell(\FF_{s-1}) + \ell\bigl(k(0_{\widehat{A}})\bigr) = (s-1) + 1 = s.
\]
Moreover,
\[
\operatorname{Supp}(\FF_s) \subseteq \operatorname{Supp}(\FF_{s-1}) \cup \operatorname{Supp}\bigl(k(0_{\widehat{A}})\bigr) = \{0_{\widehat{A}}\}.
\]
Since $s \ge 1$, we have $\FF_s \neq 0$, and therefore $\operatorname{Supp}(\FF_s) = \{0_{\widehat{A}}\}$.

This completes the induction.
\end{proof}

\medskip
\noindent\textbf{Step~4: Cohomology via the derived fiber formula.}

The derived fiber of $\Phi_{\mathcal P}(\mathcal{G})$ at a closed point
$\beta \in \wA$ is
\[
  \Phi_{\mathcal P}(\mathcal{G}) \otimes^{\mathbf{L}}_{\mathcal{O}_{\wA}}
  k(\beta) \;\simeq\; \textbf{R}\Gamma(A,\, \mathcal{G} \otimes P_\beta).
\]
Taking $\beta = 0_{\wA}$ and using $P_0 \cong \OA$:
\[
  \textbf{R}\Gamma(A,\, \mathcal{G})
  \;\simeq\;
  \Phi_{\mathcal P}(\mathcal{G})
  \otimes^{\mathbf{L}}_{\mathcal{O}_{\wA}} k(0_{\wA}).
\]
Applying this to $\mathcal{G} = U_{r-1}$ and substituting
Claim~\ref{claim:FM-Us}:
\[
  \textbf{R}\Gamma(A,\, U_{r-1})
  \;\simeq\;
  \FF_{r-1}[-g]
  \otimes^{\mathbf{L}}_{\mathcal{O}_{\wA}} k(0_{\wA}).
\]
Let $R := \mathcal{O}_{\wA, 0}$ be the local ring at $0_{\wA}$ and
$\kappa := R/\mathfrak{m}_0 \cong k$ its residue field.
Since $\FF_{r-1}$ has finite-length support at $0_{\wA}$, it
corresponds to a finite-length $R$-module, and the derived tensor
product is computed entirely over $R$:
\[
  \textbf{R}\Gamma(A,\, U_{r-1})
  \;\simeq\;
  \FF_{r-1} \otimes^{\mathbf{L}}_R \kappa\,[-g].
\]
Passing to cohomology:
\begin{equation}\label{eq:Hi-Tor}
  H^i(A,\, U_{r-1})
  \;\cong\;
  \Tor^R_{g-i}(\FF_{r-1},\, \kappa)
  \quad \text{for all } i.
\end{equation}
Setting $i = 1$:
\[
  H^1(A,\, P^\vee \otimes E_{r-1})
  \;=\;
  H^1(A,\, U_{r-1})
  \;\cong\;
  \Tor^R_{g-1}(\FF_{r-1},\, \kappa).
\]

\medskip
\noindent\textbf{Step~5: Nonvanishing via the Auslander--Buchsbaum
formula and the minimal free resolution.}

Since $\wA$ is a smooth abelian variety of dimension~$g$, the local
ring $R = \mathcal{O}_{\wA, 0}$ is a \emph{regular local ring of
dimension~$g$}. The module $\FF_{r-1}$ is a nonzero finitely generated
$R$-module of finite length $\ell(\FF_{r-1}) = r - 1 \geq 1$.
A nonzero finite-length module over a positive-dimensional local ring
has depth zero:
\[
  \depth_R(\FF_{r-1}) = 0.
\]
Since $R$ is regular local (hence Cohen--Macaulay), every finitely
generated $R$-module has finite projective dimension, so the
\textbf{Auslander--Buchsbaum formula} applies:
\[
  \pd_R(\FF_{r-1}) + \depth_R(\FF_{r-1}) = \depth(R) = g,
\]
giving $\pd_R(\FF_{r-1}) = g$.

Let
\[
  0 \;\to\; R^{\beta_g}
    \;\xrightarrow{d_g}\; R^{\beta_{g-1}}
    \;\to\; \cdots
    \;\to\; R^{\beta_1}
    \;\xrightarrow{d_1}\; R^{\beta_0}
    \;\to\; \FF_{r-1}
    \;\to\; 0
\]
be the \emph{minimal free resolution} of $\FF_{r-1}$ over $R$.
Minimality means every differential $d_j$ has its matrix entries
in $\mathfrak{m}_0$. In particular, since $\pd_R(\FF_{r-1}) = g$,
we have
\[
  \beta_g \;\neq\; 0.
\]

Exactness of the resolution at the leftmost term $R^{\beta_g}$ requires the map $d_g \colon R^{\beta_g} \to R^{\beta_{g-1}}$ to be injective. Because the resolution is minimal, the image of $d_g$ is contained in $\mathfrak{m}_0 R^{\beta_{g-1}}$. If $\beta_{g-1} = 0$, then $R^{\beta_{g-1}} = 0$, which forces $d_g$ to be the zero map. An injective map from the non-zero free module $R^{\beta_g}$ to zero is impossible, which contradicts $\beta_g \ne 0$. Therefore:
$$ \beta_{g-1} \ne 0. $$

Since the resolution is minimal, tensoring with $\kappa = R/\mathfrak{m}_0$
kills all differentials, yielding
\[
  \Tor^R_j(\FF_{r-1},\, \kappa)
  \;\cong\;
  \kappa^{\beta_j}
  \quad \text{for all } 0 \leq j \leq g.
\]
In particular,
\[
  \Tor^R_{g-1}(\FF_{r-1},\, \kappa)
  \;\cong\;
  \kappa^{\beta_{g-1}}
  \;\neq\; 0.
\]

\medskip
\noindent\textbf{Step~6: Conclusion.}

Combining the results of Steps 1--5:
\[
  \Ext^1_A(P,\, E_{r-1})
  \;\cong\;
  H^1(A,\, P^\vee \otimes E_{r-1})
  \;=\;
  H^1(A,\, U_{r-1})
  \;\cong\;
  \Tor^R_{g-1}(\FF_{r-1},\, \kappa)
  \;\neq\; 0.
\]
Hence there exists a nonzero class $\xi_r \in \Ext^1_A(P, E_{r-1})
\setminus \{0\}$, and the corresponding non-split extension
\[
  0 \;\to\; E_{r-1} \;\to\; E_r \;\to\; P \;\to\; 0
\]
exists at every inductive step $r \geq 2$.
\end{proof}

\begin{proposition}[Global sections of $\mathcal{H}om(P, E_r)$]
\label{prop:hom-Er-P}
Let $(A, L)$ be a polarized abelian variety of dimension $g \geq 2$ over an
algebraically closed field $k$ of characteristic zero, and let
$P \in \mathrm{Pic}^0(A) \setminus \{\mathcal{O}_A\}$. For every integer
$r \geq 1$, set $U_r := P^{\vee} \otimes E_r$. Then
\[
  h^0(A, U_r) \;=\; 1.
\]
\end{proposition}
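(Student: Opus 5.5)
The plan is to argue by induction on $r$, using the long exact cohomology sequence of the extension \eqref{eq:Us-seq} and the fact that its connecting homomorphism is given by the extension class. For the base case $r=1$, we have $U_1=P^\vee\otimes P\cong\OA$, so $h^0(A,U_1)=h^0(A,\OA)=1$ because $A$ is a connected projective variety.

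For the inductive step, let $r\ge 2$ and assume $h^0(A,U_{r-1})=1$. Tensoring \eqref{eq:ses} with $P^\vee$ gives the sequence \eqref{eq:Us-seq},
\[
0\to U_{r-1}\to U_r\to \OA\to 0 .
\]
Its long exact cohomology sequence begins
\[
0\to H^0(A,U_{r-1})\to H^0(A,U_r)\to H^0(A,\OA)\xrightarrow{\ \delta\ } H^1(A,U_{r-1})\to\cdots.
\]
Since $H^0(A,\OA)=k$ is one-dimensional, it suffices to show that $\delta\neq 0$. Then $\delta$ is injective, so the map $H^0(A,U_r)\to H^0(A,\OA)$ is zero. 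This gives $H^0(A,U_{r-1})\cong H^0(A,U_r)$, and hence $h^0(A,U_r)=1$.

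The key step, and the one needing care, is to identify $\delta(1)$. I would use the standard fact that for an extension $0\to \mathcal G\to\mathcal E\to\OA\to 0$, the coboundary $\delta(1)\in H^1(A,\mathcal G)=\Ext^1_A(\OA,\mathcal G)$ is exactly the Yoneda class of the extension. This can be seen by applying $\operatorname{Hom}(\OA,-)$ and noting that the connecting map sends $\mathrm{id}$ to the extension class. So $\delta(1)$ is the class of \eqref{eq:Us-seq}.

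Next, tensoring with the line bundle $P^\vee$ is an autoequivalence of $\Coh A$. It therefore induces an isomorphism
\[
\Ext^1_A(P,E_{r-1})\xrightarrow{\sim}\Ext^1_A(\OA,U_{r-1}),
\]
and this isomorphism sends the class $\xi_r$ to the class of \eqref{eq:Us-seq}. It is compatible with the isomorphism $\Ext^1_A(P,E_{r-1})\cong H^1(A,P^\vee\otimes E_{r-1})$ of Step~1 of Lemma~\ref{lem:existence}. Since $\xi_r\neq 0$ by construction (Definition~\ref{def:iter}, using Lemma~\ref{lem:existence}), we get $\delta(1)\neq 0$, which completes the induction.

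As a cross-check, by \eqref{eq:Hi-Tor} the statement is equivalent to $\Tor^R_g(\FF_r,\kappa)\cong\operatorname{Hom}_R(\kappa,\FF_r)$ being one-dimensional, i.e. $\FF_r$ has simple socle. This is consistent with $\FF_r$ being a non-split iterated extension of $k(0_{\wA})$.
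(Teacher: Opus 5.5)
Your proof is correct and is essentially the paper's argument. The paper applies $\mathrm{Hom}_A(P,-)$ to $0\to E_{r-1}\to E_r\to P\to 0$ and shows $(\pi_r)_*\colon \mathrm{Hom}_A(P,E_r)\to\mathrm{Hom}_A(P,P)$ is zero, since a preimage of $\mathrm{id}_P$ would split the sequence; by exactness and $\dim\mathrm{Hom}_A(P,P)=1$, this is equivalent to your statement that $\delta(1)$, the extension class, is nonzero, and the only difference is that you work with $H^0$ on the $U_r$ side via the connecting map rather than with a splitting on the $E_r$ side.
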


\begin{proof}
We proceed by induction on $r$.

\smallskip\noindent\textit{Base case} $(r = 1)$.
By definition $E_1 = P$, so $U_1 = P^{\vee} \otimes P \cong \mathcal{O}_A$.
Since $A$ is a proper variety over an algebraically closed field,
$h^0(A, \mathcal{O}_A) = 1$.

\smallskip\noindent\textit{Inductive step} $(r \geq 2)$.
Assume $h^0(A, U_{r-1}) = 1$.  The defining non-split extension
\[
  0 \longrightarrow E_{r-1} \xrightarrow{\iota_r} E_r \xrightarrow{\pi_r}
  P \longrightarrow 0
\]
yields, after tensoring with $P^{\vee}$, the exact sequence of unipotent bundles
(see Lemma~\ref{lem:existence}, Step 2)
\begin{equation}\label{seq:Us}
  0 \longrightarrow U_{r-1} \longrightarrow U_r \longrightarrow \mathcal{O}_A
  \longrightarrow 0.
\end{equation}
We apply the left-exact functor $\mathrm{Hom}_A(P, -)$ to the sequence
$0 \to E_{r-1} \to E_r \to P \to 0$, obtaining the long exact sequence
\begin{equation}\label{les:hom}
  0 \to \mathrm{Hom}_A(P, E_{r-1})
    \xrightarrow{(\iota_r)_*}
    \mathrm{Hom}_A(P, E_r)
    \xrightarrow{(\pi_r)_*}
    \mathrm{Hom}_A(P, P)
    \xrightarrow{\delta}
    \mathrm{Ext}^1_A(P, E_{r-1})
    \to \cdots
\end{equation}

\smallskip\noindent\textbf{Claim:} The map $(\pi_r)_* \colon
\mathrm{Hom}_A(P, E_r) \to \mathrm{Hom}_A(P, P)$ is the zero map.

\smallskip\noindent\textit{Proof of Claim.}
Since $P$ is a line bundle on a proper variety,
$\mathrm{Hom}_A(P, P) \cong k \cdot \mathrm{id}_P$.
If $(\pi_r)_*$ were nonzero, it would be surjective, so there would
exist $\varphi \in \mathrm{Hom}_A(P, E_r)$ with $\pi_r \circ \varphi = \mathrm{id}_P$.
This provides a right splitting of the extension, contradicting the
assumption that the extension class $\xi_r \neq 0$ is non-split.
Hence $(\pi_r)_* = 0$.
\hfill$\square$

\smallskip
By exactness of \eqref{les:hom} at $\mathrm{Hom}_A(P, E_r)$, the map
$(\iota_r)_*$ is surjective, so there is an isomorphism
\[
  \mathrm{Hom}_A(P, E_{r-1}) \xrightarrow{\;\sim\;} \mathrm{Hom}_A(P, E_r).
\]
Under the identification $\mathrm{Hom}_A(P, E_r) \cong H^0(A, U_r)$, this gives
$h^0(A, U_r) = h^0(A, U_{r-1}) = 1$ by the inductive hypothesis.
\end{proof}

\begin{lemma}[Indecomposability of $E_r$]\label{lem:indecomposable-Er}
For every integer $r \geq 1$, the vector bundle $E_r$ is indecomposable.
\end{lemma}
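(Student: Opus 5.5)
Since tensoring with the line bundle $P^{\vee}$ is an autoequivalence of $\Coh A$ preserving direct sum decompositions, $E_r$ is indecomposable if and only if $U_r = P^{\vee}\otimes E_r$ is. So I will argue with $U_r$ and combine Proposition~\ref{prop:hom-Er-P} with the unipotent filtration from Lemma~\ref{lem:existence}, Step~2. The case $r=1$ is trivial, since a line bundle is indecomposable. For $r\ge 2$, suppose $U_r \cong V\oplus W$ with $V,W$ nonzero vector bundles.

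The first step is to show that every direct summand of a unipotent bundle is again unipotent, with both summands of positive rank. I plan to do this through the Fourier--Mukai picture. By Claim~\ref{claim:FM-Us}, $\Phi_{\mathcal P}(U_r)\simeq \FF_r[-g]$ with $\FF_r$ of finite length supported at $0_{\wA}$. Since $\Phi_{\mathcal P}$ is additive, $\Phi_{\mathcal P}(V)\oplus\Phi_{\mathcal P}(W)\simeq \FF_r[-g]$. Hence each of $\Phi_{\mathcal P}(V)$ and $\Phi_{\mathcal P}(W)$ is a shift by $[-g]$ of a direct summand $\mathcal G,\mathcal H$ of $\FF_r$, because cohomology sheaves split along direct sums. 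Both $\mathcal G$ and $\mathcal H$ are nonzero, since $\Phi_{\mathcal P}$ is an equivalence, and both are supported at $0_{\wA}$.

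The second step is to use the computation of Step~4 of Lemma~\ref{lem:existence}: $H^0(A,\mathcal G')\cong \Tor^R_g(\mathcal M,\kappa)$ whenever $\Phi_{\mathcal P}(\mathcal G')\simeq \mathcal M[-g]$ with $\mathcal M$ of finite length at the origin. By Step~5 (Auslander--Buchsbaum), any nonzero finite-length $R$-module has projective dimension $g$, so $\beta_g\neq 0$ and this $\Tor$ is nonzero. Applying this to $V$ and to $W$ gives $h^0(A,V)\ge 1$ and $h^0(A,W)\ge 1$, and therefore
\[
h^0(A,U_r)=h^0(A,V)+h^0(A,W)\ge 2.
\]
This contradicts Proposition~\ref{prop:hom-Er-P}, which says $h^0(A,U_r)=1$. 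So $U_r$, and hence $E_r$, is indecomposable.

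The main obstacle I expect is justifying that each summand's transform is a sheaf in the single degree $g$ and is supported at $0_{\wA}$. I will argue that a direct summand of a complex concentrated in degree $g$ is concentrated in degree $g$, because cohomology commutes with direct sums. Its cohomology sheaf is then a direct summand of $\FF_r$, hence of finite length and supported at the origin. As an alternative route, I would note that $\mathrm{End}(U_r)\cong\mathrm{End}_R(\FF_r)$ by full faithfulness. Indecomposability of $\FF_r$ would then follow from cyclicity: $\FF_r\otimes\kappa\cong\Tor_0^R(\FF_r,\kappa)\cong H^g(A,U_r)\cong H^0(A,U_r)^\vee$ by Serre duality, which is one-dimensional. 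So $\FF_r$ is cyclic, i.e.\ of the form $R/I$, and its endomorphism ring is local.
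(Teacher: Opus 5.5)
Your main argument is correct and is essentially the paper's proof. The paper first shows $\FF_r$ is indecomposable, using $\beta_g(\FF_r)=h^0(A,U_r)=1$ and the fact from Auslander--Buchsbaum that every nonzero finite-length summand has $\beta_g\ge 1$. It then moves a splitting of $U_r$ across $\Phi_{\mathcal P}$ via cohomology sheaves. You move the splitting across first and count $h^0$ of the summands on $A$, which is the same computation in the opposite order.

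You should discard the alternative route, because it fails at two points.

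\begin{enumerate}
\item It misapplies Serre duality. The correct statement is $H^g(A,U_r)\cong H^0(A,U_r^{\vee})^{\vee}$, not $H^0(A,U_r)^{\vee}$.
\item Its conclusion that $\FF_r$ is cyclic is false in general. Since $\Tor^R_g(\FF_r,\kappa)\cong(0:_{\FF_r}\mathfrak{m})$, the equality $h^0(A,U_r)=1$ only says that $\FF_r$ has simple socle. For a counterexample, take $g=2$ and let $x,y$ be a regular system of parameters. The module with basis $e,u,v$, where $xu=yv=e$, $xv=yu=0$ and $\mathfrak{m}e=0$, has socle $ke$ but needs two generators. It arises as an $\FF_3$ through the non-split chain $ke\subset ke+ku\subset M$.
\end{enumerate}

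What actually gives indecomposability is the simple socle, not cyclicity. That is exactly the content of your main argument.
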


\begin{proof}
We pass to the Fourier--Mukai side and show indecomposability there.

\smallskip\noindent\textbf{Step 1: The finite-length module $\FF_r$.}
By Claim~\ref{claim:FM-Us}, the Fourier–Mukai transform satisfies
$\Phi_{\mathcal{P}}(U_r) \simeq \FF_r[-g]$ in $D^b(\widehat{A})$,
where $\FF_r$ is a coherent sheaf of finite length $r$ supported
set-theoretically at $\{0_{\widehat{A}}\}$.
Let $R := \mathcal{O}_{\widehat{A},\, 0}$ be the regular local ring of dimension $g$
at the origin of $\widehat{A}$, with maximal ideal $\mathfrak{m}$ and residue
field $\kappa = R/\mathfrak{m} \cong k$.
We regard $\FF_r$ as a finitely generated $R$-module of finite length $r$.

\smallskip\noindent\textbf{Step 2: The Betti number $\beta_g(\FF_r)$.}
By the derived fiber formula (Claim~\ref{claim:FM-Us}, Step 4),
\[
  H^i(A, U_r) \;\cong\; \mathrm{Tor}_{g-i}^R(\FF_r, \kappa)
  \quad\text{for all }i,
\]
and in particular
\[
  \beta_g(\FF_r) := \dim_k \mathrm{Tor}_g^R(\FF_r, \kappa)
                  = \dim_k H^0(A, U_r).
\]
By Proposition~\ref{prop:hom-Er-P}, $h^0(A, U_r) = 1$, so
\begin{equation}\label{eq:beta0}
  \beta_g(\FF_r) = 1.
\end{equation}

\smallskip\noindent\textbf{Step 3: Indecomposability of $\FF_r$.}
Suppose for contradiction that $\FF_r \cong M \oplus N$ for nonzero
$R$-modules $M$ and $N$.
Since $\FF_r$ has finite length, both $M$ and $N$ have finite length
and hence depth zero over $R$.  By the Auslander-Buchsbaum formula over
the regular local ring $R$ of depth $g$,
\[
  \mathrm{pd}_R(M) = g, \qquad \mathrm{pd}_R(N) = g.
\]
In particular $M \neq 0$ and $N \neq 0$ force
$\beta_g(M) \geq 1$ and $\beta_g(N) \geq 1$. By additivity of the $g$-th
Betti number,
\[
  \beta_g(\FF_r) = \beta_g(M) + \beta_g(N) \;\geq\; 2.
\]
This contradicts \eqref{eq:beta0}. Therefore $\FF_r$ is indecomposable
as an $R$-module.

\smallskip\noindent\textbf{Step 4: Descent to $E_r$.}
Since $\Phi_{\mathcal{P}}$ is an equivalence of categories, it reflects
direct-sum decompositions. An isomorphism $U_r \cong V_1 \oplus V_2$
would give $\Phi_{\mathcal{P}}(U_r) \cong \Phi_{\mathcal{P}}(V_1) \oplus
\Phi_{\mathcal{P}}(V_2)$, i.e.,  
\[
\FF_r[-g]\cong \Phi_{\mathcal{P}}(V_1)\oplus \Phi_{\mathcal{P}}(V_2)
\]
in \(D^{b}(\widehat{A})\). Since \(\FF_r[-g]\) has nonzero cohomology only in
degree \(g\), taking cohomology sheaves yields
\[
\mathcal{H}^{i}(\Phi_{\mathcal{P}}(V_1))=\mathcal{H}^{i}(\Phi_{\mathcal{P}}(V_2))=0
\qquad\text{for } i\neq g,
\]
and
\[
\mathcal{H}^{g}(\FF_r[-g])
   \cong \mathcal{H}^{g}(\Phi_{\mathcal{P}}(V_1))\oplus \mathcal{H}^{g}(\Phi_{\mathcal{P}}(V_2)).
\]
Hence
\[
\Phi_{\mathcal{P}}(V_1)\cong \mathcal{H}^{g}(\Phi_{\mathcal{P}}(V_1))[-g],
\qquad
\Phi_{\mathcal{P}}(V_2)\cong \mathcal{H}^{g}(\Phi_{\mathcal{P}}(V_2))[-g].
\]
Setting
\[
\Phi_{\mathcal{P}}(V_1)_g:=\mathcal{H}^{g}(\Phi_{\mathcal{P}}(V_1)),
\qquad
\Phi_{\mathcal{P}}(V_2)_g:=\mathcal{H}^{g}(\Phi_{\mathcal{P}}(V_2)),
\]
we obtain
\[
\FF_r \cong \Phi_{\mathcal{P}}(V_1)_g\oplus \Phi_{\mathcal{P}}(V_2)_g.
\]
Since \(\FF_r\) is indecomposable, either \(\Phi_{\mathcal{P}}(V_1)_g=0\) or \(\Phi_{\mathcal{P}}(V_2)_g=0\). Therefore
either \(\Phi_{\mathcal{P}}(V_1)=0\) or \(\Phi_{\mathcal{P}}(V_2)=0\), showing that \(\FF_r[-g]\) is indecomposable in
\(D^{b}(\widehat{A})\).

Applying the equivalence, it follows that \(U_r\) is
indecomposable. Consequently,
\[
E_r=P\otimes U_r
\]
is also indecomposable, since tensoring by a line bundle is an
autoequivalence of the category of coherent sheaves.
\end{proof}
\begin{lemma}[ACM property of $E_r$] \label{lem:ACM_Er}
For every integer $r \ge 1$, the vector bundle $E_r$ is arithmetically Cohen-Macaulay with respect to $L$. Furthermore, for the untwisted case, $H^i(A, E_r) = 0$ for all $0 \le i \le g$.
\end{lemma}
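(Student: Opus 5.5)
Argue by induction on $r$, using the defining extensions \eqref{eq:ses} together with the cohomology table of Theorem~\ref{thm:cohomology-table}. Since $E_1=P$, the base case is Theorem~\ref{thm:main} for the ACM property and Theorem~\ref{thm:pic0-acyclicity} for the untwisted acyclicity.

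For the inductive step, fix $m\in\mathbb Z$ and tensor \eqref{eq:ses} with the locally free sheaf $L^{\otimes m}$. This gives
\[
0\to E_{r-1}\otimes L^{\otimes m}\to E_r\otimes L^{\otimes m}\to P\otimes L^{\otimes m}\to 0 .
\]
The associated long exact cohomology sequence contains the segment
\[
H^i(A,E_{r-1}\otimes L^{\otimes m})\to H^i(A,E_r\otimes L^{\otimes m})\to H^i(A,P\otimes L^{\otimes m}).
\]
For $0<i<g$ the left term vanishes by the inductive hypothesis, and the right term vanishes by Theorem~\ref{thm:main}. Exactness in the middle then forces $H^i(A,E_r\otimes L^{\otimes m})=0$, so $E_r$ is ACM. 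For $m=0$ the same segment applies for every $0\le i\le g$: the left term vanishes by the strengthened inductive hypothesis (full acyclicity of $E_{r-1}$), and the right term vanishes by Theorem~\ref{thm:pic0-acyclicity}. Hence $H^i(A,E_r)=0$ for all $i$. To make this go through, I will carry both assertions, the ACM property and the untwisted acyclicity, in the induction together.

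No genuine obstacle is expected: the argument is a two-out-of-three property for vanishing in a long exact sequence. The points that need care are, first, that tensoring with $L^{\otimes m}$ preserves exactness because $L^{\otimes m}$ is locally free, and second, that the induction must track the untwisted acyclicity and not only the ACM range $0<i<g$; otherwise the statement at $i=0,g$ for $m=0$ would not propagate.

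As a cross-check, acyclicity can also be seen via the Fourier--Mukai transform. Write $E_r=P\otimes U_r$, so that $\mathbf R\Gamma(A,E_r)$ is the derived fiber of $\Phi_{\mathcal P}(U_r)$ at the point corresponding to $P$. By Claim~\ref{claim:FM-Us} this object is $\FF_r[-g]$ (up to the sign convention of Lemma~\ref{lem:tensor-translation-FM}), and $\FF_r$ is supported only at $0_{\widehat A}\neq$ that point, so the derived fiber vanishes. I will present the elementary inductive argument and mention this as a remark.
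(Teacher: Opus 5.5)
Your proposal is correct and follows the paper's proof: induct on $r$ using the defining extension twisted by $L^{\otimes m}$, kill the left term by the inductive hypothesis and the right term by Theorem~\ref{thm:main} (respectively Theorem~\ref{thm:pic0-acyclicity} when $m=0$), and carry the untwisted acyclicity along in the induction. Your Fourier--Mukai cross-check is also valid, since $\FF_r$ is supported only at the origin while $P$ corresponds to a nonzero point of $\widehat A$; the paper does not use it for this lemma.
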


\begin{proof}
We proceed by induction on $r$.

\noindent \textbf{Base case ($r=1$):} By definition, $E_1 = P \in \mathrm{Pic}^0(A) \setminus \{\mathcal{O}_A\}$. By Theorem~\ref{thm:main}, $P$ is ACM with respect to $L$, hence $H^i(A, P \otimes L^{\otimes m}) = 0$ for all $m \in \mathbb{Z}$ and $0 < i < g$. Furthermore, the Fourier--Mukai acyclicity established in
Theorem~\ref{thm:pic0-acyclicity} yields
\[
H^{i}(A,P)=0
\qquad
\text{for all }0\leq i\leq g.
\]

\noindent \textbf{Inductive step ($r \ge 2$):} Assume that the statement holds for $E_{r-1}$. Tensoring the defining short exact sequence 
\[
0 \longrightarrow E_{r-1} \longrightarrow E_r \longrightarrow P \longrightarrow 0
\]
with the locally free sheaf $L^{\otimes m}$ yields the exact sequence
\[
0 \longrightarrow E_{r-1} \otimes L^{\otimes m} \longrightarrow E_r \otimes L^{\otimes m} \longrightarrow P \otimes L^{\otimes m} \longrightarrow 0.
\]
The associated long exact sequence in cohomology contains the segment:
\[
\dots \longrightarrow H^i(A, E_{r-1} \otimes L^{\otimes m}) \longrightarrow H^i(A, E_r \otimes L^{\otimes m}) \longrightarrow H^i(A, P \otimes L^{\otimes m}) \longrightarrow \dots
\]
For any $m \in \mathbb{Z}$ and $0 < i < g$, the left term vanishes by the inductive hypothesis, and the right term vanishes because $P$ is ACM. Exactness therefore forces the middle term to vanish:
\[
H^i(A, E_r \otimes L^{\otimes m}) = 0.
\]
To verify total vanishing for the untwisted case ($m=0$), we examine the long exact sequence at degree $i$ for the full range $0 \le i \le g$:
\[
\dots \longrightarrow H^i(A, E_{r-1}) \longrightarrow H^i(A, E_r) \longrightarrow H^i(A, P) \longrightarrow \dots
\]
The left term vanishes by the inductive hypothesis, and the right term
vanishes by Theorem~\ref{thm:pic0-acyclicity}. This completes the induction.
\end{proof}

\begin{theorem}[Indecomposable Rank-$r$ ACM Bundle]\label{thm:main2}
Let $(A, L)$ be a polarized abelian variety of dimension $g \ge 2$ over an algebraically closed field $k$ of characteristic zero. For every $P \in \operatorname{Pic}^{0}(A) \setminus \{\cO_A\}$ and every $r \ge 1$, the bundle $E_r$ of Definition \ref{def:iter} satisfies:
\begin{enumerate}
    \item $\rk(E_r) = r$ and $E_r$ is indecomposable;
    \item $c_1(E_r) = 0$ in $NS(A)$;
    \item $E_r$ is ACM with respect to $L$: $H^i(A, E_r \otimes L^m) = 0$ for all $m \in \mathbb{Z}$ and $0 < i < g$.
\end{enumerate}
\end{theorem}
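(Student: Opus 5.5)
The theorem is an assembly of the lemmas already proved in this section, so I will prove it by citing them in order. First I need the bundles $E_r$ of Definition~\ref{def:iter} to exist for every $r\ge 1$. I will argue by induction on $r$. For $r=1$ we have $E_1=P$. For $r\ge 2$, Lemma~\ref{lem:existence} (which uses $g\ge 2$ and $P\neq\mathcal{O}_A$) gives $\Ext^1_A(P,E_{r-1})\neq 0$. So a non-split class $\xi_r$ can be chosen and $E_r$ is defined. All later statements refer to this fixed sequence of choices.

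For part (1) and part (2), Lemma~\ref{lem:chern} gives $\rk(E_r)=r$ and $c_1(E_r)=0$ in $\NS(A)$ directly by additivity along \eqref{eq:ses}. Indecomposability is Lemma~\ref{lem:indecomposable-Er}. I will recall the chain it rests on, since this is the only substantive step:
\begin{enumerate}
\item Proposition~\ref{prop:hom-Er-P} gives $h^0(A,U_r)=1$, because the non-splitness of each extension kills the map $(\pi_r)_*$.
\item Via Claim~\ref{claim:FM-Us} and the derived fiber formula, this equals $\beta_g(\FF_r)=1$ for the finite-length $R$-module $\FF_r$.
\item Any nontrivial decomposition $\FF_r\cong M\oplus N$ would give $\beta_g\ge 2$, by Auslander--Buchsbaum applied to each summand.
\item Since $\Phi_{\mathcal P}$ is an equivalence and tensoring with $P$ is an autoequivalence, indecomposability of $\FF_r$ transfers to $U_r$ and then to $E_r=P\otimes U_r$.
\end{enumerate}

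For part (3), Lemma~\ref{lem:ACM_Er} gives the ACM property. Its proof is an induction using the long exact sequence of \eqref{eq:ses} twisted by $L^{\otimes m}$. The outer terms vanish for $0<i<g$ by the inductive hypothesis and by Theorem~\ref{thm:main}, so the middle term vanishes as well.

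The main obstacle is the indecomposability in part (1), and within it the identification of $\beta_g(\FF_r)$ with $h^0(A,U_r)$ and the computation $h^0=1$. Once Proposition~\ref{prop:hom-Er-P} and Claim~\ref{claim:FM-Us} are granted, the rest is bookkeeping. The write-up will therefore state that (1)--(3) follow from Lemmas~\ref{lem:existence}, \ref{lem:chern}, \ref{lem:indecomposable-Er} and~\ref{lem:ACM_Er}.
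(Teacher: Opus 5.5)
Your proposal is correct and follows the paper's proof essentially verbatim. The paper also proves the theorem by combining three lemmas: Lemma~\ref{lem:chern} for rank and $c_1$, Lemma~\ref{lem:indecomposable-Er} for indecomposability (via $h^0(A,U_r)=\beta_g(\FF_r)=1$), and Lemma~\ref{lem:ACM_Er} for the ACM property, with the non-split classes $\xi_r$ supplied by Lemma~\ref{lem:existence} within Definition~\ref{def:iter}.
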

\begin{proof}
The theorem follows from our preceding lemmas:
\begin{enumerate}
    \item Rank and indecomposability are established by Lemma~\ref{lem:chern} and Lemma~\ref{lem:indecomposable-Er}, respectively.
    \item The vanishing of the first Chern class is given by Lemma~\ref{lem:chern}.
    \item The ACM property is verified directly by Lemma \ref{lem:ACM_Er}, which demonstrates that $H^i(A, E_r \otimes L^{\otimes m}) = 0$ for all $m \in \mathbb{Z}$ and $0 < i < g$.
\end{enumerate}
\end{proof}
\begin{proposition}[Semistability of $E_r$] \label{lem:semistability_Er}
For every integer $r \ge 1$, the vector bundle $E_r$ is semistable with respect to the polarization $L$, with slope $\mu_L(E_r) = 0$. Furthermore, for $r \ge 2$, $E_r$ is strictly semistable.
\end{proposition}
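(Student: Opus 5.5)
Semistability will follow from the fact that $E_r$ is an iterated extension of copies of the slope-zero stable line bundle $P$, and strict semistability from the presence of the subsheaf $E_1 = P \hookrightarrow E_r$ of the same slope. The slope is immediate from Lemma~\ref{lem:chern}: $c_1(E_r)=0$ in $\NS(A)$, so $\deg_L(E_r)=c_1(E_r)\cdot L^{g-1}=0$ and $\mu_L(E_r)=0$.

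For semistability I will induct on $r$, using that the class of semistable sheaves of a fixed slope $\mu$ is closed under extensions. The base case $r=1$ holds because $E_1=P$ is a line bundle, hence stable (Corollary~\ref{cor:stable-rank-one}), of slope $0$. For the inductive step, let $F\subset E_r$ be a nonzero saturated subsheaf with $0<\rk F<r$, and consider the sequence \eqref{eq:ses}. Put $F'=F\cap E_{r-1}$ and let $F''=\pi_r(F)\subset P$. Then $0\to F'\to F\to F''\to 0$ is exact. There are two cases for the rank-one piece. If $F''=0$, then $F=F'\subset E_{r-1}$, and the inductive hypothesis gives $\mu(F)\le 0$. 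Otherwise $F''$ is a rank-one subsheaf of $P$, so $F''=P\otimes I_Z$ for some ideal sheaf, and $\deg_L F''\le \deg_L P=0$. In either case $\deg_L F=\deg_L F'+\deg_L F''\le 0$, using $\deg_L F'\le 0$ (from semistability of $E_{r-1}$, or trivially when $F'=0$). This gives $\mu_L(F)\le 0=\mu_L(E_r)$. Torsion-freeness of $F$ and the subsheaf $F'$ is automatic since both sit inside a vector bundle. The only point needing care is the degree convention for rank-zero or nonsaturated pieces, and I will handle it by working with degrees rather than slopes throughout.

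For strict semistability when $r\ge 2$, the composite inclusion $P=E_1\hookrightarrow E_2\hookrightarrow\cdots\hookrightarrow E_r$ of the maps $\iota_s$ gives a proper subsheaf of rank $1<r$ with $\mu_L(P)=0=\mu_L(E_r)$. This is saturated, since the quotient $E_r/E_1$ is itself an iterated extension of copies of $P$ and hence locally free. So $E_r$ is not stable. (Equivalently, Proposition~\ref{prop:hom-Er-P} gives a nonzero non-isomorphism $P\to E_r$ between slope-zero bundles of different ranks.)

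I expect no serious obstacle. The main thing to state carefully is the extension-closure argument, specifically that subsheaves of $P$ have degree $\le 0$; this holds because a rank-one subsheaf of a line bundle differs from it by an ideal sheaf, whose $c_1$ is effective or zero.
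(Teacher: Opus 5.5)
Your proof is correct and follows the same route as the paper: you get slope zero from $c_1(E_r)=0$, prove semistability by induction using that semistable sheaves of slope $0$ are closed under extensions, and show $E_r$ is not stable for $r \ge 2$ by exhibiting a proper subsheaf of slope zero. The only differences are that you prove the extension-closure step, which the paper just cites as standard, and that you destabilize with $E_1=P$ rather than $E_{r-1}$; both work, though you should correct one sign: $c_1$ of an ideal sheaf is minus an effective class (or zero), and that sign is exactly what gives $\deg_L F''\le 0$.
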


\begin{proof}
By Lemma \ref{lem:chern}, $c_1(E_r) = 0$ in $\mathrm{NS}(A)$ for all $r \ge 1$, hence the slope is trivially $\mu_L(E_r) = 0$. We prove semistability by induction on $r$.

\noindent \textbf{Base case ($r=1$):} By definition, $E_1 = P$. As a line bundle of degree zero, $P$ is slope-stable, and thus semistable.

\noindent \textbf{Inductive step ($r \ge 2$):} Assume $E_{r-1}$ is semistable of slope zero. By definition, $E_r$ fits into the short exact sequence:
\[
0 \longrightarrow E_{r-1} \longrightarrow E_r \longrightarrow P \longrightarrow 0.
\]
It is a standard fundamental property of slope stability that an extension of two semistable vector bundles of the same slope $\mu$ is again semistable of slope $\mu$. Since the outer terms $E_{r-1}$ and $P$ are both semistable with slope zero, it follows immediately that their extension $E_r$ is semistable with $\mu_L(E_r) = 0$.

To establish that $E_r$ is strictly semistable (i.e., not stable) for $r \ge 2$, observe that the defining exact sequence exhibits $E_{r-1}$ as a proper subbundle of $E_r$. Since $0 < \mathrm{rk}(E_{r-1}) < \mathrm{rk}(E_r)$ and $\mu_L(E_{r-1}) = \mu_L(E_r) = 0$, the subbundle $E_{r-1}$ prevents $E_r$ from satisfying the strict inequality required for slope stability. Therefore, $E_r$ is strictly semistable.
\end{proof}

\begin{corollary}[Families of ACM Bundles]
\label{cor:families}
For a fixed rank $r \ge 1$, as $P$ varies in $\operatorname{Pic}^0(A) \setminus \{\mathcal{O}_A\}$, the construction in Definition \ref{def:iter} yields a $g$-dimensional family of pairwise non-isomorphic indecomposable ACM vector bundles.
\end{corollary}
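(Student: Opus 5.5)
For each $P \in \operatorname{Pic}^0(A)\setminus\{\mathcal{O}_A\}$, write $E_r(P)$ for a bundle produced by Definition~\ref{def:iter}, fixing some admissible choice of the extension classes $\xi_2,\dots,\xi_r$. The plan has two parts. First, Theorem~\ref{thm:main2} already shows that each $E_r(P)$ is an indecomposable ACM bundle of rank $r$. What remains is to show that $E_r(P)\simeq E_r(Q)$ forces $P\simeq Q$. Once that is done, the assignment $P\mapsto E_r(P)$ is injective on isomorphism classes, and it is indexed by the $g$-dimensional variety $\widehat A\setminus\{0\}$ (Corollary~\ref{cor:rank-one-family}). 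That is the sense in which we get a $g$-dimensional family.

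To recover $P$ from $E_r(P)$, I would use the Jordan--Hölder/unipotent structure. Step~2 of Lemma~\ref{lem:existence} shows that $P^\vee\otimes E_r(P)=U_r$ is unipotent, i.e.\ $E_r(P)$ has a filtration all of whose graded pieces are $P$. Now suppose $E_r(P)\simeq E_r(Q)$. The cleanest invariant is the Fourier--Mukai support. By Claim~\ref{claim:FM-Us} and Lemma~\ref{lem:tensor-translation-FM}, $\Phi_{\mathcal P}(E_r(P))=\Phi_{\mathcal P}(P\otimes U_r)\simeq t_{-\alpha}^*\FF_r[-g]$, where $P=P_\alpha$. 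This is a finite-length sheaf shifted by $[-g]$, with set-theoretic support the single point $-\alpha$. Since $\Phi_{\mathcal P}$ is an equivalence, the support of the transform is an isomorphism invariant of $E_r(P)$. Therefore $-\alpha=-\beta$, which gives $P\simeq Q$.

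An alternative route that avoids supports: the only degree-zero line bundle admitting a nonzero map into $E_r(P)$ is $P$ itself. Indeed, $\operatorname{Hom}(Q,E_r(P))=H^0(Q^\vee\otimes P\otimes U_r)$, and for $Q\neq P$ this group has a filtration with pieces $H^0(Q^\vee\otimes P)=0$ by Theorem~\ref{thm:pic0-acyclicity}. For $Q=P$ it is one-dimensional by Proposition~\ref{prop:hom-Er-P}. This also gives pairwise non-isomorphism directly. I would present this argument as the main proof, since it uses only results already stated, and mention the Fourier--Mukai support as the conceptual reason.

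The step I expect to be the main obstacle is making ``$g$-dimensional family'' precise. For $r\ge 2$ the construction depends on choices of extension classes, so $E_r(P)$ is not canonically a function of $P$. To obtain an honest algebraic family, I would carry out the construction relatively over $\widehat A\setminus\{0\}$, using the restriction $\mathcal P_{U}$ of the Poincaré bundle and relative Ext sheaves. The difficulty is that $\dim \operatorname{Ext}^1(P,E_{r-1}(P))$ may not obviously be constant in $P$. I would handle this by noting that $\Phi_{\mathcal P}$ commutes with translation, so the construction for one $P$ can be translated to all others. Concretely, I would fix one unipotent $U_r$ once and for all and set $E_r(P):=P\otimes U_r$; the iterated extensions for different $P$ are then obtained by tensoring a single chain of unipotent extensions by $P$. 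With this choice, the family is simply $\mathcal P|_{A\times U}\otimes p_A^*U_r$, which is flat over $U=\widehat A\setminus\{0\}$ and has dimension $g$. By the injectivity argument above, its fibers are pairwise non-isomorphic.
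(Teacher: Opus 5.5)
Your proof is correct, but the key step (recovering $P$ from $E_r(P)$) takes a different route from the paper. The paper invokes Proposition~\ref{lem:semistability_Er}. Since $E_r(P)$ is semistable of slope zero, the defining filtration with quotients $P$ is a Jordan--H\"older filtration, so $\operatorname{gr}(E_r(P))\cong P^{\oplus r}$, and uniqueness of the graded object forces $P\cong Q$.

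You instead test against degree-zero line bundles. The unipotent filtration of $Q^\vee\otimes P\otimes U_r$ together with $H^0(Q^\vee\otimes P)=0$ kills $\operatorname{Hom}(Q,E_r(P))$ for $Q\not\cong P$. On the other hand $\operatorname{Hom}(P,E_r(P))\neq 0$, which already follows from the inclusion $E_1=P\hookrightarrow E_r(P)$, so Proposition~\ref{prop:hom-Er-P} is more than you need.

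Your argument is more elementary and uses no stability theory. In particular it avoids the fact that, for slope stability, the Jordan--H\"older graded object is unique only up to reflexive hull. That is harmless here because the factors are line bundles, but the paper does not address it. In exchange, the paper's route gives the slightly stronger conclusion that $E_r(P)$ and $E_r(Q)$ are not even S-equivalent. Your Fourier--Mukai support argument is a valid third route, and both your argument and the paper's work for arbitrary choices of the classes $\xi_i$.

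Where you go beyond the paper is in making ``$g$-dimensional family'' precise. The paper only counts the dimension of the parameter space $\widehat A\setminus\{0\}$. You fix one chain of non-split unipotent extensions (e.g.\ $U_s=P_0^\vee\otimes E_s(P_0)$ for a fixed $P_0$) and use the flat family $\mathcal P|_{A\times U}\otimes p_A^*U_r$. Because $-\otimes P$ is an autoequivalence, non-splitness is preserved, so every fibre $P\otimes U_r$ is a legitimate output of Definition~\ref{def:iter}.
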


\begin{proof}
For each

\[
P \in \operatorname{Pic}^{0}(A)\setminus\{\mathcal{O}_{A}\},
\]
let \(E_r(P)\) be a bundle constructed as in Definition~\ref{def:iter}. By
Theorem~\ref{thm:main2}, \(E_r(P)\) is an indecomposable ACM vector bundle of
rank \(r\).

It remains to show that bundles arising from distinct line bundles are
non-isomorphic. By construction, \(E_r(P)\) admits a filtration

\[
0=E_0(P)\subset E_1(P)\subset\cdots\subset E_r(P)
\]
such that

\[
E_j(P)/E_{j-1}(P)\cong P
\qquad\text{for every }1\leq j\leq r.
\]
By Proposition~\ref{lem:semistability_Er}, \(E_r(P)\) is semistable of slope zero. Since \(P\) is
a stable line bundle of slope zero, the above filtration is a
Jordan--Hölder filtration of \(E_r(P)\). Consequently,

\[
\operatorname{gr}\bigl(E_r(P)\bigr)\cong P^{\oplus r}.
\]

Suppose that

\[
E_r(P)\cong E_r(Q)
\]
for two nontrivial line bundles

\[
P,Q\in\operatorname{Pic}^{0}(A).
\]
By the uniqueness of the associated graded object of a semistable
bundle,

\[
P^{\oplus r}
\cong
\operatorname{gr}\bigl(E_r(P)\bigr)
\cong
\operatorname{gr}\bigl(E_r(Q)\bigr)
\cong
Q^{\oplus r}.
\]
Since \(P\) and \(Q\) are stable line bundles, the uniqueness of the
stable Jordan--Hölder factors implies that

\[
P\cong Q.
\]
Therefore, distinct points of

\[
\operatorname{Pic}^{0}(A)\setminus\{\mathcal{O}_{A}\}
\]
give non-isomorphic bundles \(E_r(P)\).

Finally, \(\operatorname{Pic}^{0}(A)\) is an abelian variety of
dimension \(g\), and removing the origin does not change its
dimension. Hence, as \(P\) varies in

\[
\operatorname{Pic}^{0}(A)\setminus\{\mathcal{O}_{A}\},
\]
the construction gives a \(g\)-dimensional family of pairwise
non-isomorphic indecomposable ACM vector bundles of rank \(r\).
\end{proof}

\section{Classifications of ACM Line Bundles on Abelian Varieties }

Let \(A\) be an abelian variety of dimension \(g \geq 2\) defined over an algebraically closed field, and suppose that the Picard number \(\rho(A) = 1\). Let \(H\) be a primitive ample generator of the N\'eron--Severi group \(\mathrm{NS}(A)\), and let \(L\) be an ample line bundle on \(A\) such that
\[
L \cong H^{\otimes d}
\]
for some integer \(d > 0\).

\begin{theorem}
Every line bundle \(M\) on \(A\) can be written uniquely in the form
\[
M \cong H^{\otimes m} \otimes \alpha,
\]
where \(m \in \mathbb{Z}\) and \(\alpha \in \mathrm{Pic}^0(A)\). Moreover, \(M\) is arithmetically Cohen--Macaulay with respect to \(L\) if and only if one of the following holds:
\begin{itemize}
\item \(\alpha \not\cong \mathcal{O}_A\), or
\item \(\alpha \cong \mathcal{O}_A\) and \(d\) does not divide \(m\).
\end{itemize}
Equivalently, $M$ fails to be arithmetically Cohen--Macaulay with respect to $L$
if and only if $M \cong L^{\otimes k}$ for some integer $k \in \mathbb{Z}$
(including $k = 0$, i.e., $M \cong \mathcal{O}_A$).

\end{theorem}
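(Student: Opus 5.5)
The plan is to reduce everything to the rank-one vanishing already established in Section~\ref{sec:proof-main}. I first settle the decomposition, then run the ACM test one twist at a time.

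\textbf{Decomposition.} Since $\rho(A)=1$ and $\mathrm{NS}(A)$ is torsion-free, $\mathrm{NS}(A)=\mathbb{Z}\cdot[H]$. For a line bundle $M$, write its class in $\mathrm{NS}(A)=\Pic(A)/\Pic^0(A)$ as $m[H]$ with $m\in\mathbb{Z}$. Then $\alpha:=M\otimes H^{\otimes(-m)}$ lies in $\Pic^0(A)$. The integer $m$ is unique because $[H]$ is a generator of an infinite cyclic group. Once $m$ is fixed, $\alpha$ is determined as $M\otimes H^{-m}$. This gives existence and uniqueness of $M\cong H^{\otimes m}\otimes\alpha$.

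\textbf{The ACM test.} For $k\in\mathbb{Z}$ we have $M\otimes L^{\otimes k}\cong H^{\otimes(m+dk)}\otimes\alpha$, so I study $N_n:=H^{\otimes n}\otimes\alpha$ with $n=m+dk$.
\begin{itemize}
\item If $n>0$, then $N_n$ is ample by Lemma~\ref{lem:pic0-preserves-ampleness}. Kodaira vanishing with $\omega_A\simeq\mathcal{O}_A$ gives $H^i(A,N_n)=0$ for $i>0$.
\item If $n<0$, Corollary~\ref{cor:serre-duality} identifies $H^i(A,N_n)^\vee$ with $H^{g-i}(A,H^{\otimes(-n)}\otimes\alpha^{-1})$. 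That bundle is ample, so the group vanishes for $i<g$.
\end{itemize}
This repeats the argument of Theorem~\ref{thm:main} verbatim. Hence only the twist with $n=0$, namely $N_0=\alpha$, can carry intermediate cohomology.

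\textbf{Case analysis.}
\begin{itemize}
\item If $\alpha\not\cong\mathcal{O}_A$, then whenever $n=0$ occurs, Theorem~\ref{thm:pic0-acyclicity} gives $H^i(A,\alpha)=0$ for all $i$. So $M$ is ACM.
\item If $\alpha\cong\mathcal{O}_A$ and $d\nmid m$, then $m+dk\neq0$ for every $k$. Every twist is ample or anti-ample, and again $M$ is ACM.
\item If $\alpha\cong\mathcal{O}_A$ and $d\mid m$, take $k=-m/d$. Then $M\otimes L^{\otimes k}\cong\mathcal{O}_A$, and $h^1(A,\mathcal{O}_A)=g>0$ with $0<1<g$ since $g\ge2$. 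This is Corollary~\ref{cor:structure-not-acm}, so $M$ is not ACM.
\end{itemize}

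\textbf{Reformulation.} The non-ACM case is exactly $\alpha\cong\mathcal{O}_A$ and $m=dk'$. Because $L\cong H^{\otimes d}$ as line bundles and not merely numerically, this is equivalent to $M\cong H^{\otimes dk'}\cong L^{\otimes k'}$. Conversely, if $M\cong L^{\otimes k'}$, uniqueness of the decomposition forces $\alpha\cong\mathcal{O}_A$ and $m=dk'$.

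\textbf{Main obstacle.} The delicate points are foundational rather than computational.
\begin{itemize}
\item The decomposition needs $\mathrm{NS}(A)$ to be torsion-free, so that $\rho=1$ really means $\mathrm{NS}(A)\cong\mathbb{Z}$. I would invoke the standing assumption in the Remark of Section~\ref{sec:preliminaries}; for abelian varieties this torsion-freeness is in fact a standard fact.
\item The statement allows an arbitrary algebraically closed field, while Kodaira vanishing and Theorem~\ref{thm:pic0-acyclicity} are stated in characteristic zero. I would either keep the paper's global convention $\operatorname{char}k=0$, or replace Kodaira vanishing by Mumford's vanishing theorem for ample line bundles on abelian varieties, which holds in every characteristic. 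The acyclicity of a nontrivial $\alpha\in\Pic^0(A)$ is also characteristic-free.

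\end{itemize}
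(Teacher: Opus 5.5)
Your proof is correct and follows essentially the same route as the paper. Both arguments decompose $M$ via $\mathrm{NS}(A)=\mathbb{Z}[H]$, rewrite $M\otimes L^{\otimes k}\cong H^{\otimes(m+dk)}\otimes\alpha$, and split by the sign of $m+dk$, so that only the twist with $m+dk=0$ can carry intermediate cohomology; your added remarks on the torsion-freeness of $\mathrm{NS}(A)$ and on using Mumford's vanishing theorem in positive characteristic are accurate refinements that the paper does not spell out.
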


\begin{proof}
A line bundle \(M\) is said to be arithmetically Cohen--Macaulay (ACM) with respect to \(L\) if
\[
H^i(A, M \otimes L^{\otimes n}) = 0
\]
for all integers \(n\) and all \(0 < i < g\).

Since \(\rho(A) = 1\), the N\'eron--Severi group is free of rank one. Thus the numerical class of any line bundle \(M\) is \(m\) times the class of \(H\) for a unique integer \(m\). This means that \(M \otimes H^{-m}\) has trivial numerical class and therefore lies in \(\mathrm{Pic}^0(A)\). Setting \(\alpha = M \otimes H^{-m}\), we obtain the unique decomposition \(M \cong H^{\otimes m} \otimes \alpha\). Since $\mathrm{NS}(A)$ is torsion-free (it is free abelian of rank $\rho(A) = 1$),
the integer $m$ is uniquely determined by $M$.
The assertion that $M \otimes H^{-m}$ lies in $\Pic^0(A)$ follows from the
canonical exact sequence of groups
\[
  0 \;\longrightarrow\; \Pic^0(A) \;\longrightarrow\; \Pic(A)
  \;\longrightarrow\; \mathrm{NS}(A) \;\longrightarrow\; 0,
\]
which identifies $\Pic^0(A)$ precisely as the kernel of the map to $\mathrm{NS}(A)$;
see~\cite{MumfordAV}.

Now fix any integer \(n\). Then
\[
M \otimes L^{\otimes n} \cong H^{\otimes (m + dn)} \otimes \alpha.
\]
We study the cohomology groups of this bundle by considering the sign of the exponent \(m + dn\).

If $m + dn > 0$, then $N := H^{\otimes(m+dn)} \otimes \alpha$ is ample
(tensoring by $\alpha \in \Pic^0(A)$ does not affect ampleness.
On an abelian variety, an ample line bundle has
$H^i(A, -) = 0$ for all $i > 0$.
In particular, $H^i(A, M \otimes L^{\otimes n}) = 0$ for $0 < i < g$.

If $m + dn < 0$, then $N := H^{\otimes(m+dn)} \otimes \alpha$ is anti-ample.
Since $\omega_A \cong \mathcal{O}_A$, Serre duality gives
\[
  H^i(A, N) \;\cong\; H^{g-i}(A, N^{-1})^\vee.
\]
Now $N^{-1} = H^{\otimes(-(m+dn))} \otimes \alpha^{-1}$ is ample (as $-(m+dn) > 0$).
Hence $H^{g-i}(A, N^{-1}) = 0$ for $g - i > 0$, i.e., for $i < g$.
In particular, $H^i(A, N) = 0$ for $0 < i < g$.

The remaining case is \(m + dn = 0\). Here \(M \otimes L^{\otimes n} \cong \alpha\).

If $\alpha \not\cong \mathcal{O}_A$, then when $m + dn = 0$ we have
$M \otimes L^{\otimes n} \cong \alpha$,
a nontrivial element of $\Pic^0(A)$.
By vanishing theorem for numerically trivial non-trivial line bundles
(see Theorem~\ref{thm:pic0-acyclicity}),
all cohomology groups of $\alpha$ vanish:
$H^i(A, \alpha) = 0$ for all $i \geq 0$.
Hence $H^i(A, M \otimes L^{\otimes n}) = 0$ for all $i$, and the
required vanishing holds.

If instead \(\alpha \cong \mathcal{O}_A\), then \(M \otimes L^{\otimes n} \cong \mathcal{O}_A\). But on an abelian variety of dimension \(g \geq 2\), the structure sheaf has nontrivial cohomology in every degree between 0 and \(g\). In particular, there exists some \(i\) with \(0 < i < g\) such that \(H^i(A, \mathcal{O}_A) \neq 0\). Therefore \(M\) is not ACM with respect to \(L\).

Putting these cases together, \(M\) is ACM precisely when there is no integer \(n\) satisfying both \(m + dn = 0\) and \(\alpha \cong \mathcal{O}_A\). This is equivalent to the two conditions stated in the theorem.

The equivalent formulation follows immediately: the powers of \(L\) are exactly the line bundles for which \(\alpha \cong \mathcal{O}_A\) and \(d\) divides \(m\).
\end{proof}

\begin{remark}
The decomposition \(M \cong H^{\otimes m} \otimes \alpha\) relies only on the assumption that \(\rho(A) = 1\). It holds for every line bundle, regardless of the ACM condition.
\end{remark}

The next result shows that this simple characterization is special to Picard rank one. It already fails for abelian surfaces of Picard rank two.

\begin{proposition}
There exists an abelian surface $A$ with $\rho(A) = 2$, an ample line bundle $L$ on $A$, and a line bundle $M$ on $A$ such that $M$ is not isomorphic to any power of $L$, yet $M$ is not arithmetically Cohen--Macaulay with respect to $L$.
\end{proposition}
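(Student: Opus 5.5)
A natural choice is a product of elliptic curves, $A = E_1\times E_2$ with $E_1,E_2$ non-isogenous, so that $\NS(A)=\mathbb Z f_1\oplus \mathbb Z f_2$. Here $f_1,f_2$ are the fibre classes, with $f_1^2=f_2^2=0$ and $f_1f_2=1$, and $\rho(A)=2$. The plan is to exhibit a line bundle $M$ with an indefinite-type class: there will be some twist $n$ for which $M\otimes L^{\otimes n}$ is non-degenerate of index $1$. On a surface such a bundle has $H^1\neq 0$, since by Mumford's index theorem its only nonvanishing cohomology is $H^1$, of dimension $|\chi|$, with $\chi = (c_1)^2/2$ by Theorem~\ref{thm:riemann_roch}.

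\textbf{Choices.} Take $L=\mathcal O(f_1+f_2)$, i.e.\ $L=p_1^*\mathcal O_{E_1}(x_1)\otimes p_2^*\mathcal O_{E_2}(x_2)$, which is ample. Take
\[
M = p_1^*\mathcal O_{E_1}(2x_1)\otimes p_2^*\mathcal O_{E_2}(-x_2)\cdot \text{(shifted)}.
\]
More simply, take $M$ with class $f_1-f_2$, namely $M=p_1^*\mathcal O_{E_1}(x_1)\otimes p_2^*\mathcal O_{E_2}(-x_2)$. Its class is not a multiple of $f_1+f_2$, so $M$ is not a power of $L$; indeed no $L^{\otimes k}$ even has the same numerical class.

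\textbf{Computation at $n=0$.} Künneth gives
\[
H^1(A,M)\supseteq H^0(E_1,\mathcal O(x_1))\otimes H^1(E_2,\mathcal O(-x_2)).
\]
Both factors are $1$-dimensional, using Riemann--Roch and Serre duality on an elliptic curve. Hence $H^1(A,M)\neq 0$, and $M$ is not ACM with respect to $L$.

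\textbf{Remaining checks.} This is essentially a Künneth computation, and the only subtlety is making sure the example genuinely satisfies the hypotheses. First, $\rho(A)=2$ exactly. This holds because $\NS(E_1\times E_2)\cong\mathbb Z^2\oplus\operatorname{Hom}(E_1,E_2)$, and $\operatorname{Hom}(E_1,E_2)=0$ for non-isogenous curves. Such curves exist over any uncountable algebraically closed field, or over $\overline{\mathbb Q}$ using two curves with different $j$-invariants, one CM and one not. Second, $M$ is not isomorphic to any $L^{\otimes k}$, which follows already from the numerical classes being distinct.

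As a sanity check via the index theorem, $(f_1-f_2)^2=-2$, so $\chi(M)=-1$. This is consistent with $h^1(M)=1$ and $h^0(M)=h^2(M)=0$.
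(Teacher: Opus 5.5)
Your proposal is correct and follows essentially the same route as the paper. Both use $A=E_1\times E_2$ with $E_1,E_2$ non-isogenous, the same $L$ of class $f_1+f_2$, and a K\"unneth computation showing $H^1(A,M)\neq 0$ already at $n=0$. The only difference is the choice of $M$: you take the indefinite class $f_1-f_2$ (so $h^1(M)=1$ and $M^2=-2$), while the paper takes $\mathrm{pr}_1^*\mathcal{O}_{E_1}(2p)$ of degenerate class $2F_2$ (so $h^1(M)=2$). This does not affect the argument, but you should delete the garbled first attempt at defining $M$ that precedes ``More simply''.
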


\begin{proof}
Let $A = E_1 \times E_2$, where $E_1$ and $E_2$ are non-isogenous elliptic curves over an algebraically closed field $k$. The N\'eron--Severi group $\operatorname{NS}(A)$ has rank $\rho(A) = 2$ and is generated by the numerical equivalence classes of the fibers of the natural projections. Specifically, fix points $p \in E_1$ and $q \in E_2$, and define the generators
\[
    F_1 = E_1 \times \{q\}, \quad F_2 = \{p\} \times E_2.
\]
The intersection form on $\operatorname{NS}(A)$ is given by $F_1^2 = F_2^2 = 0$ and $F_1 \cdot F_2 = 1$.

Define the line bundle
\[
    L = \operatorname{pr}_1^* \mathcal{O}_{E_1}(p) \otimes \operatorname{pr}_2^* \mathcal{O}_{E_2}(q).
\]
The first Chern class of $L$ is $c_1(L) = F_1 + F_2$. Since $L^2 = (F_1+F_2)^2 = 2 > 0$ and $L \cdot F_i = 1 > 0$ for $i=1,2$, the Nakai--Moishezon criterion implies that $L$ is strictly ample.

Next, define the line bundle
\[
    M = \operatorname{pr}_1^* \mathcal{O}_{E_1}(2p).
\]
Since $\mathrm{pr}_1 \colon E_1 \times E_2 \to E_1$, the pullback of the
divisor $2p$ on $E_1$ is the divisor $2\bigl(\{p\} \times E_2\bigr)$ on $A$.
In terms of our generators,
\[
  c_1(M) = 2\bigl[\{p\} \times E_2\bigr] = 2F_2.
\]
In the basis $(F_1, F_2)$ of $\mathrm{NS}(A)$, the classes are
$c_1(L) = (1,1)$ and $c_1(M) = (0,2)$.
Since $(0,2) \neq k(1,1)$ for any $k \in \mathbb{Z}$, the line bundle $M$
is not isomorphic to any tensor power of $L$.

To demonstrate that $M$ is not arithmetically Cohen--Macaulay (ACM) with respect to $L$, it suffices to show that the intermediate cohomology group $H^1(A, M)$ is non-vanishing. By the K\"unneth formula, we have
\[
    H^1(A, M) \cong \Big( H^0(E_1, \mathcal{O}_{E_1}(2p)) \otimes H^1(E_2, \mathcal{O}_{E_2}) \Big) \oplus \Big( H^1(E_1, \mathcal{O}_{E_1}(2p)) \otimes H^0(E_2, \mathcal{O}_{E_2}) \Big).
\]
By Riemann--Roch on the elliptic curve $E_1$, the line bundle $\mathcal{O}_{E_1}(2p)$ has degree $2 > 0$, giving $h^0(E_1, \mathcal{O}_{E_1}(2p)) = 2$ and $h^1(E_1, \mathcal{O}_{E_1}(2p)) = 0$. Furthermore, for the elliptic curve $E_2$, $h^1(E_2, \mathcal{O}_{E_2}) = 1$. Consequently, the first summand is isomorphic to $k^2 \otimes k \cong k^2$, yielding
\[
    h^1(A, M) = 2 \neq 0.
\]
Since $\dim A = 2$, the line bundle $M$ is ACM with respect to $L$ if and only if
$H^1(A, M \otimes L^{\otimes n}) = 0$ for \emph{every} $n \in \mathbb{Z}$.
Taking $n = 0$, we have just shown $H^1(A, M) \neq 0$,
so the ACM condition fails for $M$.
\end{proof}

\begin{theorem}\label{thm:acm-surface}
Let $A$ be an abelian surface over an algebraically closed field, and let $L$ be an ample line bundle on $A$. For a line bundle $M$ on $A$, set
\[
N_n := M\otimes L^{\otimes n}, \qquad n\in \mathbb Z.
\]
Then $M$ is arithmetically Cohen--Macaulay with respect to $L$ if and only if for every $n\in \mathbb Z$, the line bundle $N_n$ satisfies one of the following:

\begin{enumerate}
\item $N_n^2>0$;
\item $N_n\in \Pic^0(A)\setminus\{\mathcal O_A\}$;

\item $N_n^2 = 0$ and $N_n \notin \Pic^0(A)$, and in the decomposition
\[
  N_n \;\cong\; \pi_n^*Q_n \otimes P_n
\]
associated to $N_n$ by the structure theorem for degenerate line bundles
\textup{(}where $\pi_n : A \to E_n$ is the elliptic quotient determined by $N_n$,
$Q_n \in \Pic(E_n)$ has $\deg Q_n \neq 0$, and $P_n \in \Pic^0(A)$\textup{)},
one has $P_n|_{F_n} \not\cong \mathcal{O}_{F_n}$ for the generic fiber $F_n$ of $\pi_n$.

\end{enumerate}

Equivalently, $M$ fails to be arithmetically Cohen--Macaulay with respect to $L$ if and only if for some $n\in \mathbb Z$, one of the following holds:

\begin{enumerate}
\item $N_n \cong \mathcal O_A$;
\item $N_n^2<0$;

\item $N_n^2 = 0$, $N_n \notin \Pic^0(A)$, and $N_n \cong \pi_n^* T_n$
where $\pi_n : A \to E_n$ is the elliptic quotient determined by $N_n$
via the structure theorem for degenerate line bundles, and
$T_n \in \Pic(E_n)$ satisfies $\deg T_n \neq 0$.
\end{enumerate}
\end{theorem}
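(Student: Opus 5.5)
Since $\dim A=2$, the ACM condition for $M$ with respect to $L$ says exactly that $H^1(A,N_n)=0$ for every $n\in\mathbb Z$. The plan is therefore to prove, for a single line bundle $N$ on the abelian surface $A$,
\[
H^1(A,N)=0 \iff N \text{ satisfies (1), (2) or (3)},
\]
and then apply this to each $N_n$. I split according to the sign of $N^2$. Riemann--Roch (Theorem~\ref{thm:riemann_roch}) gives $\chi(N)=N^2/2$, and this value controls everything except the case $N^2=0$. The second (``equivalently'') formulation should come out of the same case analysis by negating each case. Within the case $N^2=0$, $N\notin\Pic^0(A)$, the negation of (3) is the condition $P_n|_{F_n}\cong\mathcal O_{F_n}$, and I must show this is the same as $N_n\cong\pi_n^*T_n$.

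\textbf{Case $N^2\neq 0$.} Then $N$ is nondegenerate, i.e.\ $K(N)$ is finite. I would invoke Mumford's vanishing/index theorem: a nondegenerate line bundle has cohomology in exactly one degree $i(N)$. On a surface, $N^2>0$ forces $i(N)\in\{0,2\}$, since then $\pm N$ is ample by Riemann--Roch and the Hodge index. In that case $h^1(N)=0$, which can also be seen directly from Kodaira vanishing and Serre duality (Corollary~\ref{cor:serre-duality}), as in the proof of Theorem~\ref{thm:main}. When $N^2<0$ we get $\chi<0$, so the index must be $1$ and $h^1(N)=-N^2/2>0$.

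\textbf{Case $N\in\Pic^0(A)$.} Here $H^1(A,N)=0$ if $N\not\cong\mathcal O_A$, by Theorem~\ref{thm:pic0-acyclicity}. If $N\cong\mathcal O_A$, then $h^1(\mathcal O_A)=2\neq0$.

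\textbf{Case $N^2=0$, $N\notin\Pic^0(A)$.} This is the main case. The identity component of $K(N)$ is an elliptic curve $F\subset A$. Setting $E=A/F$ with quotient map $\pi\colon A\to E$, the structure theorem gives $N\cong\pi^*Q\otimes P$ with $P\in\Pic^0(A)$. Because $c_1(N)\neq0$, we must have $\deg Q\neq0$.

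For every $t\in E$, translation invariance of $P$ gives $P|_{\pi^{-1}(t)}\cong P|_F$. So ``generic fiber'' in (3) may be replaced by ``every fiber'', and it suffices to study the Leray spectral sequence for $\pi$.

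If $P|_F\not\cong\mathcal O_F$, then $N$ restricts on every fiber to a nontrivial degree-zero line bundle on an elliptic curve. Such a bundle has no cohomology, so cohomology and base change give $\mathbf R\pi_*N=0$, and hence $H^1(A,N)=0$.

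If $P|_F\cong\mathcal O_F$, I use the exactness of $0\to\widehat E\to\widehat A\to\widehat F$, the dual of $0\to F\to A\to E\to 0$. It shows $P\cong\pi^*P'$ for some $P'\in\Pic^0(E)$, so $N\cong\pi^*T$ with $T=Q\otimes P'$ and $\deg T=\deg Q\neq0$. Next, $\pi$ is a smooth isotrivial family of elliptic curves with a group structure, so $\pi_*\mathcal O_A\cong\mathcal O_E$ and $R^1\pi_*\mathcal O_A\cong\mathcal O_E\otimes H^1(F,\mathcal O_F)\cong\mathcal O_E$; I would prove triviality of the latter via invariant differentials, or by relative duality since $\omega_{A/E}\cong\mathcal O_A$. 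The projection formula then gives
\[
H^1(A,\pi^*T)\cong H^1(E,T)\oplus H^0(E,T),
\]
using that the Leray spectral sequence degenerates because $E$ is a curve. Since $\deg T\neq0$, one of these two summands is nonzero.

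Assembling the cases yields both formulations. The main obstacle is the degenerate case: justifying the structure theorem precisely (that $\pi$ is an elliptic quotient and $\deg Q\neq 0$), showing that fiber-triviality of $P$ forces $P$ to be a pullback from $E$, and computing $R^1\pi_*\mathcal O_A\cong\mathcal O_E$. I would attack all three using the isogeny $F\times E'\to A$ furnished by Poincaré reducibility, where each statement becomes a Künneth computation as in the preceding proposition.
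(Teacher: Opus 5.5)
Your proposal is correct and follows essentially the same route as the paper. You reduce to classifying line bundles $N$ with $H^1(A,N)=0$ and use $\chi(N)=N^2/2$ for the sign analysis. In the degenerate case you use the structure theorem $N\cong\pi^*Q\otimes P$, then cohomology and base change when $P|_F\not\cong\mathcal O_F$. When $P|_F\cong\mathcal O_F$ you descend via $0\to\widehat E\to\widehat A\to\widehat F$, compute $R^1\pi_*\mathcal O_A\cong\mathcal O_E$ by relative duality, and conclude from the degenerate Leray sequence.

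The only variation is the nondegenerate case $N^2\neq 0$. There you invoke Mumford's index theorem, which on its own already gives $h^1(N)=0$ when $\chi(N)>0$ and $h^1(N)=-N^2/2$ when $\chi(N)<0$, so the ampleness detour is unnecessary. The paper instead proves directly that an effective divisor with $D^2>0$ on an abelian surface is ample (Hodge index, no rational curves, Nakai--Moishezon), and uses $h^1\geq-\chi$ when $N^2<0$.
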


\begin{proof}
Since $\dim A=2$, the definition of the arithmetically Cohen--Macaulay property reduces to
\[
M \text{ is ACM with respect to }L
\iff
H^1(A,M\otimes L^{\otimes n})=0 \text{ for every } n\in \mathbb Z.
\]
Thus it suffices to classify those line bundles $N$ on $A$ for which
\[
H^1(A,N)=0.
\]
We divide the argument according to the value of $N^2$.

\medskip
\noindent
\textbf{Step 1: the case $N^2>0$.}

By the Riemann--Roch theorem on an abelian surface,
\[
\chi(N)=\frac{N^2}{2}.
\]
Hence $N^2>0$ implies $\chi(N)>0$. Since $\omega_A\cong \mathcal O_A$, Serre duality gives
\[
h^2(A,N)=h^0(A,N^{-1}).
\]
Therefore
\[
\chi(N)=h^0(A,N)-h^1(A,N)+h^0(A,N^{-1})>0.
\]
In particular, at least one of $h^0(A,N)$ and $h^0(A,N^{-1})$ is nonzero. Thus either $N$ or $N^{-1}$ is effective.

\begin{claim}
If $D$ is an effective divisor on $A$ with $D^2 > 0$,
then $D$ is ample.
\end{claim}

\begin{proof}
We show $D \cdot C > 0$ for every irreducible curve $C \subset A$.
By the Hodge Index Theorem~\cite[Theorem~1.9]{Hartshorne},
\[
  (D \cdot C)^2 \;\geq\; D^2 \cdot C^2.
\]
Suppose for contradiction that $D \cdot C = 0$.
Then $D^2 \cdot C^2 \leq 0$, and since $D^2 > 0$ we get $C^2 \leq 0$.
On the other hand, since abelian surfaces contain no rational
curves, the adjunction formula gives
\[
  C^2 = 2p_a(C) - 2 \;\geq\; 0.
\]
Hence $C^2 = 0$, so the inequality $(D \cdot C)^2 \geq D^2 \cdot C^2 = 0$
is an equality.
The equality case of the Hodge Index Theorem then forces $C$ to be
numerically trivial in $\mathrm{NS}(A)$, contradicting the assumption that
$C$ is an irreducible curve.
Therefore $D \cdot C > 0$ for every irreducible $C \subset A$,
and the Nakai--Moishezon criterion~\cite[Theorem~1.10]{Hartshorne}
implies $D$ is ample.
\end{proof}

Applying the claim: if $h^0(A, N) > 0$, write $N = \mathcal{O}_A(D)$ with
$D \geq 0$ and $D^2 = N^2 > 0$; by the claim, $N$ is ample, so
$H^1(A, N) = 0$.

If instead $h^0(A, N^{-1}) > 0$, write $N^{-1} = \mathcal{O}_A(D')$ with
$(N^{-1})^2 = (-c_1(N))^2 = N^2 > 0$; by the claim, $N^{-1}$ is ample,
so $N$ is anti-ample.
Since $\dim A = 2$ and $\omega_A \cong \mathcal{O}_A$,
Serre duality gives
\[
  H^1(A, N) \;\cong\; H^{2-1}(A, N^{-1})^\vee \;=\; H^1(A, N^{-1})^\vee \;=\; 0,
\]
since $N^{-1}$ is ample and hence $H^1(A, N^{-1}) = 0$.
In both cases $H^1(A, N) = 0$.




\medskip
\noindent
\textbf{Step 2: the case $N\in \Pic^0(A)\setminus\{\mathcal O_A\}$.}

For a nontrivial algebraically trivial line bundle on an abelian variety, all cohomology groups vanish. Hence
\[
H^i(A,N)=0 \qquad \text{for all } i,
\]
and in particular
\[
H^1(A,N)=0.
\]

\medskip
\noindent
\textbf{Step 3: the case $N^2<0$.}

Again Riemann--Roch gives
\[
\chi(N)=\frac{N^2}{2}<0.
\]
Using Serre duality,
\[
h^1(A,N)=h^0(A,N)+h^0(A,N^{-1})-\chi(N).
\]
Since the first two terms are nonnegative and $-\chi(N)>0$, it follows that
\[
h^1(A,N)>0.
\]
Thus
\[
H^1(A,N)\neq 0.
\]

\medskip
\noindent
\textbf{Step 4: the case $N^2=0$.}

We first treat the algebraically trivial case. If $N\in \Pic^0(A)$, then either $N\cong \mathcal O_A$, in which case
\[
H^1(A,\mathcal O_A)\neq 0,
\]
or else $N$ is nontrivial algebraically trivial, in which case Step 2 applies and
\[
H^1(A,N)=0.
\]

We therefore assume from now on that
\[
N^2=0, \qquad N\notin \Pic^0(A).
\]
In this situation $N$ is a degenerate line bundle in the standard sense. The structure theorem for degenerate line bundles on abelian varieties yields an elliptic quotient
\[
\pi:A\to E
\]
and a decomposition
\[
N \cong \pi^*Q \otimes P,
\]
where $Q\in \Pic(E)$ has nonzero degree and $P\in \Pic^0(A)$.

We now compute $H^1(A,N)$.

\medskip
\noindent
\emph{Case 4a: $P|_F \not\cong \mathcal O_F$ for a fiber $F$ of $\pi$.}

Since $\pi^*Q$ restricts trivially to every fiber of $\pi$, one has
\[
N|_F \cong P|_F.
\]
By assumption this is a nontrivial degree-zero line bundle on the elliptic curve $F$. Hence
\[
H^0(F,N|_F)=H^1(F,N|_F)=0.
\]
By cohomology and base change,
\[
\pi_*N=0, \qquad R^1\pi_*N=0.
\]
The Leray spectral sequence therefore gives
\[
H^1(A,N)=0.
\]

\medskip
\noindent
\textit{Case 4b: \(P|_{F} \cong \mathcal O_{F}\).}
We claim that \(P\) descends from the base elliptic curve \(E\). Indeed, since
\[
0 \longrightarrow F \longrightarrow A \xrightarrow{\pi} E \longrightarrow 0
\]
is an exact sequence of abelian varieties, dualizing yields an exact sequence
\[
0 \longrightarrow \widehat{E} \xrightarrow{\pi^{*}} \widehat{A}
\longrightarrow \widehat{F} \longrightarrow 0,
\]
where \(\widehat{A}=\operatorname{Pic}^{0}(A)\), \(\widehat{E}=\operatorname{Pic}^{0}(E)\), and
\(\widehat{F}=\operatorname{Pic}^{0}(F)\). Thus the kernel of the restriction map
\[
\operatorname{Pic}^{0}(A)\longrightarrow \operatorname{Pic}^{0}(F), \qquad
M \longmapsto M|_{F},
\]
is precisely \(\pi^{*}\operatorname{Pic}^{0}(E)\). Since \(P|_{F}\cong \mathcal O_{F}\), it follows that
there exists \(\beta \in \operatorname{Pic}^{0}(E)\) such that
\[
P \cong \pi^{*}\beta.
\]

Therefore
\[
N \cong \pi^{*}Q \otimes P \cong \pi^{*}Q \otimes \pi^{*}\beta
\cong \pi^{*}(Q\otimes \beta)=:\pi^{*}T,
\]
where \(T\in \operatorname{Pic}(E)\). Since \(\beta\in \operatorname{Pic}^{0}(E)\), we have
\[
\deg T=\deg(Q\otimes \beta)=\deg Q \neq 0.
\]

 We compute $\pi_*\mathcal{O}_A$ and $R^1\pi_*\mathcal{O}_A$.
The first: since $\pi$ has geometrically connected fibers, Zariski's
connectedness theorem gives $\pi_*\mathcal{O}_A \cong \mathcal{O}_E$.
For the second: since $\omega_A \cong \mathcal{O}_A$ and
$\omega_E \cong \mathcal{O}_E$, the relative dualizing sheaf satisfies
\[
  \omega_{A/E} \;:=\; \omega_A \otimes \pi^*\omega_E^{-1}
  \;\cong\; \mathcal{O}_A.
\]
By relative Serre duality for $\pi$ with one-dimensional fibres,
\[
  R^1\pi_*\mathcal{O}_A
  \;\cong\; \bigl(\pi_*\omega_{A/E}\bigr)^\vee
  \;=\; \bigl(\pi_*\mathcal{O}_A\bigr)^\vee
  \;\cong\; \mathcal{O}_E^\vee
  \;\cong\; \mathcal{O}_E.
\]

Hence, by the projection formula,
\[
\pi_{*}(\pi^{*}T)\cong T\otimes \pi_{*}\mathcal O_{A}\cong T,
\qquad
R^{1}\pi_{*}(\pi^{*}T)\cong T\otimes R^{1}\pi_{*}\mathcal O_{A}\cong T.
\]

The Leray spectral sequence for $\pi$ and the sheaf $\pi^*T$ gives
\[
  E_2^{p,q} \;=\; H^p\!\bigl(E,\, R^q\pi_*(\pi^*T)\bigr)
  \;\Longrightarrow\; H^{p+q}(A,\, \pi^*T).
\]
Since $\dim E = 1$, we have $H^p(E, -) = 0$ for all $p \geq 2$.
Hence $E_2^{p,q} = 0$ for $p \geq 2$, so the differential
$d_2 \colon E_2^{0,1} \to E_2^{2,0}$ lands in zero and every
higher differential vanishes.
The spectral sequence therefore degenerates at $E_2$, yielding a
short exact sequence
\[
  0 \;\longrightarrow\; H^1(E,\, \pi_*(\pi^*T))
  \;\longrightarrow\; H^1(A,\, \pi^*T)
  \;\longrightarrow\; H^0(E,\, R^1\pi_*(\pi^*T))
  \;\longrightarrow\; 0.
\]

and therefore
\[
H^{1}(A,\pi^{*}T)\cong H^{1}(E,T)\oplus H^{0}(E,T).
\]

Finally, because \(\deg T\neq 0\), at least one of \(H^{0}(E,T)\) and \(H^{1}(E,T)\) is nonzero.
More precisely:
\begin{itemize}
\item if \(\deg T>0\), then \(H^{0}(E,T)\neq 0\) and \(H^{1}(E,T)=0\);
\item if \(\deg T<0\), then \(H^{0}(E,T)=0\) and \(H^{1}(E,T)\neq 0\).
\end{itemize}
Thus in either case
\[
H^{1}(A,N)=H^{1}(A,\pi^{*}T)\neq 0.
\]
This completes Case 4b.

Combining Cases 4a and 4b, we conclude that when $N^2=0$ and $N\notin \Pic^0(A)$, one has
\[
H^1(A,N)=0
\iff
N\cong \pi^*Q\otimes P \text{ with } P|_F\not\cong \mathcal O_F.
\]
Equivalently,
\[
H^1(A,N)\neq 0
\iff
N\cong \pi^*T \text{ for some } T\in \Pic(E) \text{ with } \deg T\neq 0.
\]

\medskip
\noindent
\textbf{Step 5: conclusion.}

We have completely classified the line bundles $N$ on $A$ with $H^1(A,N)=0$:
\[
H^1(A,N)=0
\]
if and only if exactly one of the following holds:
\begin{enumerate}
\item $N^2>0$;
\item $N\in \Pic^0(A)\setminus\{\mathcal O_A\}$;
\item $N^2=0$, $N\notin \Pic^0(A)$, and in the decomposition
\[
N\cong \pi^*Q\otimes P
\]
one has $P|_F\not\cong \mathcal O_F$.
\end{enumerate}

Applying this criterion to every twist
\[
N_n=M\otimes L^{\otimes n},
\]
we obtain the asserted characterization of those $M$ which are arithmetically Cohen--Macaulay with respect to $L$. The equivalent description of the non-ACM locus is just the negation of the above three good cases.
\end{proof}

\begin{remark}\label{rem:elliptic-quotient}
In condition~(3) of both the ACM criterion and the failure criterion of
Theorem~\ref{thm:acm-surface}, the elliptic quotient $\pi : A \to E$
is not a freely chosen datum but is \emph{uniquely determined by $N_n$}:
it is the quotient whose kernel is the maximal abelian subvariety of $A$
contained in the null space of $\phi_{N_n} : A \to \widehat{A}$,
as provided by the structure theorem for degenerate line bundles
\cite{BirkenhakeLange}.
In particular, if $A$ admits more than one elliptic quotient,
the relevant quotient for a given $n$ is the one associated to $N_n$,
and different values of $n$ may in principle give rise to
non-isomorphic elliptic quotients of $A$.
\end{remark}

\section{Representation Type of ACM Bundles}

\subsection{Fourier--Mukai Equivalence for Unipotent Bundles}

In this section, we establish the categorical link between unipotent bundles on an abelian variety $A$ and the module category of a specific regular local ring. This equivalence serves as the technical bridge between the geometry of the variety and the representation-theoretic definition of wildness.

Let $A$ be a complex abelian variety of dimension $g$, and let $\widehat{A} = \operatorname{Pic}^{0}(A)$ be its dual abelian variety. We denote by $\cP$ the normalized Poincar\'e line bundle on $A \times \widehat{A}$. The Fourier–Mukai functor $\Phi_\mathcal{P} \colon D^b(A) \to D^b(\widehat{A})$ is defined as:
\[
\Phi_\mathcal{P}(E) = \mathbf{R}p_{\widehat{A},*} \left( p_A^* E \otimes \cP \right).
\]
According to the seminal work of Mukai \cite{MukaiDuality}, $\Phi_\mathcal{P}$ is an equivalence of triangulated categories. For our purposes, we focus on the subcategory of unipotent bundles.

\begin{definition}\label{def:unipotent}
A vector bundle $U$ on $A$ is said to be \emph{unipotent} if it admits a filtration $0 = U_0 \subset U_1 \subset \dots \subset U_r = U$ such that each successive quotient $U_i / U_{i-1}$ is isomorphic to the structure sheaf $\cO_A$. We denote the category of such bundles by $\cU$.
\end{definition}

\begin{lemma}\label{lem:acm-unipotent}
Let $A$ be an abelian variety of dimension $g$, let $L$ be an ample
line bundle on $A$, let
\[
  0 = U_0 \subset U_1 \subset \cdots \subset U_r = U
\]
be a unipotent vector bundle on $A$, so that $U_j/U_{j-1} \cong \mathcal{O}_A$
for all $j = 1, \ldots, r$, and let $P \in \mathrm{Pic}^0(A)$ be a nontrivial
line bundle. Then the vector bundle $E := P \otimes U$ is arithmetically
Cohen--Macaulay $($ACM$)$ with respect to the polarization defined by $L$,
i.e.,
\[
  H^i(A,\, E \otimes L^n) = 0 \quad \text{for all } n \in \mathbb{Z}
  \text{ and } 0 < i < g.
\]
\end{lemma}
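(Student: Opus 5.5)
The plan is an induction on the length $r$ of the unipotent filtration, closely following the proof of Lemma~\ref{lem:ACM_Er}. Tensoring the filtration $0=U_0\subset U_1\subset\cdots\subset U_r=U$ with $P$ gives a filtration
\[
0 = P\otimes U_0 \subset P\otimes U_1 \subset \cdots \subset P\otimes U_r = E .
\]
This preserves exactness, since $P$ is locally free, and the successive quotients are $P\otimes(U_j/U_{j-1})\cong P$. Next, I would tensor each short exact sequence
\[
0\to P\otimes U_{j-1}\to P\otimes U_j\to P\to 0
\]
with $L^{\otimes n}$, again using local freeness to keep the sequences exact.

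For the induction, the base case $r=1$ is $E\cong P$. This is ACM by Theorem~\ref{thm:main}, which covers all $n\in\mathbb{Z}$ and $0<i<g$. For the inductive step, take the long exact cohomology sequence of
\[
0\to P\otimes U_{j-1}\otimes L^{\otimes n}\to P\otimes U_j\otimes L^{\otimes n}\to P\otimes L^{\otimes n}\to 0 .
\]
It contains the segment
\[
H^i(A,P\otimes U_{j-1}\otimes L^{\otimes n})\to H^i(A,P\otimes U_j\otimes L^{\otimes n})\to H^i(A,P\otimes L^{\otimes n}).
\]
For $0<i<g$, the left term vanishes by the inductive hypothesis. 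The right term vanishes by Theorem~\ref{thm:main}, or more explicitly by the cohomology table of Theorem~\ref{thm:cohomology-table}. Exactness then forces the middle term to vanish, and at $j=r$ this gives the claim for $E$.

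It is important that the argument only needs vanishing in the same degree $i$ on both sides. That keeps us inside the range $0<i<g$, so we never need the total acyclicity of $P$; that acyclicity matters only for the untwisted refinement. There is no real obstacle here. The only step needing a word of care is the base of the filtration: for a general unipotent $U$ the subsheaves $U_j$ need not be the specific bundles $U_s$ of Lemma~\ref{lem:existence}. However, each $U_j$ is locally free, as an iterated extension of $\cO_A$ by locally free sheaves, and each quotient is exactly $\cO_A$, which is all the argument uses. As a byproduct, the same induction with Theorem~\ref{thm:pic0-acyclicity} shows $H^i(A,E)=0$ for all $0\le i\le g$, though the statement does not require this.
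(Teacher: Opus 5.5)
Your proposal is correct and matches the paper's proof. Both argue by induction on the length of the filtration: tensor $0\to U_{r-1}\to U_r\to\mathcal{O}_A\to 0$ with $P\otimes L^{\otimes n}$, then in each degree $0<i<g$ kill the two flanking cohomology terms using the inductive hypothesis and the rank-one ACM property of $P$. The only cosmetic difference is that the paper re-derives the vanishing of $H^i(A,P\otimes L^{\otimes n})$ case by case ($n>0$, $n=0$, $n<0$), whereas you simply cite Theorem~\ref{thm:main}.
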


\begin{proof}
We proceed by induction on the rank $r = \mathrm{rk}(U)$.

\smallskip
\noindent\textit{Base case} $(r = 1)$.
A unipotent bundle of rank $1$ satisfies $U \cong \mathcal{O}_A$, so
$E = P \otimes U \cong P$. Since $P \in \mathrm{Pic}^0(A) \setminus
\{\mathcal{O}_A\}$, Theorem~\ref{thm:main} gives
$H^i(A, P \otimes L^n) = 0$ for all $n \in \mathbb{Z}$ and $0 < i < g$.

\smallskip
\noindent\textit{Inductive step} $(r \geq 2)$.
Assume the result holds for all unipotent bundles of rank $r - 1$. By the
filtration of $U$, there is a short exact sequence of vector bundles
\begin{equation}\label{eq:unipotent-ses}
  0 \longrightarrow U_{r-1} \longrightarrow U_r
    \longrightarrow \mathcal{O}_A \longrightarrow 0,
\end{equation}
where $U_{r-1}$ is unipotent of rank $r - 1$. Since $P \otimes L^n$ is a
line bundle, tensoring~\eqref{eq:unipotent-ses} with $P \otimes L^n$
preserves exactness and yields
\begin{equation}\label{eq:twisted-ses}
  0 \longrightarrow P \otimes U_{r-1} \otimes L^n
    \longrightarrow P \otimes U_r \otimes L^n
    \longrightarrow P \otimes L^n
    \longrightarrow 0.
\end{equation}
The associated long exact cohomology sequence contains, for each $0 < i < g$,
the segment
\[
  H^i(A,\, P \otimes U_{r-1} \otimes L^n)
  \longrightarrow
  H^i(A,\, P \otimes U_r \otimes L^n)
  \longrightarrow
  H^i(A,\, P \otimes L^n).
\]
We show that both flanking terms vanish, treating the three ranges of $n$
separately.

\smallskip
\noindent\textbf{Case $n > 0$.}
\begin{itemize}
  \item \textit{Left term.} By the inductive hypothesis applied to $U_{r-1}$,
    we have $H^i(A, P \otimes U_{r-1} \otimes L^n) = 0$ for $0 < i < g$.
  \item \textit{Right term.} Since $P \in \mathrm{Pic}^0(A)$, the bundle
    $P \otimes L^n$ is ample by Lemma~\ref{lem:pic0-preserves-ampleness}. Kodaira
    vanishing, together with $\omega_A \cong \mathcal{O}_A$
    (Lemma~\ref{lem:canonical-trivial}), gives
    $H^i(A, P \otimes L^n) = 0$ for all $i > 0$, in particular for
    $0 < i < g$.
\end{itemize}

\smallskip
\noindent\textbf{Case $n = 0$.}
\begin{itemize}
  \item \textit{Left term.} By the inductive hypothesis,
    $H^i(A, P \otimes U_{r-1}) = 0$ for $0 < i < g$.
  \item \textit{Right term.} Since $P \in \mathrm{Pic}^0(A) \setminus
    \{\mathcal{O}_A\}$, Theorem~\ref{thm:pic0-acyclicity} gives $H^i(A, P) = 0$ for all
    $0 \leq i \leq g$, in particular for $0 < i < g$.
\end{itemize}

\smallskip
\noindent\textbf{Case $n < 0$.}
\begin{itemize}
  \item \textit{Left term.} By the inductive hypothesis,
    $H^i(A, P \otimes U_{r-1} \otimes L^n) = 0$ for $0 < i < g$.
  \item \textit{Right term.} By Serre duality
    (Corollary~\ref{cor:serre-duality}) and the triviality
    $\omega_A \cong \mathcal{O}_A$,
    \[
      H^i(A,\, P \otimes L^n)^\vee \;\cong\;
      H^{g-i}(A,\, P^{-1} \otimes L^{-n}).
    \]
    Since $n < 0$ we have $-n > 0$, so $P^{-1} \otimes L^{-n}$ is ample
    (Lemma~\ref{lem:pic0-preserves-ampleness} applied to $P^{-1} \in \mathrm{Pic}^0(A)$).
    Kodaira vanishing gives
    $H^{g-i}(A, P^{-1} \otimes L^{-n}) = 0$ whenever $g - i > 0$,
    i.e., whenever $i < g$. Hence $H^i(A, P \otimes L^n) = 0$
    for $0 < i < g$.
\end{itemize}

\smallskip
In all three cases both flanking terms vanish for $0 < i < g$. By exactness
of the long exact cohomology sequence of~\eqref{eq:twisted-ses}, we conclude
\[
  H^i(A,\, P \otimes U_r \otimes L^n) = 0
  \quad \text{for all } n \in \mathbb{Z} \text{ and } 0 < i < g.
\]
Hence $E = P \otimes U_r$ is ACM with respect to $L$. This completes the
induction.
\end{proof}

\begin{theorem}\label{thm:unipotent-equivalence}
Let $\mathcal{U}$ be the category of unipotent vector bundles on $A$. There exists 
an equivalence of categories:
\[
\mathcal{U} \;\simeq\; \mathrm{flmod}\!\left(k[[x_1, \ldots, x_g]]\right),
\]
where $\mathrm{flmod}(R)$ denotes the category of finite-length modules over 
the ring $R$.
\end{theorem}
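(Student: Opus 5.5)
The plan is to realize the equivalence through the restriction of the Fourier--Mukai functor $\Phi_{\mathcal P}$ to $\mathcal U$. The target is the category $\mathrm{Coh}_{0}(\widehat A)$ of coherent sheaves on $\widehat A$ set-theoretically supported at $0_{\widehat A}$. This category is identified with finite-length modules over the completed local ring $\widehat{\mathcal O}_{\widehat A,0}$. Since $\widehat A$ is smooth of dimension $g$ over the algebraically closed field $k$, the Cohen structure theorem gives $\widehat{\mathcal O}_{\widehat A,0}\cong k[[x_1,\dots,x_g]]$. The identification uses the standard fact that a finite-length $\mathcal O_{\widehat A,0}$-module is automatically a module over the completion and that completion induces an equivalence on finite-length modules. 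Under it, a sheaf supported at the origin corresponds to its stalk.

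\textbf{Step 1 (the functor lands in the right place).} I will define
\[
\Psi:\mathcal U\to \mathrm{Coh}_0(\widehat A),\qquad \Psi(U)=\mathcal H^{g}\bigl(\Phi_{\mathcal P}(U)\bigr).
\]
I then show that $\Phi_{\mathcal P}(U)\simeq \Psi(U)[-g]$ with $\Psi(U)$ of length $\rk U$. This is exactly the induction of Claim~\ref{claim:FM-Us}, which used only a unipotent filtration and Theorem~\ref{thm:mukai-standard}, so it applies verbatim to any $U\in\mathcal U$.

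\textbf{Step 2 (full faithfulness).} For $U,V\in\mathcal U$, Theorem~\ref{thm:mukai-standard} gives
\[
\Hom_A(U,V)\cong \Hom_{D^b(\widehat A)}(\Psi(U)[-g],\Psi(V)[-g])=\Hom_{\widehat A}(\Psi(U),\Psi(V)).
\]
Hence $\Psi$ is fully faithful. Since $\mathcal U$ is a full subcategory of vector bundles, these are the correct morphisms.

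\textbf{Step 3 (essential surjectivity).} This is the main obstacle. Let $\FF$ be a finite-length sheaf supported at $0$. I need to show that the object $\Phi_{\mathcal P}^{-1}(\FF[-g])$ is a unipotent vector bundle sitting in degree $0$. I will argue by induction on the length $\ell(\FF)$, using a composition series $0\to \FF'\to\FF\to k(0)\to 0$. Such a series exists because a module of finite length supported at one point has a simple quotient isomorphic to $k(0)$.

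The inverse functor is Mukai's $\Phi_{\mathcal P^\vee}$ up to $[g]$ and $(-1)^*$. Applying it yields a triangle
\[
U'\to \Phi_{\mathcal P}^{-1}(\FF[-g])\to \OO_A\to U'[1],
\]
where $U'$ is unipotent by induction and $\Phi^{-1}_{\mathcal P}(k(0)[-g])\cong \OO_A$ by Theorem~\ref{thm:mukai-standard}. The long exact sequence of cohomology sheaves shows that the middle object is a sheaf $U$ in degree $0$, and that $U$ is an extension of $\OO_A$ by $U'$. Such an extension is locally free, and the filtration of $U'$ extended by $U\to\OO_A$ exhibits $U$ as unipotent. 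The delicate point is checking that the triangle's connecting morphism corresponds to an honest extension class in $\Ext^1_A(\OO_A,U')$, so that no shifted cohomology appears. This holds because both outer terms are sheaves in degree $0$.

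\textbf{Step 4 (conclusion).} Composing $\Psi$ with the stalk-and-completion equivalence $\mathrm{Coh}_0(\widehat A)\simeq \mathrm{flmod}(k[[x_1,\dots,x_g]])$ gives the claimed equivalence $\mathcal U\simeq \mathrm{flmod}(k[[x_1,\ldots,x_g]])$. As a consistency check, this matches rank with length, as in Claim~\ref{claim:FM-Us}.
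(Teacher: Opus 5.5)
Your proposal is correct and follows the same chain of equivalences as the paper's appendix (Theorem~\ref{thm:appendix-equivalence}): Fourier--Mukai onto sheaves supported at $0_{\widehat A}$, then stalks, then completion via the Cohen structure theorem. The only difference is that you actually prove the first link (concentration in degree $g$ by the filtration induction of Claim~\ref{claim:FM-Us}, full faithfulness from Mukai's equivalence, and essential surjectivity by induction on length using that the heart is extension-closed), whereas the paper simply cites Mukai for it; your use of set-theoretic rather than scheme-theoretic support at the origin is also the correct choice.
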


\begin{proof}
See Theorem~\ref{thm:appendix-equivalence} in the Appendix, where the full 
chain of equivalences
\[
\mathcal{U} \;\simeq\; \mathrm{Coh}_{\{0\}}(\widehat{A}) 
\;\simeq\; \mathrm{flmod}(\mathcal{O}_{\widehat{A},0})
\;\simeq\; \mathrm{flmod}(k[[x_1,\ldots,x_g]])
\]
is established via the Fourier--Mukai transform, the stalk equivalence at the 
origin, and the Cohen Structure Theorem.
\end{proof}

\begin{theorem}\label{thm:acm-wild}
Let $A$ be an abelian variety of dimension $g \geq 2$, and let $L$ be 
an ample line bundle on $A$. The category of arithmetically Cohen--Macaulay 
$($ACM$)$ vector bundles on $A$ with respect to the polarization defined by 
$L$ is of wild representation type.
\end{theorem}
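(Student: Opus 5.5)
The plan is to produce, inside the category of ACM bundles, an exact embedding of a category already known to be wild, and to do this through the functor
\[
\Psi_P:\mathcal U\longrightarrow \{\text{ACM bundles on }(A,L)\},\qquad U\longmapsto P\otimes U ,
\]
where $P\in\operatorname{Pic}^0(A)\setminus\{\mathcal O_A\}$ is fixed. Lemma~\ref{lem:acm-unipotent} shows that $\Psi_P$ takes values in ACM bundles. Since tensoring by a line bundle is an autoequivalence of $\Coh A$, $\Psi_P$ is fully faithful and exact, and it preserves and reflects indecomposability and isomorphism classes. Composing with the equivalence of Theorem~\ref{thm:unipotent-equivalence}, we get a fully faithful exact functor
\[
\mathrm{flmod}\bigl(k[[x_1,\dots,x_g]]\bigr)\;\simeq\;\mathcal U\;\xrightarrow{\ \Psi_P\ }\;\mathrm{ACM}(A,L).
\]
It therefore suffices to show that $\mathrm{flmod}(k[[x_1,\dots,x_g]])$ is wild for $g\ge 2$, in the sense used in \cite{FaenziPonsLlopis}: for every $n$ one needs $n$-dimensional families of pairwise non-isomorphic indecomposables of a fixed rank, or equivalently a representation embedding of $\mathrm{mod}\,k\langle u,v\rangle$.

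The key algebraic step is the classical wildness of the commutative local algebra $\Lambda=k[x,y]/(x,y)^3$, which is a quotient of $k[[x_1,\dots,x_g]]$ by the ideal $(x_3,\dots,x_g)+(x_1,x_2)^3$. The category $\mathrm{mod}\,\Lambda$ is a full exact subcategory of $\mathrm{flmod}(k[[x_1,\dots,x_g]])$, so wildness passes up. For $\Lambda$ I would use the explicit embedding. Given a pair of $n\times n$ matrices $(X,Y)$, let $x$ act on $V=k^n\oplus k^n\oplus k^n$ by
\[
\begin{pmatrix}0&0&0\\ I&0&0\\ 0&I&0\end{pmatrix},
\]
and let $y$ act by a block matrix that is strictly lower triangular with respect to the same grading and has entries built from $I$, $X$ and $Y$ in the lower blocks. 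The blocks are chosen so that $xy=yx$ and every cubic monomial acts by zero. One then checks that $\mathrm{Hom}_\Lambda(V_{(X,Y)},V_{(X',Y')})$ is identified with the space of intertwiners $(X,Y)\to(X',Y')$. In particular, indecomposable pairs give indecomposable modules, and non-isomorphic pairs give non-isomorphic modules. Alternatively, I would cite Drozd's theorem that $k[x,y]/(x,y)^3$ is wild, and avoid the matrix bookkeeping.

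Transporting the families back through $\Psi_P$ gives the result. For instance, the $2$-parameter family of pairs $(\lambda,\mu)\in k^2$ with $n=1$, and more generally $n$-parameter families, produce unipotent bundles $U$ of rank $3n$ with pairwise non-isomorphic indecomposable twists $P\otimes U$. Each twist has rank $\ell(\text{module})$, because the Fourier--Mukai correspondence sends length to rank: by Claim~\ref{claim:FM-Us}, $\ell(\mathcal F_s)=s$. Each twist is ACM by Lemma~\ref{lem:acm-unipotent}. Varying $P$ as well adds $g$ more parameters, as in Corollary~\ref{cor:families}, but this is not needed.

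I expect the main obstacle to be the wildness statement for $\Lambda$, and specifically the full-faithfulness check. One has to verify that every $\Lambda$-linear map between the constructed modules is block upper-triangular with diagonal blocks determined by an intertwiner. A secondary point is to confirm that the functor of Theorem~\ref{thm:unipotent-equivalence} is exact. If it were merely an equivalence of additive categories, sending short exact sequences to triangles on the Fourier--Mukai side, indecomposability and isomorphism would still transfer, and that is all the wildness definition requires.
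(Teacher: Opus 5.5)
Your argument follows the paper's proof. You use the exact, fully faithful functor $U\mapsto P\otimes U$ from unipotent bundles into ACM bundles (Lemma~\ref{lem:acm-unipotent}), the identification $\mathcal U\simeq\mathrm{flmod}(k[[x_1,\dots,x_g]])$ of Theorem~\ref{thm:unipotent-equivalence}, and a classical wildness theorem for a two-variable commutative local algebra. The only difference is the last step. The paper restricts scalars along $k[[x_1,\dots,x_g]]\twoheadrightarrow k[[x_1,x_2]]$ and cites Gelfand--Ponomarev for pairs of commuting nilpotent operators. You pass to the finite-dimensional quotient $\Lambda=k[x,y]/(x,y)^3$ and cite Drozd. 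That is equally valid, and slightly cleaner, since the tame--wild framework is formulated for finite-dimensional algebras. With that citation your proof is complete.

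Do not use the explicit embedding you sketch, because in the shape you describe it cannot work. If $x$ acts by the block shift $S$ with identity blocks, every strictly lower-triangular $y$ commuting with $S$ has the form
\[
y=\begin{pmatrix}0&0&0\\ B&0&0\\ C&B&0\end{pmatrix}.
\]
The $\Lambda$-linear maps $V_{(B,C)}\to V_{(B',C')}$ are then exactly the matrices $\begin{pmatrix}F_0&0&0\\ F_1&F_0&0\\ F_2&F_1&F_0\end{pmatrix}$ with $F_0B=B'F_0$ and $F_0C+F_1B=C'F_0+B'F_1$, where $F_2$ is arbitrary.

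\begin{itemize}
\item $\mathrm{Hom}_\Lambda$ always contains the maps with $F_0=F_1=0$, so it is not the space of intertwiners.
\item Taking $F_0=I$ and $F_1=G$ gives $V_{(B,C)}\cong V_{(B,\,C+[G,B])}$ for every $G$. So $C$ is only detected modulo $[\,\cdot\,,B]$.
\item Concretely, take $B=X=\mathrm{diag}(\lambda_1,\lambda_2)$ with $\lambda_1\neq\lambda_2$, and $C=Y=\begin{pmatrix}0&1\\1&0\end{pmatrix}$. The pair $(X,Y)$ is simple. Yet a suitable $G$ kills $Y$, so $V_{(X,Y)}\cong V_{(X,0)}=V_{(\lambda_1,0)}\oplus V_{(\lambda_2,0)}$ is decomposable.
\end{itemize}

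Only your $n=1$ family survives. There the cyclic modules with $y=\lambda x+\mu x^2$ are pairwise non-isomorphic, giving a $2$-parameter family of rank-$3$ ACM bundles. Families of arbitrarily large dimension genuinely require the cited wildness theorem (Drozd for $\Lambda$, or Gelfand--Ponomarev as in the paper).
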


\begin{proof}
We construct a full exact embedding of a wild category into the category of 
ACM bundles.

\textbf{The category of unipotent bundles is wild.}\\
Let $\mathcal{U}$ denote the exact category of unipotent vector bundles on
$A$. By Theorem~\ref{thm:unipotent-equivalence}, there is an equivalence of abelian
categories
\[
  \mathcal{U} \simeq \mathrm{flmod}\bigl(k[[x_1,\dots,x_g]]\bigr).
\]
When $g \ge 2$, the canonical surjection
$k[[x_1,\dots,x_g]] \twoheadrightarrow k[[x_1,x_2]]$ induces a full exact
restriction-of-scalars embedding
\[
  \mathrm{flmod}\bigl(k[[x_1,x_2]]\bigr)
  \hookrightarrow
  \mathrm{flmod}\bigl(k[[x_1,\dots,x_g]]\bigr).
\]
By Theorem~\ref{thm:flmod_equivalence}, the category
$\mathrm{flmod}(k[[x_1,x_2]])$ is equivalent to the category of
finite-dimensional $k$-vector spaces equipped with a pair of commuting
nilpotent endomorphisms. By the classical result of Gelfand and
Ponomarev~\cite{gelfand-ponomarev}, the classification of pairs of commuting
nilpotent operators is a wild problem. In the language of the tame--wild
dichotomy~\cite{Drozd79}, this means there is a representation embedding of the
category of finite-dimensional modules over the free algebra
$k\langle u,v\rangle$ into this category, preserving indecomposability and
reflecting isomorphism classes. Therefore
$\mathrm{flmod}(k[[x_1,\dots,x_g]])$ is of wild representation type for all
$g \ge 2$and, consequently, $\mathcal{U}$ is wild.

\textbf{Unipotent bundles twisted by $P$ are ACM.}\\
Fix a nontrivial line bundle $P \in \mathrm{Pic}^0(A) \setminus \{\mathcal{O}_A\}$. 
Define the functor
\[
\Phi \colon \mathcal{U} \;\longrightarrow\; \mathrm{Coh}(A), 
\qquad U \;\longmapsto\; P \otimes U.
\]
This functor is exact, since $P$ is locally free. It is fully faithful: 
indeed, the functor $- \otimes P \colon \mathrm{Coh}(A) \to \mathrm{Coh}(A)$ 
is an autoequivalence of categories (as $P$ is an invertible sheaf), hence 
fully faithful, and its restriction $\Phi = (-\otimes P)\big|_{\mathcal{U}}$ 
is therefore fully faithful as well. By Lemma~\ref{lem:acm-unipotent}, every 
bundle in the essential image of $\Phi$ is ACM with respect to $L$. Thus 
$\Phi$ is a full exact embedding of $\mathcal{U}$ into the category 
$\mathrm{ACM}(A,L)$ of ACM vector bundles on $(A,L)$.

\medskip

\noindent\textit{Conclusion.}
Since $\mathcal{U}$ is wild (Step~1) and embeds fully faithfully and exactly 
into $\mathrm{ACM}(A,L)$ via $\Phi$ (Step~2), the category $\mathrm{ACM}(A,L)$ 
is of wild representation type.
\end{proof}

\section{The Pareschi--Popa Framework: \texorpdfstring{$M$}{M}-Regularity and ACM Bundles}

Mukai regularity (also called $M$-regularity) is a useful mathematical concept based on the Fourier-Mukai transform. It acts as the abelian version of the well-known Castelnuovo-Mumford regularity, giving us a way to measure the positivity of sheaves on abelian varieties. To build a bridge between $M$-regularity and Arithmetically Cohen-Macaulay (ACM) bundles, we will start with the foundational definitions from~\cite{PareschiPopa}.

\begin{definition}[$M$-Regularity]
A coherent sheaf $\mathcal{F}$ on an abelian variety $A$ of dimension $g$ is $M$-regular if the codimension of the support of its higher Fourier transforms satisfies $\operatorname{codim}(S^i(\mathcal{F})) > i$ for all $i > 0$.
\end{definition}

\begin{definition}[I.T. with index 0]
A sheaf $\mathcal{F}$ satisfies the Index Theorem (I.T.) with index 0 (denoted as $IT_0$) if its higher cohomology vanishes for all degree-zero twists; that is, $h^i(A, \mathcal{F} \otimes \alpha) = 0$ for all $\alpha \in \operatorname{Pic}^0(A)$ and all $i > 0$.
\end{definition}

A foundational result by Pareschi and Popa explains how these properties interact under tensor products. We state their result here without proof:

\begin{proposition}[Pareschi--Popa \cite{PareschiPopa}] \label{prop:pareschi_popa}
If $\mathcal{F}$ is an $M$-regular coherent sheaf and $\mathcal{H}$ is a locally free sheaf satisfying I.T. with index 0, then their tensor product $\mathcal{F} \otimes \mathcal{H}$ also satisfies I.T. with index 0.
\end{proposition}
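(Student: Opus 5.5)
The plan is to convert the twisted cohomology of $\mathcal F\otimes\mathcal H$ into a hypercohomology computation on $\widehat A$, using the Fourier--Mukai equivalence of Theorem~\ref{thm:mukai-standard}, and then to kill it with the support condition in the definition of $M$-regularity. First I reduce to a statement about $\mathrm{Ext}$ groups. Since $\mathcal H$ is locally free, for every $\alpha\in\operatorname{Pic}^0(A)$ and every $i$ we have
\[
H^i(A,\mathcal F\otimes\mathcal H\otimes\alpha)\cong \Ext^i_A\bigl(\mathcal H^\vee\otimes\alpha^{-1},\mathcal F\bigr),
\]
as in Step~1 of Lemma~\ref{lem:existence}. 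Next, because $\mathcal H$ satisfies $IT_0$, base change shows that $\Phi_{\mathcal P}(\mathcal H)\simeq\widehat{\mathcal H}$ is a locally free sheaf on $\widehat A$ concentrated in degree $0$. By Mukai's duality between $\Phi_{\mathcal P}$ and Grothendieck--Serre duality, $\mathcal H^\vee$ then satisfies $WIT_g$. Its transform is, up to the sign $(-1)^*_{\widehat A}$, the dual bundle $\widehat{\mathcal H}^\vee$ placed in degree $g$. Twisting by $\alpha^{-1}$ corresponds to a translation on $\widehat A$ (Lemma~\ref{lem:tensor-translation-FM}), which preserves local freeness. So for each $\alpha$ there is a locally free sheaf $N_\alpha$ on $\widehat A$ with $\Phi_{\mathcal P}(\mathcal H^\vee\otimes\alpha^{-1})\simeq N_\alpha[-g]$.

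Second, I apply full faithfulness of $\Phi_{\mathcal P}$:
\[
\Ext^i_A(\mathcal H^\vee\otimes\alpha^{-1},\mathcal F)\cong \operatorname{Hom}_{D^b(\widehat A)}\bigl(N_\alpha[-g],\Phi_{\mathcal P}(\mathcal F)[i]\bigr)=\mathbb H^{i+g}\bigl(\widehat A,\ N_\alpha^\vee\otimes\Phi_{\mathcal P}(\mathcal F)\bigr).
\]
I compute the right-hand side with the hypercohomology spectral sequence
\[
E_2^{p,q}=H^p\bigl(\widehat A,\ N_\alpha^\vee\otimes R^q\Phi_{\mathcal P}(\mathcal F)\bigr)\Longrightarrow \mathbb H^{p+q}.
\]
The sheaf $R^q\Phi_{\mathcal P}(\mathcal F)$ is supported on a closed subset contained in the cohomological support locus $S^q(\mathcal F)$, by base change. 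Since $N_\alpha^\vee$ is locally free, Grothendieck vanishing gives $E_2^{p,q}=0$ whenever $p>\dim S^q(\mathcal F)$.

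Third, I check the degrees. We need $\mathbb H^{i+g}=0$ for $i>0$, so it suffices that $E_2^{p,q}=0$ whenever $p+q>g$. For $q=0$ this holds trivially, since then $p>g=\dim\widehat A$. For $q>0$, $M$-regularity gives $\operatorname{codim}S^q(\mathcal F)>q$, that is $\dim S^q(\mathcal F)<g-q$. Hence $p>g-q>\dim S^q(\mathcal F)$ and the term vanishes. (Terms with $q>g$ vanish because $\Phi_{\mathcal P}(\mathcal F)$ has amplitude $[0,g]$.) Every $E_2^{p,q}$ with $p+q>g$ is zero, so the abutment vanishes in the required degrees, which gives $IT_0$ for $\mathcal F\otimes\mathcal H$.

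The main obstacle is the second part of the first step: pinning down precisely that $\mathcal H^\vee\otimes\alpha^{-1}$ is $WIT_g$ with a locally free transform. This requires Mukai's compatibility $\Phi_{\mathcal P}(\mathcal E^\vee)\simeq (-1)^*\Phi_{\mathcal P}(\mathcal E)^\vee[-g]$, and the signs and shifts must be tracked carefully so that the final degree bound lands exactly on $p+q>g$. I would first verify the shift conventions on the test case $\mathcal H=L$ ample, where $\widehat L$ is the well-known vector bundle of rank $h^0(L)$. If the conventions produce an off-by-one, I would instead use the equivalent formulation with $\Phi_{\mathcal P}$ applied to $\mathcal F^\vee$ and Serre duality (Corollary~\ref{cor:serre-duality}).
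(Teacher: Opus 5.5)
The paper gives no proof of this proposition; it quotes it from Pareschi--Popa. So the relevant comparison is with the cited source. Your argument is correct, and it is essentially the standard Fourier--Mukai proof of that result: rewrite $H^i(A,\mathcal F\otimes\mathcal H\otimes\alpha)$ as $\Ext^i_A(\mathcal H^\vee\otimes\alpha^{-1},\mathcal F)$, transport it to $\widehat A$ using that $\mathcal H^\vee\otimes\alpha^{-1}$ has a locally free transform concentrated in degree $g$, and kill $\mathbb H^{i+g}$ by bounding the dimensions of the supports of $R^q\Phi_{\mathcal P}(\mathcal F)$.

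The step you flag as the main obstacle works exactly as you wrote it. Grothendieck duality for $p_{\widehat A}$, whose relative dualizing sheaf is trivial, together with $\mathcal P^{-1}\cong(1_A\times(-1_{\widehat A}))^*\mathcal P$, gives $\Phi_{\mathcal P}(E^\vee)\simeq(-1_{\widehat A})^*\Phi_{\mathcal P}(E)^\vee[-g]$ for every $E\in D^b(A)$. So the shift lands precisely on $\mathbb H^{i+g}$, and you need no fallback through $\mathcal F^\vee$. That fallback would be awkward anyway, since $\mathcal F$ is only coherent.

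You can also bypass the translation lemma: $\mathcal H\otimes\alpha$ is itself $IT_0$, so the same reasoning applies to it directly.

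The inclusion $\operatorname{Supp}R^q\Phi_{\mathcal P}(\mathcal F)\subseteq V^q(\mathcal F)$ does follow from cohomology and base change, since $p_A^*\mathcal F\otimes\mathcal P$ is automatically flat over $\widehat A$. It is tautological if $S^q$ is read literally as in the paper's definition, namely as the support of the higher transform.

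Finally, your degree count only uses $\dim S^q(\mathcal F)\le g-q$, because the inequality $p>g-q$ is already strict. So the same proof gives the stronger statement under the weaker hypothesis $\operatorname{codim}S^q(\mathcal F)\ge q$ for $q>0$, that is, for GV sheaves.
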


\subsection*{Connecting \texorpdfstring{$M$}{M}-Regularity to ACM Bundles}
Using the Pareschi-Popa framework, we can build a direct bridge to the ACM property.

\begin{theorem}\label{thm:acm_m_regular}
    Let $\mathcal{F}$ be a vector bundle on an abelian variety $A$ of dimension $g$, and let $L$ be an ample line bundle on $A$. Assume that both $\mathcal{F}$ and $\mathcal{F}^\vee \otimes L$ are $M$-regular. If the intermediate cohomology groups vanish such that $H^i(A, \mathcal{F}) = 0$ and $H^i(A, \mathcal{F}^\vee \otimes L) = 0$ for all $0 < i < g$, then $\mathcal{F}$ is Arithmetically Cohen-Macaulay (ACM) with respect to $L$.
\end{theorem}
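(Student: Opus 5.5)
The plan is to split the ACM condition $H^i(A,\mathcal F\otimes L^{\otimes m})=0$ for $0<i<g$ according to the value of $m$. Positive twists will be handled by Proposition~\ref{prop:pareschi_popa} applied to $\mathcal F$. Negative twists will be reduced by Serre duality to positive twists of $\mathcal F^\vee\otimes L$, to which Proposition~\ref{prop:pareschi_popa} applies in the same way. The two boundary twists $m=0$ and $m=-1$ are supplied directly by the cohomological hypotheses.

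\emph{The input to Proposition~\ref{prop:pareschi_popa}.} First I check that for every $k\geq 1$ the line bundle $L^{\otimes k}$ is locally free and satisfies I.T.\ with index $0$. For every $\alpha\in\operatorname{Pic}^0(A)$, Lemma~\ref{lem:pic0-preserves-ampleness} shows that $L^{\otimes k}\otimes\alpha$ is ample. Kodaira vanishing, together with $\omega_A\simeq\OO_A$ (Lemma~\ref{lem:canonical-trivial}), then gives $H^i(A,L^{\otimes k}\otimes\alpha)=0$ for all $i>0$.

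\emph{The case $m>0$.} Since $\mathcal F$ is $M$-regular, Proposition~\ref{prop:pareschi_popa} with $\mathcal H=L^{\otimes m}$ shows that $\mathcal F\otimes L^{\otimes m}$ satisfies $IT_0$. Taking $\alpha=\OO_A$ gives $H^i(A,\mathcal F\otimes L^{\otimes m})=0$ for all $i>0$, and in particular for $0<i<g$.

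\emph{The case $m=0$.} This is exactly the hypothesis $H^i(A,\mathcal F)=0$ for $0<i<g$.

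\emph{The case $m<0$.} Corollary~\ref{cor:serre-duality}, applied to the locally free sheaf $\mathcal F\otimes L^{\otimes m}$ with $\omega_A$ trivial, gives
\[
H^i(A,\mathcal F\otimes L^{\otimes m})^\vee\simeq H^{g-i}\bigl(A,(\mathcal F^\vee\otimes L)\otimes L^{\otimes(-m-1)}\bigr).
\]
Note that $0<i<g$ if and only if $0<g-i<g$. There are two subcases.
\begin{itemize}
\item If $m=-1$, the right-hand side is $H^{g-i}(A,\mathcal F^\vee\otimes L)$, which vanishes by hypothesis.
\item If $m\leq -2$, then $-m-1\geq 1$. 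By $M$-regularity of $\mathcal F^\vee\otimes L$ and Proposition~\ref{prop:pareschi_popa} with $\mathcal H=L^{\otimes(-m-1)}$, the sheaf $\mathcal F^\vee\otimes L^{\otimes(-m)}$ satisfies $IT_0$. Hence its $H^{g-i}$ vanishes whenever $g-i>0$.
\end{itemize}

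Combining the three cases yields the ACM property of $\mathcal F$ with respect to $L$.

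The only delicate point is the bookkeeping at the boundary twists. The $M$-regularity argument covers only strictly positive powers of $L$, because $\OO_A$ itself is not $IT_0$. So $m=0$ and $m=-1$ cannot come from Proposition~\ref{prop:pareschi_popa}, which is precisely why the two explicit vanishing hypotheses appear in the statement. I would also confirm that Serre duality applies in the form $H^i(\mathcal E)^\vee\simeq H^{g-i}(\mathcal E^\vee)$ for vector bundles. This follows from Corollary~\ref{cor:serre-duality}, since $\Ext^{g-i}(\mathcal E,\OO_A)=H^{g-i}(\mathcal E^\vee)$ when $\mathcal E$ is locally free.
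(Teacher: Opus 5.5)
Your proposal is correct and follows essentially the same route as the paper. Positive twists come from the Pareschi--Popa $IT_0$ result, twists $m\le -2$ come from Serre duality and the factorization $\mathcal{F}^\vee\otimes L^{\otimes(-m)}\cong(\mathcal{F}^\vee\otimes L)\otimes L^{\otimes(-m-1)}$, and the boundary twists $m=0,-1$ come from the explicit hypotheses; your extra check that $L^{\otimes k}$ ($k\ge 1$) is a locally free $IT_0$ sheaf, so that Proposition~\ref{prop:pareschi_popa} applies as stated, is a detail the paper leaves implicit.
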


\begin{proof}
    By definition, the vector bundle $\mathcal{F}$ is ACM with respect to $L$ if $H^i(A, \mathcal{F} \otimes L^{\otimes k}) = 0$ for all $0 < i < g$ and for all $k \in \mathbb{Z}$. We verify this vanishing by dividing the twists into three cases based on the integer $k$.
    
    \textbf{Case 1: Strictly positive twists ($k \ge 1$).} 
    By a foundational result of Pareschi and Popa, the tensor product of an $M$-regular sheaf with an ample line bundle satisfies the $IT_0$ condition. Since $\mathcal{F}$ is assumed to be $M$-regular and $L^{\otimes k}$ is an ample line bundle for any $k \ge 1$, the bundle $\mathcal{F} \otimes L^{\otimes k}$ is $IT_0$. Consequently, its higher cohomology vanishes, yielding $H^i(A, \mathcal{F} \otimes L^{\otimes k}) = 0$ for all $i > 0$.
    
    \textbf{Case 2: Strictly negative twists ($k \le -2$).} 
    Let $k = -m$ with $m \ge 2$. Since $A$ is an abelian variety, its canonical bundle is trivial ($\omega_A \cong \mathcal{O}_A$). By Serre duality, we have
    \begin{equation*}
        H^i(A, \mathcal{F} \otimes L^{-m}) \cong H^{g-i}(A, \mathcal{F}^\vee \otimes L^{\otimes m})^\vee.
    \end{equation*}
    We may rewrite the twisted dual bundle as
    \begin{equation*}
        \mathcal{F}^\vee \otimes L^{\otimes m} \cong (\mathcal{F}^\vee \otimes L) \otimes L^{\otimes (m-1)}.
    \end{equation*}
    By hypothesis, $\mathcal{F}^\vee \otimes L$ is an $M$-regular bundle. Furthermore, since $m \ge 2$, the line bundle $L^{\otimes (m-1)}$ is ample. Applying the Pareschi-Popa theorem again, the tensor product $(\mathcal{F}^\vee \otimes L) \otimes L^{\otimes (m-1)}$ satisfies the $IT_0$ condition. Therefore, $H^{g-i}(A, \mathcal{F}^\vee \otimes L^{\otimes m}) = 0$ for $g-i > 0$, which ensures $H^i(A, \mathcal{F} \otimes L^{-m}) = 0$ for all $i < g$.
    
    \textbf{Case 3: The boundary twists ($k = 0$ and $k = -1$).} 
    For $k=0$, the ACM condition requires $H^i(A, \mathcal{F}) = 0$ for $0 < i < g$, which holds strictly by our explicit assumption. 
    
    For $k=-1$, the condition requires $H^i(A, \mathcal{F} \otimes L^{-1}) = 0$ for $0 < i < g$. Applying Serre duality again, we obtain
    \begin{equation*}
        H^i(A, \mathcal{F} \otimes L^{-1}) \cong H^{n-i}(A, \mathcal{F}^\vee \otimes L)^\vee.
    \end{equation*}
    By our explicit assumption, $H^j(A, \mathcal{F}^\vee \otimes L) = 0$ for all $0 < j < n$. Setting $j = g-i$, we immediately see that the required cohomology groups vanish.
    
    Since $H^i(A, \mathcal{F} \otimes L^{\otimes k}) = 0$ for all intermediate dimensions $0 < i < g$ and all $k \in \mathbb{Z}$, we conclude that $\mathcal{F}$ is ACM with respect to $L$.
\end{proof}

\begin{remark}[Necessity of the Untwisted Assumption]
\label{rem:mreg_necessity}
M-regularity alone is not enough to guarantee that a bundle is ACM. To be ACM, a bundle $\mathcal{F}$ must have vanishing intermediate cohomology for all twists, including the untwisted case where the polarizing line bundle vanishes. This requires $H^i(A, \mathcal{F}) = 0$, which means the trivial line bundle $\mathcal{O}_A$ cannot lie in the cohomological support locus $V^i(\mathcal{F})$ (the ``bad locus'' where cohomology fails to vanish).

Pareschi and Popa showed that if a bundle is M-regular, this bad locus $V^i(\mathcal{F})$ is small – specifically, its codimension is strictly greater than $i$. However, this is only a restriction on the \emph{size} (dimension) of the locus, not its \emph{location} inside the dual variety. Just as a very thin curve can still pass directly through the center of a room, M-regularity does not mathematically prevent $V^i(\mathcal{F})$ from containing the identity point $\mathcal{O}_A$. Consequently, the boundary vanishings required for the ACM property in Theorem \ref{thm:acm_m_regular} do not follow formally from M-regularity alone, making the untwisted assumption necessary.
\end{remark}

\begin{theorem}
Let $(A, L)$ be a polarized abelian variety of dimension $g \geq 2$, and let $\mathcal{F}$ be a vector bundle on $A$. If both $\mathcal{F}$ and the twisted dual bundle $\mathcal{F}^\vee \otimes L$ satisfy the $IT_0$ condition, then $\mathcal{F}$ is an arithmetically Cohen-Macaulay (ACM) vector bundle with respect to $L$.
\end{theorem}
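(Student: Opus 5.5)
The plan is to handle the twists $\mathcal{F}\otimes L^{\otimes k}$ by splitting into $k\ge 0$ and $k\le -1$. Each range is treated with the $IT_0$ hypothesis on $\mathcal{F}$ or on $\mathcal{F}^\vee\otimes L$, respectively, combined with the standard fact that $IT_0$ is preserved under tensoring by ample line bundles. Rather than passing through Theorem~\ref{thm:acm_m_regular} (which needs $M$-regularity), I would argue directly, though one could also note that $IT_0$ sheaves are $M$-regular and invoke Theorem~\ref{thm:acm_m_regular} verbatim.

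\textbf{Case $k=0$.} Taking $\alpha=\mathcal{O}_A$ in the definition of $IT_0$ gives $H^i(A,\mathcal{F})=0$ for all $i>0$, in particular for $0<i<g$.

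\textbf{Case $k\ge 1$.} I want $\mathcal{F}\otimes L^{\otimes k}$ to be $IT_0$. Since $L^{\otimes k}$ is an ample line bundle, it satisfies $IT_0$: for every $\alpha\in\Pic^0(A)$, the bundle $L^{\otimes k}\otimes\alpha$ is ample by Lemma~\ref{lem:pic0-preserves-ampleness}, so Kodaira vanishing with $\omega_A\cong\mathcal{O}_A$ kills its higher cohomology. An $IT_0$ sheaf is $M$-regular, because its higher cohomology support loci $V^i$, and hence the supports $S^i$ of its higher Fourier transforms, are empty. Proposition~\ref{prop:pareschi_popa}, applied with the $M$-regular sheaf $\mathcal{F}$ and the locally free $IT_0$ sheaf $L^{\otimes k}$, then gives that $\mathcal{F}\otimes L^{\otimes k}$ is $IT_0$. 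Taking $\alpha$ trivial yields the required vanishing.

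\textbf{Case $k=-m\le -1$.} Serre duality (Corollary~\ref{cor:serre-duality}) gives
\[
H^i(A,\mathcal{F}\otimes L^{-m})^\vee\cong H^{g-i}\bigl(A,(\mathcal{F}^\vee\otimes L)\otimes L^{\otimes(m-1)}\bigr).
\]
For $m=1$ the right side vanishes for $g-i>0$ directly by the $IT_0$ hypothesis on $\mathcal{F}^\vee\otimes L$. For $m\ge 2$ I repeat the $k\ge1$ argument with $\mathcal{F}^\vee\otimes L$ in place of $\mathcal{F}$. Since $0<i<g$ forces $g-i>0$, all intermediate cohomology vanishes.

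The only genuinely nonformal step is the preservation of $IT_0$ under ample twists. It rests on Proposition~\ref{prop:pareschi_popa} together with the observation that $IT_0$ implies $M$-regularity. If one preferred to avoid that observation, a fallback is the standard direct argument: use Mukai's criterion that $\mathcal{G}$ is $IT_0$ if and only if $\Phi_{\mathcal P}(\mathcal{G})$ is a locally free sheaf in degree $0$, then check the tensor product via the convolution description of $\Phi_{\mathcal P}(\mathcal{G}\otimes\mathcal{H})$. I expect citing Pareschi--Popa to suffice, so the main obstacle is just making the implication $IT_0\Rightarrow M$-regular explicit.
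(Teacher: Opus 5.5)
Your proposal is correct and follows the paper's proof: you split into nonnegative and negative twists, use preservation of $IT_0$ under ample twists (via Pareschi--Popa) for the first range, and use Serre duality with the factorization $\mathcal{F}^\vee\otimes L^{m}\cong(\mathcal{F}^\vee\otimes L)\otimes L^{m-1}$ for the second. Your remark that $IT_0$ implies $M$-regularity, so that Proposition~\ref{prop:pareschi_popa} literally applies, makes explicit a step the paper leaves implicit in its appeal to ``Pareschi and Popa's regularity results.''
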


\begin{proof}
To establish that $\mathcal{F}$ is an ACM bundle, we must demonstrate the vanishing of the intermediate cohomology groups, $H^i(A, \mathcal{F} \otimes L^t) = 0$, for all intermediate degrees $0 < i < g$ and all integers $t$. We can naturally divide this into two cases: $t \geq 0$ and $t < 0$.

\vspace{0.5em}
\noindent \textbf{Case 1: Non-negative twists ($t \geq 0$)} \\
By hypothesis, $\mathcal{F}$ is an $IT_0$ sheaf. Because the line bundle $L^t$ is trivial when $t=0$ and ample when $t \geq 1$, an application of Pareshi and Popa's regularity results implies that the tensor product $\mathcal{F} \otimes L^t$ is also an $IT_0$ sheaf. Consequently, its higher cohomology vanishes, immediately yielding $H^i(A, \mathcal{F} \otimes L^t) = 0$ for all $i > 0$.

\vspace{0.5em}
\noindent \textbf{Case 2: Negative twists ($t < 0$)} \\
We write $t = -k$ for some integer $k \geq 1$. Since $A$ is an abelian variety, its canonical bundle is trivial ($\omega_A \simeq \mathcal{O}_A$). By Serre Duality, we obtain the isomorphism:
\begin{equation}
    H^i(A, \mathcal{F} \otimes L^{-k}) \simeq H^{g-i}(A, \mathcal{F}^\vee \otimes L^k)^*.
\end{equation}
Setting $j = g - i$, the required intermediate vanishing is equivalent to showing that $H^j(A, \mathcal{F}^\vee \otimes L^k) = 0$ for all $j > 0$.

We can factor the twisted dual bundle as:
\begin{equation}
    \mathcal{F}^\vee \otimes L^k \simeq (\mathcal{F}^\vee \otimes L) \otimes L^{k-1}.
\end{equation}
By our second hypothesis, the sheaf $\mathcal{F}^\vee \otimes L$ is $IT_0$. Because $L^{k-1}$ is trivial for $k=1$ and strictly ample for $k \geq 2$, a second application of Pareshi and Popa's results ensures that the tensor product $(\mathcal{F}^\vee \otimes L) \otimes L^{k-1}$ remains an $IT_0$ sheaf. Therefore, its higher cohomology vanishes for all $j > 0$, which completes the proof.
\end{proof}

\subsection*{Does the ACM Property Imply \texorpdfstring{$M$}{M}-Regularity?}

A natural question arises: does the Arithmetically Cohen-Macaulay (ACM) condition automatically guarantee $M$-regularity? The answer is no. 
\begin{proposition}[ACM Does Not Imply $M$-Regularity] \label{prop:acm_not_mreg}
Let $(A, L)$ be a polarized abelian variety of dimension $g \ge 1$. If $\mathcal{F}$ is an Arithmetically Cohen-Macaulay (ACM) bundle with respect to $L$, it is not necessarily $M$-regular.
\end{proposition}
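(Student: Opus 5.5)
The plan is to exhibit an explicit counterexample, and the natural candidate is already at hand: a nontrivial line bundle $P\in\operatorname{Pic}^0(A)\setminus\{\mathcal O_A\}$. By Theorem~\ref{thm:main} (and, for $g=1$, trivially, since the ACM condition is then vacuous), $P$ is ACM with respect to every ample $L$. So it suffices to show that $P$ violates the codimension inequality in the definition of $M$-regularity. The failure occurs in the top degree $i=g$.

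First I compute the higher Fourier transforms of $P$. Lemma~\ref{lem:tensor-translation-FM} gives $\Phi_{\mathcal P}(P)\simeq k(-\alpha)[-g]$, where $P=P_\alpha$ and $\alpha\neq 0$. Hence the cohomology sheaves are
\[
R^i\Phi_{\mathcal P}(P)=0 \ \ (i\neq g), \qquad R^g\Phi_{\mathcal P}(P)\cong k(-\alpha).
\]
Therefore $S^i(P)=\emptyset$ for $i<g$, while $S^g(P)=\{-\alpha\}$ is a single closed point of $\widehat A$. A point has codimension exactly $g$ in $\widehat A$. The condition $\operatorname{codim} S^g(P)>g$ fails, so $P$ is not $M$-regular.

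As a cross-check independent of the transform, I would also phrase this through the cohomological support loci $V^i(P)=\{\beta: h^i(P\otimes P_\beta)\neq 0\}$. By Theorem~\ref{thm:pic0-acyclicity}, $P\otimes P_\beta$ has vanishing cohomology unless $P\otimes P_\beta\cong\mathcal O_A$. In that case $h^g=1$. Hence $V^g(P)=\{P^{-1}\}$, again of codimension $g$, not $>g$. This also illustrates Remark~\ref{rem:mreg_necessity} in reverse: the ACM property constrains only twists by powers of $L$ and the untwisted bundle, and it says nothing about twists by other elements of $\operatorname{Pic}^0(A)$ such as $P^{-1}$.

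The only delicate point is bookkeeping. One needs the sign convention ($\alpha$ versus $-\alpha$) from the normalization of the Poincaré bundle, and one needs to confirm that the definition quantifies over all $i>0$, including $i=g$. Neither affects the conclusion, because in every convention the support is a single point of codimension $g=\dim\widehat A$. Optionally, I would remark that the same argument applies to every $E_r$ of Definition~\ref{def:iter}. By Claim~\ref{claim:FM-Us}, $\Phi_{\mathcal P}(P^\vee\otimes E_r)\simeq \FF_r[-g]$, and translating back by $P$ shows that $R^g\Phi_{\mathcal P}(E_r)$ is a finite-length sheaf supported at one point. So the indecomposable ACM bundles of every rank are also not $M$-regular.
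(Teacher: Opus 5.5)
Your proof is correct, but it takes a different route from the paper's.

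\textbf{The paper's route.} It argues abstractly. It starts from an arbitrary ACM bundle $\mathcal F$ and notes that every twist $\mathcal F\otimes L^{-m}$ is still ACM. It then chooses $m\gg 0$ so that $H^0(A,\mathcal F\otimes L^{-m}\otimes P_\alpha)=0$ for all $\alpha\in\widehat A$. Finally it quotes the Pareschi--Popa theorem that $M$-regular sheaves are continuously globally generated, and concludes that this twist is not $M$-regular.

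\textbf{Your route.} You verify the failure of the defining inequality directly. Lemma~\ref{lem:tensor-translation-FM} gives $\Phi_{\mathcal P}(P)\simeq k(-\alpha)[-g]$. So $R^g\Phi_{\mathcal P}(P)$ is a nonzero sheaf supported at a point, of codimension exactly $g$. The condition at $i=g$ amounts, for any sheaf, to $R^g\Phi_{\mathcal P}(\mathcal F)=0$, and so it fails.

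\textbf{What each buys.} Your argument stays entirely inside the paper's own Fourier--Mukai computations and needs no external CGG result. It also gives the sharper fact that the paper's own examples $P$ and $E_r$ are already not $M$-regular, with no twisting at all. The paper's argument shows instead that every ACM bundle has non-$M$-regular ACM twists. However, it relies on an outside theorem. It also leaves implicit why the vanishing of $H^0$ is uniform in $\alpha$, which needs a short semicontinuity or relative Serre vanishing argument over $\widehat A$.

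Your direct method also handles the paper's examples. For $m\gg 0$ the bundle $\mathcal F\otimes L^{-m}$ satisfies $IT_g$. Hence $R^g\Phi_{\mathcal P}(\mathcal F\otimes L^{-m})$ is locally free of positive rank, its support is all of $\widehat A$, and the inequality at $i=g$ fails even more visibly.
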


\begin{proof}
The proof relies on the continuous global generation (CGG) property. By the foundational results of Pareschi and Popa, every $M$-regular coherent sheaf on an abelian variety must be continuously globally generated. See \cite[Proposition 2.13]{PareschiPopa} for proof.

Let $\mathcal{F}$ be an ACM bundle. By definition, its intermediate cohomology vanishes for all twists by the polarization $L$. Because this condition is invariant under tensoring by any power of $L$, the negatively twisted bundle $\mathcal{F}_m := \mathcal{F} \otimes L^{\otimes (-m)}$ remains strictly ACM for any integer $m > 0$. 

However, since $L$ is ample, we may choose an integer $m \gg 0$ such that $\mathcal{F}_m$ is sufficiently negative. Under this severe negative twist, the bundle loses all of its global sections, meaning $H^0(A, \mathcal{F}_m \otimes P_\alpha) = 0$ for all degree-zero line bundles $P_\alpha \in \operatorname{Pic}^0(A)$. Consequently, the continuous evaluation map for $\mathcal{F}_m$ is the zero map, making it impossible for the bundle to be CGG. Because it fails the CGG requirement, $\mathcal{F}_m$ is not $M$-regular, despite remaining a valid ACM bundle. 
\end{proof}

\appendix

\section{Fourier-Mukai Equivalence for Unipotent Bundles}

Let $A$ be an abelian variety of dimension $g$ over an algebraically closed field $k$, and let $\widehat{A}$ be its dual abelian variety. Let $\mathcal{P}$ denote the normalized Poincaré line bundle on $A \times \widehat{A}$, satisfying $\mathcal{P}|_{A \times \{\hat{0}\}} \cong \mathcal{O}_A$ and $\mathcal{P}|_{\{0\} \times \widehat{A}} \cong \mathcal{O}_{\widehat{A}}$. 

Let $p_A : A \times \widehat{A} \rightarrow A$ and $p_{\widehat{A}} : A \times \widehat{A} \rightarrow \widehat{A}$ be the canonical projections. The standard Fourier-Mukai transforms between the bounded derived categories are defined as:
\begin{align*}
    \Phi(E) &= R p_{\widehat{A},*} (p_A^* E \otimes \mathcal{P}) : D^b(A) \rightarrow D^b(\widehat{A}) \\
    \Psi(F) &= R p_{A,*} (p_{\widehat{A}}^* F \otimes \mathcal{P}) : D^b(\widehat{A}) \rightarrow D^b(A)
\end{align*}

Let $\Unip(A)$ denote the abelian category of unipotent vector bundles on $A$ (vector bundles admitting a finite filtration whose successive quotients are isomorphic to $\mathcal{O}_A$). Let $\Coh_{\{0\}}(\widehat{A})$ denote the abelian category of coherent sheaves on $\widehat{A}$ supported dimensionally at the origin $\hat{0}$.

\begin{theorem}[Fourier--Mukai Equivalence for Unipotent Bundles]
\label{thm:appendix-equivalence}
Let $A$ be an abelian variety of dimension $g$ over an algebraically closed 
field $k$, and let $\widehat{A} = \mathrm{Pic}^0(A)$ be its dual abelian variety. 
There is a chain of equivalences of abelian categories
\[
\mathrm{Unip}(A) 
\;\simeq\; 
\mathrm{Coh}_{\{0\}}(\widehat{A}) 
\;\simeq\; 
\mathrm{flmod}\!\left(\mathcal{O}_{\widehat{A},\,0}\right) 
\;\simeq\; 
\mathrm{flmod}\!\left(k[[x_1,\ldots,x_g]]\right).
\]
More precisely:
\begin{enumerate}
    \item[\rm(i)] The shifted Fourier-Mukai functors $\Phi^g := \mathcal{H}^g \circ \Phi$ and $\Psi^0 := (-1_A)^* \circ \mathcal{H}^0 \circ \Psi$ are mutually inverse exact equivalences
$$
\Phi^g \colon \operatorname{Unip}(A) \longrightarrow \operatorname{Coh}_{\{0\}}(\widehat{A}), \quad \Psi^0 \colon \operatorname{Coh}_{\{0\}}(\widehat{A}) \longrightarrow \operatorname{Unip}(A).
$$
  \item[\rm(ii)] The stalk functor $F \mapsto F_0$ is an 
    equivalence
    \[
    \mathrm{Coh}_{\{0\}}(\widehat{A}) \;\xrightarrow{\;\sim\;}\; 
    \mathrm{flmod}\!\left(\mathcal{O}_{\widehat{A},\,0}\right).
    \]
    \item[\rm(iii)] Since $\widehat{A}$ is smooth of dimension $g$, the local ring 
    $\mathcal{O}_{\widehat{A},0}$ is a regular local ring of dimension $g$, and 
    the Cohen Structure Theorem gives $\widehat{\mathcal{O}}_{\widehat{A},0} \cong 
    k[[x_1,\ldots,x_g]]$. As every finite-length module is complete, this yields
    \[
    \mathrm{flmod}\!\left(\mathcal{O}_{\widehat{A},\,0}\right) 
    \;\simeq\; 
    \mathrm{flmod}\!\left(k[[x_1,\ldots,x_g]]\right).
    \]
\end{enumerate}
\end{theorem}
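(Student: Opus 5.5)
The proof splits into three parts, one for each equivalence, and the Fourier--Mukai step (i) carries most of the weight.

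For (i), I first show that $\Phi$ sends $\operatorname{Unip}(A)$ into $\operatorname{Coh}_{\{0\}}(\widehat A)[-g]$. The argument is the induction of Claim~\ref{claim:FM-Us}, applied to an arbitrary unipotent filtration. The base case is $\Phi(\mathcal O_A)\simeq k(0)[-g]$ from Theorem~\ref{thm:mukai-standard}. For the inductive step, take the triangle coming from $0\to U_{r-1}\to U_r\to\mathcal O_A\to 0$ and read off its long exact sequence of cohomology sheaves. This shows $\Phi^g=\mathcal H^g\circ\Phi$ is exact on $\operatorname{Unip}(A)$ and that $\Phi(U)\simeq\Phi^g(U)[-g]$.

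For the converse I use Mukai's inversion formula $\Psi\circ\Phi\simeq(-1_A)^*[-g]$. Given $F\in\operatorname{Coh}_{\{0\}}(\widehat A)$, I argue by induction on length using the composition series $0\to F'\to F\to k(0)\to 0$. Here $\Psi(k(0))=\mathcal P|_{A\times\{0\}}=\mathcal O_A$ sits in degree $0$. The same triangle argument shows $\Psi(F)$ is a sheaf in degree $0$ and is an extension of $\Psi(F')$ by $\mathcal O_A$, so $\Psi(F)$ is unipotent. Inverse-closedness of $\operatorname{Unip}(A)$ under $(-1_A)^*$ is immediate because $(-1_A)^*\mathcal O_A=\mathcal O_A$.

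Combining the two directions: for unipotent $U$, $\Psi(\Phi^g(U))\simeq\Psi(\Phi(U))[g]\simeq(-1_A)^*U$, so $\Psi^0\circ\Phi^g\simeq\mathrm{id}$. The other composite is identified the same way using $\Phi\circ\Psi\simeq(-1_{\widehat A})^*[-g]$. Here I must check the sign: $(-1_{\widehat A})^*$ preserves $\operatorname{Coh}_{\{0\}}$, and the twist by it can be absorbed, so the functors are quasi-inverse up to a natural autoequivalence. If needed, I would adjust the definition by this involution. Exactness of both functors follows because triangles among objects concentrated in a single degree are short exact sequences.

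For (ii), a coherent sheaf supported at the closed point $0$ is a skyscraper, so it equals $i_{0*}$ of its stalk. The global sections functor then identifies it with its stalk. That stalk is a finitely generated $\mathcal O_{\widehat A,0}$-module killed by a power of $\mathfrak m$, hence of finite length. Conversely, pushforward of a finite-length module gives such a sheaf, so stalk and pushforward are inverse functors.

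For (iii), a finite-length module $M$ over $R=\mathcal O_{\widehat A,0}$ is annihilated by some $\mathfrak m^n$. Hence it is a module over $R/\mathfrak m^n\cong\widehat R/\widehat{\mathfrak m}^n$, and $\widehat R\cong k[[x_1,\dots,x_g]]$ by Cohen's theorem since $R$ is regular, $k$ is its residue field, and $k$ is algebraically closed. Morphisms agree because both categories are the colimit of the categories $\mathrm{mod}(R/\mathfrak m^n)$. This gives the third equivalence.

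I expect the main obstacle to be the precise bookkeeping of shifts and of the $(-1)^*$ twists in Mukai's inversion theorem, so that $\Phi^g$ and $\Psi^0$ are exactly mutually inverse as stated, rather than inverse only up to an involution.
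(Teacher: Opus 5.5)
Your proposal is correct. For (ii) and (iii) it coincides with the paper's argument. The paper phrases (iii) as base change along $R\to\widehat R$, using that finite-length modules are complete; this is the same fact as your $R/\mathfrak m^n\cong\widehat R/\widehat{\mathfrak m}^n$.

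The genuine difference is in (i). The paper gives no argument there and simply cites Mukai for the claim that $\Phi^g$ and $\Psi^0$ are inverse equivalences. You derive it from three inputs: $\Phi(\mathcal O_A)\simeq k(0)[-g]$, $\Psi(k(0))\simeq\mathcal O_A$, and Mukai's inversion theorem. The argument is d\'evissage along unipotent filtrations on one side and composition series on the other, i.e.\ the induction of Claim~\ref{claim:FM-Us} run in both directions.

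This route buys several things. It makes the proof self-contained. It pins down the correct target category: sheaves set-theoretically supported at $0$. The paper's proof says ``scheme-theoretically'', which would only give $k(0)^{\oplus n}$, i.e.\ trivial bundles. It also shows, via exactness of $\Psi^0$ into $\Coh(A)$, that kernels and cokernels in $\Coh(A)$ of maps between unipotent bundles are again unipotent. That is what justifies treating $\Unip(A)$ as an abelian subcategory, which the paper takes for granted.

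The sign issue you flag resolves exactly, with no redefinition needed. Since $(-1_A\times -1_{\widehat A})^*\mathcal P\simeq\mathcal P$, one has $\Phi\circ(-1_A)^*\simeq(-1_{\widehat A})^*\circ\Phi$. Hence $\Phi(\Psi^0F)\simeq(-1_{\widehat A})^*(-1_{\widehat A})^*F[-g]\simeq F[-g]$, so $\Phi^g\circ\Psi^0\simeq\mathrm{id}$. Alternatively, $\Phi^g$ is fully faithful, because $\Phi$ is and $\Phi(U)\simeq\Phi^g(U)[-g]$ naturally. It is essentially surjective by your $\Psi$-induction. So $\Psi^0\circ\Phi^g\simeq\mathrm{id}$ already forces $\Psi^0$ to be its quasi-inverse.
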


\begin{proof}
The equivalence between the category of unipotent vector bundles $\mathcal{U}$ on $A$ and the category of coherent sheaves supported scheme-theoretically at the origin of $\widehat{A}$, denoted $\mathrm{Coh}_{\{0\}}(\widehat{A})$, is a classical consequence of the Fourier-Mukai equivalence. By the foundational results of Mukai \cite{MukaiSemiHomogeneous, MukaiDuality}, the shifted functors $\Phi^g$ and $\Psi^0$ provide mutually inverse exact equivalences between these two categories.

To establish the remaining equivalences, we pass to local algebra. For any coherent sheaf supported entirely at the closed point $0_{\widehat{A}}$, its global module structure is entirely determined by its stalk. The stalk functor $F \mapsto F_0$ therefore yields an exact equivalence:
\[
\mathrm{Coh}_{\{0\}}(\widehat{A}) \simeq \mathrm{flmod}(\mathcal{O}_{\widehat{A},0}).
\]

Finally, since the dual abelian variety $\widehat{A}$ is smooth of dimension $g$ over an algebraically closed field, the local ring $R := \mathcal{O}_{\widehat{A},0}$ is a regular local ring of dimension $g$. By the Cohen Structure Theorem, its completion at the maximal ideal $\mathfrak{m}_0$ is isomorphic to the formal power series ring:
\[
\hat{R} \cong k[[x_1, \dots, x_g]].
\]
Because every finite-length module $M$ over a Noetherian local ring is annihilated by some power of the maximal ideal, it is naturally complete, meaning $M \otimes_R \hat{R} \cong M$. Consequently, the base change functor induces a canonical exact equivalence:
\[
\mathrm{flmod}(\mathcal{O}_{\widehat{A},0}) \simeq \mathrm{flmod}(k[[x_1, \dots, x_g]]).
\]
Composing these functors yields the stated chain of equivalences.
\end{proof}

\begin{theorem}
\label{thm:flmod_equivalence}
Let $k$ be a field. The category $\operatorname{flmod}(k[[x_1, x_2]])$ of finite-length modules over the formal power series ring $k[[x_1, x_2]]$ is equivalent to the category of finite-dimensional $k$-vector spaces equipped with a pair of commuting nilpotent endomorphisms.
\end{theorem}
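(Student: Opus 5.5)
The plan is to write down explicit mutually quasi-inverse functors between $\operatorname{flmod}(k[[x_1,x_2]])$ and the category $\mathcal{N}_2$ whose objects are triples $(V,X,Y)$ with $V$ a finite-dimensional $k$-vector space and $X,Y\in\operatorname{End}_k(V)$ commuting nilpotent endomorphisms. A morphism $(V,X,Y)\to(V',X',Y')$ is a linear map intertwining both operators. Throughout write $S:=k[[x_1,x_2]]$ and $\mathfrak m=(x_1,x_2)$.

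\textbf{From modules to triples.} Let $M$ be a finite-length $S$-module. By the standard fact already used in the proof of Theorem~\ref{thm:appendix-equivalence}, $M$ is annihilated by $\mathfrak m^N$ for some $N$. Since $S/\mathfrak m=k$, the composition factors of $M$ are copies of $k$, so $\dim_k M=\ell(M)<\infty$. Define $F(M)=(M,X,Y)$, where $X,Y$ are multiplication by $x_1,x_2$.
\begin{itemize}
\item These operators commute because $S$ is commutative.
\item They are nilpotent because $x_i^N\in\mathfrak m^N$ annihilates $M$.
\item An $S$-linear map is in particular $k$-linear and commutes with $x_1,x_2$, so $F$ is a functor.
\end{itemize}

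\textbf{From triples to modules.} Let $(V,X,Y)$ be in $\mathcal N_2$. Commutativity makes $V$ a module over $k[x_1,x_2]$. Since $X^a=Y^b=0$ for some $a,b$, the ideal $(x_1,x_2)^{a+b}$ acts as zero. For $f\in S$, let $f_{<a+b}$ denote its truncation below degree $a+b$, and set $f\cdot v:=f_{<a+b}(X,Y)v$. This is well defined and an $S$-action because
\[
S/\mathfrak m^{a+b}\cong k[x_1,x_2]/(x_1,x_2)^{a+b}
\]
as rings. The resulting module has finite length, bounded by $\dim V$, since every $S$-submodule is a $k$-subspace. Call it $G(V,X,Y)$. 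A linear map intertwining $X,Y$ intertwines every polynomial in them, hence the $S$-action, so $G$ is a functor.

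\textbf{Inverse equivalences.} One checks that $F\circ G$ and $G\circ F$ are the identity on objects. For $F\circ G$ this is immediate. For $G\circ F$, note that for $v\in M$ the truncated and untruncated power series act identically, since $f-f_{<N}\in\mathfrak m^N$ kills $M$. On morphisms $F$ is bijective, which gives full faithfulness. In fact $F$ is an isomorphism of categories, and it is $k$-linear and exact.

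\textbf{Main obstacle.} The only point requiring care is the well-definedness and independence of the truncation degree in the definition of $G$. This is handled by the ring isomorphism $S/\mathfrak m^{n}\cong k[x_1,x_2]/(x_1,x_2)^{n}$, which is compatible with enlarging $n$. The same argument goes through verbatim for any field $k$, as the statement requires.
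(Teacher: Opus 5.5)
Your proposal is correct and follows essentially the same route as the paper: both define the forward functor by taking $X,Y$ to be multiplication by $x_1,x_2$, and the reverse functor by extending the polynomial action to $k[[x_1,x_2]]$ using nilpotency, then observe that the two constructions are mutually inverse. Your truncation argument via $k[[x_1,x_2]]/\mathfrak m^{n}\cong k[x_1,x_2]/(x_1,x_2)^{n}$, together with the check that $f-f_{<N}\in\mathfrak m^N$ kills $M$, makes precise two points the paper only asserts: that the extended action is multiplicative, and that $G\circ F$ recovers the original module structure.
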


\begin{proof}
Let $\mathcal{C}$ denote the category whose objects are tuples $(V, X, Y)$, where $V$ is a finite-dimensional $k$-vector space and $X, Y \in \operatorname{End}_k(V)$ satisfy $XY = YX$ and $X^N = Y^N = 0$ for some integer $N \ge 1$. A morphism $f \colon (V, X, Y) \to (V', X', Y')$ in $\mathcal{C}$ is a $k$-linear map $f \colon V \to V'$ such that $f \circ X = X' \circ f$ and $f \circ Y = Y' \circ f$. We construct mutually inverse exact functors between $\operatorname{flmod}(k[[x_1, x_2]])$ and $\mathcal{C}$.

\textbf{Forward Direction:} Let $M \in \operatorname{flmod}(k[[x_1, x_2]])$. Because $M$ has finite length and $k[[x_1, x_2]]$ is a $k$-algebra, $M$ is naturally a finite-dimensional $k$-vector space. Let $X, Y \in \operatorname{End}_k(M)$ be the linear transformations defined by scalar multiplication by $x_1$ and $x_2$, respectively. Since the base ring is commutative, $XY = YX$. Furthermore, $k[[x_1, x_2]]$ is a Noetherian local ring with a unique maximal ideal $\mathfrak{m} = \langle x_1, x_2 \rangle$. Because $M$ has finite length, it is annihilated by some power of the maximal ideal; that is, there exists an integer $N \ge 1$ such that $\mathfrak{m}^N M = 0$. Since $x_1, x_2 \in \mathfrak{m}$, this forces $X^N = 0$ and $Y^N = 0$. Any module homomorphism over $k[[x_1, x_2]]$ naturally commutes with multiplication by $x_1$ and $x_2$, establishing a well-defined functor into $\mathcal{C}$.

\textbf{Reverse Direction:} Conversely, let $(V, X, Y) \in \mathcal{C}$. The commuting operators $X$ and $Y$ endow $V$ with the structure of a $k[x_1, x_2]$-module via polynomial evaluation: $p(x_1, x_2) \cdot v = p(X, Y)(v)$. To extend this action to the completion $k[[x_1, x_2]]$, consider a formal power series $s(x_1, x_2) = \sum_{i,j=0}^{\infty} c_{i,j} x_1^i x_2^j$. Because $X$ and $Y$ are nilpotent and commute, any term with $i \ge N$ or $j \ge N$ acts as the zero operator on $V$. Consequently, the infinite series evaluates to a finite sum in $\operatorname{End}_k(V)$, defining a unique and well-defined $k[[x_1, x_2]]$-module structure on $V$. Since $V$ is finite-dimensional over $k$, it inherently possesses finite length as a module. Any morphism in $\mathcal{C}$ preserves this action by definition, yielding a functor into $\operatorname{flmod}(k[[x_1, x_2]])$.

These two constructions preserve exactness and are mutually inverse by definition, establishing the equivalence of the categories.
\end{proof}

\end{document}